\newcommand{\FK}[1]{{\color[rgb]{0,0,0}#1}}
\newcommand{\FFK}[1]{{\color[rgb]{0,0,0}#1}}
\newcommand{\MG}[1]{{\color[rgb]{0,0,0}#1}}
\newcommand{\MMG}[1]{{\color[rgb]{0,0,0}#1}}
\newcommand{\JS}[1]{{\color[rgb]{0,0,0}#1}}
\newcommand{\JJS}[1]{{\color[rgb]{0,0,0}#1}}
\newcommand{\JJJS}[1]{{\color[rgb]{0,0,0}#1}}
\newcommand{\R}{ \mathbb{R} }
\newcommand{\C}{ \mathbb{C} }
\newcommand{\cal}{ \mathcal }
\newtheorem{theorem}{Theorem}
\newtheorem{corollary}{Corollary}
\newtheorem{lemma}{Lemma}
\title{\MG{ParaOpt:} a parareal algorithm for optimality systems}
\author{Martin J. Gander}
\address{Section  of  Mathematics,  University  of  Geneva,  1211  Geneva  4,  Switzerland} 
\email{martin.gander@unige.ch}
\author{Felix Kwok}
\address{Department of Mathematics, Hong Kong Baptist University, Hong-Kong}
\email{felix\_kwok@hkbu.edu.hk}
\author{Julien Salomon}
\address{INRIA  Paris,  ANGE  Project-Team,  75589  Paris  Cedex  12,  France  and Sorbonne Universit\'e, CNRS, Laboratoire Jacques-Louis Lions, 75005 Paris, France} \email{julien.salomon@inria.fr}
\begin{document}

\maketitle

\begin{abstract}
  The time parallel solution of optimality systems arising in PDE
  \FFK{constrained} optimization could be achieved by simply applying any
  time parallel algorithm\MG{, such as Parareal,} to solve
  the forward and backward evolution problems arising in the
  optimization loop. We propose here a different strategy by devising
  directly a new time parallel algorithm\MG{, which we call ParaOpt,} for the
  coupled forward and backward nonlinear partial differential
  equations. \MG{ParaOpt} is inspired by the Parareal algorithm for
  evolution equations, and thus is automatically a two-level
  method. We provide a detailed convergence analysis for the case of
  linear parabolic PDE constraints. We illustrate the performance of
  \MG{ParaOpt} with numerical experiments both for linear and
  nonlinear optimality systems.
\end{abstract}



\section{Introduction}

Time parallel time integration has become an active research area over
the last decade; there is even an annual workshop now dedicated to
this topic called the P\MG{in}T (Parallel \FK{in} Time) workshop, which started
with the first such dedicated workshop at the USI in Lugano in June
2011.  The main reason for this interest is the advent of massively
parallel computers \cite{dongarra2011international} 
  with
so many computing cores that \MG{spatial parallelization of an
  evolution problem} saturates long before all cores have been
effectively used.  There are four classes of such algorithms: methods
based on multiple shooting leading to the parareal algorithm
\cite{Nievergelt:1964:PMI,Bellen:1989:PAI, kiehl1994parallel,
  lions2001resolution, gander2007analysis, gander:2008:nca,
  gander2008analysis, minion2010hybrid}, methods based on waveform
relaxation \cite{Lelarasmee:1982:WRM, gander1996overlapping,
  Gander:1998:STC, Giladi:1997:STD, Gander:2004:ABC, Gander:2007:OSW,
  Kwok:2014, Mandal:2014, al2014optimization, Gander:2016:ETNA},
methods based on multigrid
\cite{Hackbusch:1984:PMG,Lubich:1987:MGD,vandewalle1994space,
  horton1995space, emmett2012toward, Gander:2016:AON,
  falgout2014parallel, minion2015interweaving, dobrev2017two}, and
direct time parallel methods
\cite{Miranker:1967:PMF,sheen2003parallel, thomee2005high,
  maday2008parallelization, gander2013paraexp}; for a review of the
development of P\MG{in}T methods, see \cite{gander201550},\FFK{\cite{ongapplications}} and the references therein.

A natural area where \FK{this} type of parallelization could be used
effectively is \FK{in} PDE constrain\FK{ed} optimization \JS{on bounded time intervals}, when the constraint is a
time dependent PDE. \FK{In these problems, calculating the descent direction within the optimization
loop requires solving both a forward and a backward evolution problem, so one}
could directly apply time parallelization techniques to each of these
solves \FFK{\cite{gotschel2017parallel,gotschel2019efficient,gunther2018non,gunther2019non}}. 
\FFK{Parareal can also be useful in one-shot methods where the preconditioning operator
requires the solution of initial value problems, see e.g.~\cite{ulbrich2015preconditioners}.}
\JS{Another method\FK{, which}
has been proposed in~\cite{MST,RSSG} in the \FK{context} of quantum control,
\FK{consists of decomposing the time interval into sub-intervals and
defining intermediate states at sub-interval boundaries; this allows one to construct }
a set of independent optimization problems associated with \FK{each sub-interval in time.}
\FK{Each iteration of the method then requires the solution of these independent sub-problems in
parallel, followed by} a cheap update of the intermediate states.
}
\FK{In this paper, we} propose \FK{yet another} \JJJS{approach based on a
fundamental understanding of the parareal algorithm invented in
\cite{lions2001resolution} as a specific approximation of a multiple
shooting method \cite{gander2007analysis}. We construct} a new time\FK{-}parallel method \MG{called ParaOpt} for solving directly the coupled
forward and backward evolution problems arising in the optimal control
context. \FFK{
Our approach is related to the multiple shooting paradigm \cite{10.1145/355580.369128}, where the
time horizon is decomposed into non-overlapping sub-intervals, and we solve for the unknown interface
state and adjoint variables using an inexact Newton method so that the trajectories are continuous
across sub-intervals. Additionally, a parareal-like approximation is used to obtain a cheap approximate
Jacobian for the Newton solve. There are two potential benefits to our approach: firstly, it is known that
for some control problems, long time horizons lead to difficulties in convergence for the optimization
loop. Therefore, a multiple shooting approach allows us to deal with local subproblems on shorter time
horizons, where we obtain faster convergence. Such convergence enhancement has also been observed in \cite{BOCK19841603,MADAY2002387,ISI:000182496700006}, and also more recently in 
\cite{RSSG}. Secondly, if we use parareal to parallelize the forward and backward sweeps, then the
speedup ratio will be bounded above by $L/K$, where $L$ is the number of sub-intervals and $K$ is the number of parareal iterations required for convergence. For many problems, especially the non-diffusive ones like the Lotka-Volterra problem we consider in Section \ref{lotka_voltera_section}, this ratio does not go above 4--5; this limits the potential speedup that can be obtained from this classical approach. By decomposing the
control problem directly and conserving the globally coupled structure of the problem, we 
obtain higher speedup ratios, closer to ones that are achievable for two-level methods for elliptic problems.
}

Our paper is organized as follows: in Section \ref{Sec2}, we present
our PDE constrain\FK{ed} optimization model problem, and \MG{ParaOpt} for
its solution. In Section \ref{Sec3} we give a complete convergence
analysis of \MG{ParaOpt} for the case when the PDE constraint is
linear and of parabolic type.  We then illustrate the performance of
\MG{ParaOpt} by numerical experiments in Section \ref{Sec4}, both for
linear and nonlinear problems. We present our conclusions and an
outlook on future work in Section \ref{Sec5}.

\section{\MG{ParaOpt}: \FK{a two-grid method} for optimal control}\label{Sec2}

Consider the optimal control problem associated with the cost
functional
$$ J(c)= \frac 12\|y(T)-y_{target}\|^2+\frac\alpha 2 \int_0^T \|c(t)\|^2 dt,$$
where $\alpha>0$ is a fixed regularization parameter, $y_{target}$ is a target state, and {the evolution of} the state function $y{:[0,T]\to \R^n}$ is described by the {non-}linear equation
\begin{equation}\label{eq:dynamic}
\dot{y}(t)=f(y(t))+c(t),
\end{equation}
with initial condition $y(0)=y_{init}$, \FFK{where $c(t)$ is the control, which is}
{assumed to enter linearly in the forcing term.} The \FFK{first-order} optimality condition then reads
\begin{equation}\label{BVP}
  \dot{y}=f(y)-\frac{\lambda}{\alpha},\quad \dot{\lambda}=-(f'(y))^T\lambda,
\end{equation}
\MG{with the} final condition $\lambda(T)=y(T)-y_{target}$, see \cite{gander2014constrained} for a detailed derivation.

We now introduce a parallelization algorithm 
{for solving the coupled problem (\ref{eq:dynamic}--\ref{BVP}).}
The approach we propose follows the ideas of the
parareal algorithm, {combining a sequential coarse integration on $[0,T]$ and parallel fine
integration on subintervals.}

Consider a subdivision of
$[0,T]=\cup_{\ell=0}^{L-1}[T_{\ell},T_{\ell+1}]$ and two sets of
intermediate states $(\MG{Y}_\ell)_{\ell=0,\cdots,L}$ and
$(\MG{\Lambda}_\ell)_{\ell=1,\cdots,L}$ corresponding to approximations of
the state $y$ and the adjoint state $\lambda$ at times
$T_{0},\cdots,T_L$ and $T_{1},\cdots,T_L$ respectively.
%
%
%
We denote \FFK{by $P$ and $Q$} the nonlinear solution
operators for the boundary value problem (\ref{BVP}) on the subinterval
$[T_\ell,T_{\ell+1}]$ with initial condition $y(T_l)=\MG{Y}_l$ and final
condition $\lambda(T_{\ell+1})=\MG{\Lambda}_{\ell+1}$, \FFK{defined so that
$P$ propagates the state $y$ forward to $T_{\ell+1}$ and $Q$ propagates the
adjoint backward to $T_\ell$:}
\begin{equation}\label{PQdef}
  \left(\begin{array}{c}
  y(T_{\ell+1})\\
  \lambda(T_\ell)
  \end{array}\right)=
  \left(\begin{array}{c}
  P(\MG{Y}_\ell,\MG{\Lambda}_{\ell+1})\\
  Q(\MG{Y}_\ell,\MG{\Lambda}_{\ell+1})
  \end{array}\right).
\end{equation}
Using these solution operators, we can write the boundary value
problem as a system of subproblems, which have to satisfy the matching
conditions
\begin{equation}
  \begin{array}{rclrcl}
    \MG{Y}_0-y_{init}& = & 0, \\
    \MG{Y}_1-P(\MG{Y}_0,\MG{\Lambda}_1)&=& 0, \qquad& \MG{\Lambda}_1-Q(\MG{Y}_1,\MG{\Lambda}_2)&=&0,\\
    \MG{Y}_2-P(\MG{Y}_1,\MG{\Lambda}_2)&=& 0, & \MG{\Lambda}_2-Q(\MG{Y}_2,\MG{\Lambda}_3)&=&0,\\
                       &\vdots& &  &\vdots& \\
    \MG{Y}_L-P(\MG{Y}_{L-1},\MG{\Lambda}_L)&=& 0, & \MG{\Lambda}_L-\MG{Y}_L+y_{target}&=&0.
  \end{array}
\end{equation}
This nonlinear system of equations can be solved using Newton's
method. Collecting the unknowns in the vector
$(Y^T,\Lambda^T):=(\MG{Y}_0^T,\MG{Y}_1^T,\ldots,\MG{Y}_L^T,\MG{\Lambda}_1^T,\MG{\Lambda}_2^T,\ldots,\MG{\Lambda}_L^T)$,
we obtain the nonlinear system
$$
  {\cal F}\left(\begin{array}{c}
    Y\\
    \Lambda\\
  \end{array}\right):=\left(\begin{array}{c}
    \MG{Y}_0-y_{init}\\
    \MG{Y}_1-P(\MG{Y}_0,\MG{\Lambda}_1)\\
    \MG{Y}_2-P(\MG{Y}_1,\MG{\Lambda}_2)\\
   \vdots\\
   \MG{Y}_L-P(\MG{Y}_{L-1},\MG{\Lambda}_L)\\
   \MG{\Lambda}_1-Q(\MG{Y}_1,\MG{\Lambda}_2)\\
   \MG{\Lambda}_2-Q(\MG{Y}_2,\MG{\Lambda}_3)\\
   \vdots\\
   \MG{\Lambda}_L-\MG{Y}_L+y_{target}
  \end{array}\right)=0.
$$
Using Newton's method to solve this system gives the iteration
\begin{equation}\label{eq:Newton}
  {\cal F}'\left(\begin{array}{c}
    Y^{n}\\
    \Lambda^{n}\\
  \end{array}\right)\left(\begin{array}{c}
    Y^{n+1}-Y^n\\
    \Lambda^{n+1}-\Lambda^n\\
  \end{array}\right)=-{\cal F}\left(\begin{array}{c}
    Y^n\\
    \Lambda^n\\
  \end{array}\right),
\end{equation}
where the Jacobian matrix of ${\cal F}$ is given by
\begin{multline}\label{Jacobian}
  \arraycolsep0em
  {\cal F}'
  \left(\begin{array}{c}
    Y\\
    \Lambda\\
  \end{array}\right)=\\
  \footnotesize \arraycolsep1pt
  \left(\begin{array}{ccccc|ccccc}
     I &   &   &   &  &  & & & & \\
     -P_y(\MG{Y}_0,\MG{\Lambda}_1) & I &   &   &  & -P_\lambda(\MG{Y}_0,\MG{\Lambda}_1) & & & \\
          & \ddots & \ddots & & & & \ddots & & & \\
          &        & -P_y(\MG{Y}_{L-1},\MG{\Lambda}_L)\ & I & & & & &
     -P_\lambda(\MG{Y}_{L-1},\MG{\Lambda}_L)\\\hline
      & -Q_y(\MG{Y}_1,\MG{\Lambda}_2) & &  & & I\quad & -Q_\lambda(\MG{Y}_1,\MG{\Lambda}_2) & & &  \\
      & \hfill\ddots & & & & & \ddots\hfill &\ddots &  \\
      & &  -Q_y(\MG{Y}_{L-1},\MG{\Lambda}_L)& & &  & &I\quad & -Q_\lambda(\MG{Y}_{L-1},\MG{\Lambda}_L) \\
      & & & -I & &  & & & I \\
  \end{array}\right).
\end{multline}
Using the explicit expression for the Jacobian gives us the
componentwise linear system we have to solve at each Newton iteration:
\begin{equation}\label{NewtonComponentwise}
  \arraycolsep0em
  \begin{array}{rcl}
  \MG{Y}_0^{n+1} & = & y_{init}, \\
  \MG{Y}_1^{n+1} & = & -P(\MG{Y}_0^n,\MG{\Lambda}_1^n)+P_y(\MG{Y}_0^n,\MG{\Lambda}_1^n)(\MG{Y}_0^{n+1}-\MG{Y}_0^n)+
  P_\lambda(\MG{Y}_0^n,\MG{\Lambda}_1^n)(\MG{\Lambda}_1^{n+1}-\MG{\Lambda}_1^n),\\
  \MG{Y}_2^{n+1} & = & -P(\MG{Y}_1^n,\MG{\Lambda}_2^n)+P_y(\MG{Y}_1^n,\MG{\Lambda}_2^n)(\MG{Y}_1^{n+1}-\MG{Y}_1^n)+
  P_\lambda(\MG{Y}_1^n,\MG{\Lambda}_2^n)(\MG{\Lambda}_2^{n+1}-\MG{\Lambda}_2^n),\\
  & \vdots & \\
  \MG{Y}_L^{n+1} & = & -P(\MG{Y}_{L-1}^n,\MG{\Lambda}_L^n)+P_y(\MG{Y}_{L-1}^n,\MG{\Lambda}_L^n)(\MG{Y}_{L-1}^{n+1}-\MG{Y}_{L-1}^n)+
  P_\lambda(\MG{Y}_{L-1}^n,\MG{\Lambda}_L^n)(\MG{\Lambda}_L^{n+1}-\MG{\Lambda}_L^n),\\
  \MG{\Lambda}_1^{n+1} & = & Q(\MG{Y}_1^n,\MG{\Lambda}_2^n)+Q_\lambda(\MG{Y}_1^n,\MG{\Lambda}_2^n)(\MG{\Lambda}_2^{n+1}-\MG{\Lambda}_2^n)+Q_y(\MG{Y}_1^n,\MG{\Lambda}_2^n)(\MG{Y}_1^{n+1}-\MG{Y}_1^n),\\
  \MG{\Lambda}_2^{n+1} & = & Q(\MG{Y}_2^n,\MG{\Lambda}_3^n)+Q_\lambda(\MG{Y}_2^n,\MG{\Lambda}_3^n)(\MG{\Lambda}_3^{n+1}-\MG{\Lambda}_3^n)+Q_y(\MG{Y}_2^n,\MG{\Lambda}_3^n)(\MG{Y}_2^{n+1}-\MG{Y}_2^n),\\
  & \vdots & \\
  \MG{\Lambda}_{L-1}^{n+1} & = & Q(\MG{Y}_{L-1}^n,\MG{\Lambda}_L^n)+Q_\lambda(\MG{Y}_{L-1}^n,\MG{\Lambda}_L^n)(\MG{\Lambda}_L^{n+1}-\MG{\Lambda}_L^n)+Q_y(\MG{Y}_{L-1}^n,\MG{\Lambda}_L^n)(\MG{Y}_{L-1}^{n+1}-\MG{Y}_{L-1}^n),\\
  \MG{\Lambda}_L^{n+1}&=&\MG{Y}_L^{n+1}-y_{target}.
  \end{array}
\end{equation}
Note that this system is not triangular: the
$\MG{Y}_{\ell}^{n+1}$ are coupled to the $\MG{\Lambda}_{\ell}^{n+1}$ and vice
versa, which is clearly visible in the Jacobian in (\ref{Jacobian}).
\FK{This is in contrast to the initial value problem case, where the application
of multiple shooting leads to a block lower triangular system. }

The parareal approximation idea is to replace the derivative term by
a difference computed on a coarse grid in (\ref{NewtonComponentwise}),
\JS{i.e., to use the approximations}
\begin{equation}\label{stdParareal}
  \begin{array}{rcl}
  P_y(\MG{Y}_{\ell-1}^n,\MG{\Lambda}_\ell^n)(\MG{Y}_{\ell-1}^{n+1}-\MG{Y}_{\ell-1}^n)&
  \approx & P^G(\MG{Y}_{\ell-1}^{n+1},\MG{\Lambda}_\ell^n)-P^G(\MG{Y}_{\ell-1}^{n},\MG{\Lambda}_\ell^n),\\
  P_\lambda(\MG{Y}_{\ell-1}^n,\MG{\Lambda}_\ell^n)(\MG{\Lambda}_\ell^{n+1}-\MG{\Lambda}_\ell^n)&
  \approx & P^G(\MG{Y}_{\ell-1}^n,\MG{\Lambda}_\ell^{n+1})-P^G(\MG{Y}_{\ell-1}^{n},\MG{\Lambda}_\ell^n),\\
  Q_\lambda(\MG{Y}_{\ell-1}^n,\MG{\Lambda}_\ell^n)(\MG{\Lambda}_\ell^{n+1}-\MG{\Lambda}_\ell^n)&
  \approx & Q^G(\MG{Y}_{\ell-1}^n,\MG{\Lambda}_\ell^{n+1})-Q^G(\MG{Y}_{\ell-1}^{n},\MG{\Lambda}_\ell^n),\\
  Q_y(\MG{Y}_{\ell-1}^n,\MG{\Lambda}_\ell^n)(\MG{Y}_{\ell-1}^{n+1}-\MG{Y}_{\ell-1}^n)&
  \approx & Q^G(\MG{Y}_{\ell-1}^{n+1},\MG{\Lambda}_\ell^n)-Q^G(\MG{Y}_{\ell-1}^{n},\MG{\Lambda}_\ell^n),
  \end{array}
\end{equation}
{where $P^G$ and $Q^G$ are propagators obtained from a coarse discretization of the
subinterval problem \eqref{PQdef}, e.g., by using only one time step for the whole
subinterval.}
This is certainly cheaper than \FK{evaluating} the derivative on the fine
grid\FK{;} the remaining expensive fine grid operations
$P(\MG{Y}_{\ell-1}^n,\MG{\Lambda}_\ell^n)$ and $Q(\MG{Y}_{\ell-1}^n,\MG{\Lambda}_\ell^n)$
in (\ref{NewtonComponentwise}) can now all be performed in
parallel.
\FFK{However, since (\ref{NewtonComponentwise}) does not have a block triangular structure,
the resulting nonlinear system would need to be solved iteratively\JJJS{. Each }of these outer iterations is now
very expensive, since one must evaluate the propagators  $P^G({Y}_{\ell-1}^{n+1},{\Lambda}_\ell^n)$, etc., by solving a \emph{coupled nonlinear} local control problem. This is in contrast to initial value problems, where the additional cost of solving nonlinear local problems is justified, because the block lower triangular structure allows one to solve the outer problem by forward substitution, without the need to iterate.} 
%
%
\FFK{In order to reduce the cost of computing outer residuals, our idea is} not to use the parareal approximation \FFK{\eqref{stdParareal}}, but to
use the so\FK{-}called ``derivative parareal'' variant, where we approximate
the derivative by effectively computing it for a coarse problem, see
\cite{GanderHairer2014},
\begin{equation}\label{derivParareal}
  \begin{array}{rcl}
  P_y(\MG{Y}_{\ell-1}^n,\MG{\Lambda}_\ell^n)(\MG{Y}_{\ell-1}^{n+1}-\MG{Y}_{\ell-1}^n)&
  \approx & P_y^G(\MG{Y}_{\ell-1}^n,\MG{\Lambda}_\ell^n)(\MG{Y}_{\ell-1}^{n+1}-\MG{Y}_{\ell-1}^n),\\
  P_\lambda(\MG{Y}_{\ell-1}^n,\MG{\Lambda}_\ell^n)(\MG{\Lambda}_\ell^{n+1}-\MG{\Lambda}_\ell^n)&
  \approx & P_\lambda^G(\MG{Y}_{\ell-1}^n,\MG{\Lambda}_\ell^n)(\MG{\Lambda}_\ell^{n+1}-\MG{\Lambda}_\ell^n),\\
  Q_\lambda(\MG{Y}_{\ell-1}^n,\MG{\Lambda}_\ell^n)(\MG{\Lambda}_\ell^{n+1}-\MG{\Lambda}_\ell^n)&
  \approx & Q_\lambda^G(\MG{Y}_{\ell-1}^n,\MG{\Lambda}_\ell^n)(\MG{\Lambda}_\ell^{n+1}-\MG{\Lambda}_\ell^n),\\
  Q_y(\MG{Y}_{\ell-1}^n,\MG{\Lambda}_\ell^n)(\MG{Y}_{\ell-1}^{n+1}-\MG{Y}_{\ell-1}^n)&
  \approx & Q_y^G(\MG{Y}_{\ell-1}^n,\MG{\Lambda}_\ell^n)(\MG{Y}_{\ell-1}^{n+1}-\MG{Y}_{\ell-1}^n).
  \end{array}
\end{equation}

{The advantage of this approximation is that the computation of $P_y^G$, $P_\lambda^G$, etc.~only
involves linear problems. Indeed, \FFK{for a small perturbation $\delta y$ in $Y_{\ell-1}$,} the quantities $P_y^G(\MG{Y}_{\ell-1},\MG{\Lambda}_{\ell})\delta y$ and $Q_y^G(\FFK{{Y}_{\ell-1}},\FFK{{\Lambda}_{\ell}})\delta y$ can be computed by discretizing
and solving the coupled differential equations obtained by differentiating \eqref{BVP}. If $(y,\lambda)$ is the solution of (\ref{BVP}) with $y(T_{\ell-1}) = \MG{Y}_{\FFK{\ell-1}}$
and $\lambda(T_{\FFK{\ell}}) = \MG{\Lambda}_{\FFK{\ell}}$, then solving the \emph{linear} derivative system
\begin{align}\label{dBVP}
&\dot z = f'(y)z + \mu/\alpha, & &\dot \mu = -f'(y)^T\mu - H(y,\FK{z})^T\FK{\lambda},\\
&z(T_{\FFK{\ell-1}})= \delta y, & &\mu(T_{\FFK{\ell}}) = 0 \nonumber
\end{align}
\FK{on} a coarse \FK{time grid} leads to
$$z(T_{\FFK{\ell}}) = P_y^G(\MG{Y}_{\FFK{\ell-1}},\MG{\Lambda}_{\FFK{\ell}})\delta y, \qquad \mu(T_{\FFK{\ell-1}}) = Q_y^G(\MG{Y}_{\FFK{\ell-1}},\MG{\Lambda}_{\FFK{\ell}})\delta y, $$
where $H(y,z) = \lim_{r\to 0}\frac{1}{r}(f'(y+rz)-f'(y))$ is the Hessian of $f$ multiplied by $z$, and is thus linear in $z$.
\FFK{
Similarly, to compute $P_\lambda^G({Y}_{\FFK{\ell-1}},{\Lambda}_{\FFK{\ell}})\delta \lambda$
and $Q_\lambda^G({Y}_{\FFK{\ell-1}},{\Lambda}_{\FFK{\ell}})\delta \lambda$ for a perturbation $\delta \lambda$ in $\Lambda_\ell$, it suffices to solve the same ODE system as \eqref{dBVP}, except the end-point conditions must be replaced by $z(T_{\ell-1}) = 0$, $\mu(T_\ell) = \delta\lambda$.
}
Therefore, if GMRES is used to solve the Jacobian system \eqref{eq:Newton}, then each matrix-vector multiplication
requires only the solution of coarse, \FFK{\emph{linear}} subproblems in parallel, which is much cheaper than solving coupled nonlinear subproblems
in the standard parareal approximation \eqref{stdParareal}. 
}
\par {To summarize, \MG{our new ParaOpt} method consists of solving for $n=0,1,2,\ldots$ the system
\begin{equation}\label{OurMethod}
{\cal J}^G\left(\begin{array}{c}
    Y^{n}\\
    \Lambda^{n}\\
  \end{array}\right)\left(\begin{array}{c}
    Y^{n+1}-Y^n\\
    \Lambda^{n+1}-\Lambda^n\\
  \end{array}\right)=-{\cal F}\left(\begin{array}{c}
    Y^n\\
    \Lambda^n\\
  \end{array}\right),
\end{equation}
for $Y^{n+1}$ and $\Lambda^{n+1}$, where
\begin{multline}\label{approxJacobian}
  \arraycolsep0em
  {\cal J}^G
  \left(\begin{array}{c}
    Y\\
    \Lambda\\
  \end{array}\right)=\\
    \footnotesize \arraycolsep1pt
\left(\begin{array}{ccccc|ccccc}
     I &   &   &   &  &  & & & & \\
     -P_y^G(\MG{Y}_0,\MG{\Lambda}_1) & I &   &   &  & -P_\lambda^G(\MG{Y}_0,\MG{\Lambda}_1) & & & \\
          & \ddots & \ddots & & & & \ddots & & & \\
          &        & -P_y^G(\MG{Y}_{L-1},\MG{\Lambda}_L)\ & I & & & & &
     -P_\lambda^G(\MG{Y}_{L-1},\MG{\Lambda}_L)\\\hline
      & -Q_y^G(\MG{Y}_1,\MG{\Lambda}_2) & &  & & I\quad & -Q_\lambda^G(\MG{Y}_1,\MG{\Lambda}_2) & & &  \\
      & \hfill\ddots & & & & & \ddots\qquad\qquad\phantom{|} &\ddots &  \\
      & &  -Q_y^G(\MG{Y}_{L-1},\MG{\Lambda}_L)& & &  &\phantom{|}\qquad \quad I & & -Q_\lambda^G(\MG{Y}_{L-1},\MG{\Lambda}_L) \\
      & & & -I & &  & & & I \\
  \end{array}\right)
\end{multline}
is an approximation of the true Jacobian in \eqref{Jacobian}. If the
system \eqref{OurMethod} is solved using a matrix-free method, the
action of the sub-blocks $P_y^G$, $P_\lambda^G$, etc. can be obtained
by solving coarse linear subproblems of the type \eqref{dBVP}. 
\FFK{Note that the calculation of $ {\cal J}^G$ times a vector (without preconditioning) is 
embarrassingly parallel, since it only requires the solution of local subproblems of the type \eqref{dBVP},
with no additional coupling to other sub-intervals. Global communication is only required in two places:
within the Krylov method itself (e.g. when calculating inner products), and possibly within the preconditioner. The design of an effective preconditioner is an important and technical topic that will be the subject of a future paper. Of course, for problems with small state spaces (e.g. for ODE control problems), direct methods may also be used, once the coefficients of ${\cal J}^G$ are calculated by
solving  \eqref{dBVP} for suitable choices of $\delta y$ and $\delta \lambda$.}

\FFK{Regardless of how
\eqref{OurMethod} is solved, 
}
since we use an approximation of the Jacobian, the resulting \FFK{inexact Newton}
method will no longer converge quadratically, but only linearly; \FFK{this is true} even
in the case where the differential equation is linear.  In the next
section, we will analyze in detail the convergence of the method for
the case of a diffusive linear problem.  }

\section{Implicit Euler for the \JS{diffusive} linear case}\label{Sec3}

We now consider the method in a linear and discrete setting. More precisely, we focus on a control problem
\begin{equation}\label{eq:state_system}
 \dot{y}(t)=A y(t) + c(t),
\end{equation}
where $A$ is a real, symmetric matrix with {\bf negative} eigenvalues. The matrix $A$ can for example
be a finite difference discretization of a diffusion operator in space.
We \FK{will consider} 
a {\it discretize-then-optimize} strategy, so the analysis that 
follows is done in a discrete setting. 

\subsection{Discrete formulation}
{To fix ideas, we choose the implicit Euler\footnote{\MG{We use the term `implicit Euler' instead of `Backward Euler' because the method is applied forward and backward in time.}} method for the time discretization; other
discretizations will be studied in a future paper.}
Let $M\in \mathbb{N}$, and $\MG{\delta t=T/M}$. Then  the implicit Euler method gives\footnote{
\FK{If the ODE system contains mass matrices arising from a finite element discretization, e.g.,
$$ {\cal M}y_{n+1} = {\cal M}y_n + \delta t(Ay_{n+1} + {\cal M}c_{n+1}), $$
then one can analyze ParaOpt by introducing the change of variables $\bar{y}_n := {\cal M}^{1/2}y_n$, $\bar{c}_n :={\cal M}^{1/2}c_n$, so as to obtain
$$ \bar{y}_{n+1} = \bar{y}_n + \delta t(\bar{A}\bar{y}_{n+1} + \bar{c}_{n+1}), $$
with $\bar{A}:={\cal M}^{-1/2}A{\cal M}^{-1/2}$. Since $\bar{A}$ is symmetric positive definite whenever $A$ is, the analysis is identical to that for \eqref{BE_state}, even though one would never calculate $ {\cal M}^{1/2}$ and $\bar{A}$ in actual computations.}}
\begin{equation}\label{BE_state}
y_{n+1} =  y_n + \delta t (Ay_{n+1} + c_{n+1}),
\end{equation}
or, equivalently,
$$ y_{n+1} = (I - \delta t A)^{-1}(y_n + \delta t c_{n+1}). $$
We minimize the cost functional
$$ J_{\delta t}(c)=\frac 12\|y_M-y_{target}\|^2+\frac\alpha 2 \delta t\sum_{n=0}^{M-1} \|c_{n+1}\|^2.$$



For the sake of simplicity, we keep the notations $y$, $\lambda$ and $c$ for the discrete variables, that is $y=(y_n)_{n=0,\cdots,M}$, $\lambda=(\lambda_n)_{n=0,\cdots,M}$ and $c=(c_n)_{n=0,\cdots,M}$.
Introducing the Lagrangian \FFK{(see \cite{ito2008lagrange,gander2014constrained} and also \cite{glowinski1994exact,troltzsch2010optimal,pearson2012regularization} for details)}
$${\mathcal L}_{\delta t}(y,\lambda,c)=J_{\delta t}(c)- \sum_{n=0}^{M-1}\left\langle\lambda_{n+1},  y_{n+1} - (I - \delta tA)^{-1} (y_n +\delta t c_{n+1})\right\rangle, $$
the optimality systems reads:
\begin{align}
y_0&=y_{init},\label{eq:yinitDisc_sys}\\
y_{n+1}&=(I - \delta t A)^{-1}(y_n+ \delta t c_{n+1}),& \FFK{n=0,1,\ldots, M-1,}
\label{eq:stateDisc_sys}\\
\lambda_M&=y_M-y_{target},\label{eq:adjointFinDisc_sys}\\
\lambda_n&=(I - \delta t A)^{-1}\lambda_{n+1}, & \FFK{n=0,1,\ldots, M-1,}\label{eq:adjointDisc_sys}\\
\alpha c_{n+1} &= - (I - \delta t A)^{-1}\lambda_{n+1}, & \FFK{n=0,1,\ldots, M-1,}
\label{eq:controlDisc_sys}
\end{align}
where we used the fact that $A$ is symmetric. If $A = VDV^T$ is the eigenvalue decomposition of $A$,
then the transformation $y_n \mapsto V^Ty_n$, $\lambda_n \mapsto V^T\lambda_n$, $c_n\mapsto V^Tc_n$ allows us to diagonalize the equations \eqref{eq:yinitDisc_sys}--\eqref{eq:controlDisc_sys} and obtain
a family of {\bf decoupled} optimality systems of the form
\begin{align}
y_0&=y_{init},\label{eq:yinitDisc}\\
y_{n+1}&=(I - \sigma\delta t)^{-1}(y_n+ \delta t c_{n+1}),& \FFK{n=0,1,\ldots, M-1,}
\label{eq:stateDisc}\\
\lambda_M&=y_M-y_{target},\label{eq:adjointFinDisc}\\
\lambda_n&=(I - \sigma\delta t)^{-1}\lambda_{n+1},& \FFK{n=0,1,\ldots, M-1,}\label{eq:adjointDisc}\\
\alpha c_{n+1} &= - (I - \sigma\delta t)^{-1}\lambda_{n+1},& \FFK{n=0,1,\ldots, M-1,}
\label{eq:controlDisc}
\end{align}
where the $y_{\FFK{n}}$, $\lambda_{\FFK{n}}$ and $c_{\FFK{n}}$ are now scalars, and $\sigma < 0$ is an eigenvalue of $A$. This motivates us to study the scalar {\bf Dahlquist} problem
\begin{equation*}
 \dot{y}(t)=\sigma y(t) + c(t),
\end{equation*}
where $\sigma$ is a real, negative number. For the remainder of this section, we will study the
\MG{ParaOpt} algorithm applied to the
scalar variant \eqref{eq:yinitDisc}--\eqref{eq:controlDisc}, particularly its convergence properties
as a function of $\sigma$.

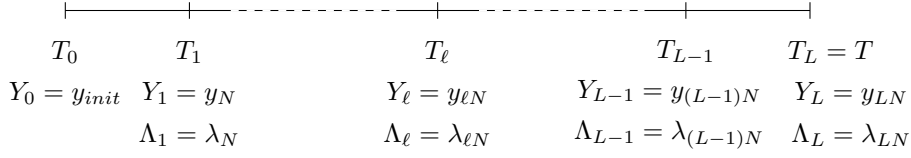
\begin{figure}
\begin{tikzpicture}
\begin{scope}[scale=1.1]

\draw (-5,0) -- (4,0);
\draw[white,dashed] (-3,0) -- (-1,0);
\draw[white,dashed] (0,0) -- (2,0);
\draw (-5,-.1) -- (-5,.1);  \draw (-3.5,-.1) -- (-3.5,.1);
\draw (4,-.1) -- (4,.1);    \draw (2.5,-.1) -- (2.5,.1);  
\draw (-.5,-.1) -- (-.5,.1);
\node at (-5,-.5) { $T_0$};\node at (-3.5,-.5) { $T_1$};
\node at ( 4.25,-.5) { $T_L=T$};\node at (2.5,-.5) { $T_{L-1}$};
\node at ( -.5,-.5) { $T_\ell$};

\node at (-5,-1) { $Y_0=y_{init}$};\node at (-3.5,-1) { $Y_1=y_N$};
\node at (4.5,-1) { $Y_{L}=y_{LN}$};
\node at ( -.5,-1) { $Y_\ell=y_{\ell N}$};
\node at (2.3,-1) { $Y_{L-1}=y_{(L-1)N}$};

\node at (-3.5,-1.5) { $\Lambda_1=\lambda_N$};
\node at ( 4.5,-1.5) { $\Lambda_L=\lambda_{LN}$};\node at (2.3,-1.5) { $\Lambda_{L-1}=\lambda_{(L-1)N}$};
\node at (-.5,-1.5) { $\Lambda_{\ell}=\lambda_{\ell N}$};
\end{scope}
\end{tikzpicture}
\caption{Notations associated with the parallelization setting.}\label{fig:not}
\end{figure}

\FK{Let us now write the linear
\MG{ParaOpt} algorithm for \eqref{eq:yinitDisc}--\eqref{eq:controlDisc}
in matrix form.} For the sake of simplicity, we assume that the subdivision is uniform, that is $T_\ell=\ell\Delta T $, where $N$ satisfies $\Delta T=N \delta t$ and $M=NL$, see Figure
\ref{fig:not}. We start by eliminating interior unknowns, i.e., ones that are not located
at the time points $T_0, T_1,\ldots T_L$. For $0\leq n_1 \leq n_2  \leq M$, \eqref{eq:stateDisc} and
\eqref{eq:controlDisc} together imply
\begin{align}\nonumber
y_{n_2}  &= (1-\sigma\delta t)^{n_1 - n_2} y_{n_1} -  \delta t
\sum_{j=0}^{n_2-n_1-1}(1-\sigma \delta t)^{n_1-n_2+j}c_{n_1+j+1}\\
&= (1-\sigma\delta t)^{n_1 - n_2} y_{n_1} -  \frac{\delta t}{\alpha}
\sum_{j=0}^{n_2-n_1-1}(1-\sigma \delta t)^{n_1-n_2+j-1}\lambda_{n_1+j+1}.\label{eq:yn_inter}
\end{align}
On the other hand, \eqref{eq:adjointDisc} implies
\begin{equation}\label{eq:adjoint_condensed}
\lambda_{n_1+j} = (1-\sigma \delta t)^{n_1-n_2+j} \lambda_{n_2}.
\end{equation}
Combining \eqref{eq:yn_inter} and \eqref{eq:adjoint_condensed} then leads to
\begin{equation}\label{eq:state_condensed}
y_{n_2} = (1-\sigma\delta t)^{n_1 - n_2} y_{n_1} -  \frac{\delta t}{\alpha}
\Bigl[\sum_{j=0}^{n_2-n_1-1}(1-\sigma \delta t)^{2(n_1-n_2+j)}\Bigr]\lambda_{n_2}.
\end{equation}
{
Setting $n_1 = (\ell-1)N$ and $n_2 = \ell N$, and using the notation $Y_\ell = y_{\ell N}$, $\Lambda_\ell
= \lambda_{\ell N}$ (see Figure \ref{fig:not}), we obtain from \eqref{eq:adjoint_condensed} and \eqref{eq:state_condensed} the equations
\begin{align*}
Y_0 &= y_{init}\\
-\beta_{\delta t} Y_{\ell-1} + Y_\ell + \frac{\gamma_{\delta t}}{\FFK{\alpha}} \Lambda_{\ell} &= 0, & 1\leq \ell \leq M,\\
\Lambda_{\ell-1} - \beta_{\delta t} \Lambda_\ell &= 0, & 0 \leq \ell \leq M-1,\\
Y_{\ell} + \Lambda_{\ell} &= y_{target},
\end{align*}
where 
\begin{eqnarray}
\beta_{\delta t} &:=& (1-\sigma\delta t)^{-\Delta T/\delta t}, \label{eq:beta_def}\\
\gamma_{\delta t}&:=& \delta t\sum_{j=0}^{N-1}(1-\sigma\delta t)^{2(j-N)}
= \frac{\beta_{\delta t}^2-1}{\sigma(2-\sigma \delta t)}.\label{eq:gamma_def}
\end{eqnarray}
}
In matrix form, this can be written as
$$\left( \begin{array}{cccc|ccccc}
1			&               	&	 			&   &         0      	& 			& 	& \\
-\beta_{\delta t}	&       \ddots  &	 			&   & \gamma_{\delta t}{/\alpha}	& \multicolumn{2}{c}{\ddots} 	& \\
              		&  	     \ddots	&	  \ddots		&   &  	      	&\multicolumn{2}{c}{\ddots}	      &  0\\
              		&  	      		&
							 -\beta_{\delta t} 	& 1 &           &  	      		&  	&  \gamma_{\delta t}{/\alpha}  \\
\hline
			&       	       	&   	  			&   & 1 		& -\beta_{\delta t}	& 	& \\
              		& 		 	& 	  			&   & 			& \ddots		& \ddots& \\
              		&  	      		& 	  			&   &  	      		& 			&  \ddots&  -\beta_{\delta t}\\
              		&  	      		&  	 		 	& -1 &               	&  	      		&  	&   1
\end{array}
\right)
\left( \begin{array}{c} Y_0 \\ 
\vdots \\ \vdots \\ Y_{L} \\\hline \Lambda_1 \\ \vdots \\ \vdots \\ \Lambda_L
\end{array}
\right)
=\left( \begin{array}{c} y_{init} \\ 0\\ \\ \\ \vdots \\ \\ \\ 0 \\ -y_{target}
\end{array}
\right),$$
or, in a more compact form,
\begin{equation}\label{discsys}
A_{\delta t}X=b.
\end{equation}
Note that this matrix has the same structure as the Jacobian matrix $\mathcal{F}$ in \eqref{Jacobian},
except that $Q_\lambda = 0$ for the linear case.
In order to solve \eqref{discsys} numerically, we consider a second time step $\Delta t$ such that $\delta t\leq \Delta t\leq \Delta T$. \FFK{In other words, for each sub-interval of length $\Delta T$, the derivatives of the propagators $P_y$, $Q_y$, $P_\lambda$, $Q_\lambda$ are approximated using a coarser
time discretization with time step $\Delta t \leq \Delta T$.}
\FK{The optimality system for this} 
 coarser time discretization has the form 
$$ A_{\Delta t}\hat{X} = b, $$
where $A_{\Delta t}$ has the same form as above, except that $\beta_{\delta t}$ and $\gamma_{\delta t}$
are replaced by $\beta_{\Delta t}$ and $\gamma_{\Delta t}$, i.e., the values obtained from the formulas \eqref{eq:beta_def} and \eqref{eq:gamma_def} when one replaces $\delta t$ by $\Delta t$. Then the
\FK{ParaOpt} 
algorithm (\ref{OurMethod}--\ref{approxJacobian}) for the linear Dahlquist problem can be written as
$$A_{\Delta t}(X^{k+1}-X^{k})=\FFK{-\mathcal{F}(X^k)} = -(A_{\delta t}X^k-b),$$
or, equivalently
\begin{equation}\label{eq:scheme}
X^{k+1} = \left(I-A_{\Delta t}^{-1}A_{\delta t}\right)X^{k}+A_{\Delta t}^{-1}b.
\end{equation}
Note that using this iteration, only a coarse matrix needs to be inverted.

\subsection{Eigenvalue problem}
In order to study the convergence of the iteration \eqref{eq:scheme}, we \FK{study the eigenvalues}
of the matrix $I-A_{\Delta t}^{-1}A_{\delta t}$, \FK{which} are given
by the generalized eigenvalue problem
\begin{equation}\label{eq:eig_prob}
(A_{\Delta t}-A_{\delta t})x=\mu A_{\Delta t}x,
\end{equation}
with $x=(v_0,v_1,\cdots,v_L,w_1,\cdots,w_L)^T$ being the eigenvector associated with the eigenvalue $\mu$.
{Since $A_{\Delta t}-A_{\delta t}$ has two zero rows, the eigenvalue $\mu=0$ must have multiplicity at least two.
Now let $\mu\neq 0$ be a non-zero eigenvalue. (If no such eigenvalue exists, then the preconditioning matrix is nilpotent and the iteration converges  in a finite number of steps.) Writing \eqref{eq:eig_prob} componentwise yields
}
\begin{eqnarray}
v_0&=& 0 \label{Compo:eq1}\\
\mu(v_\ell-\beta v_{\ell-1} +\gamma w_\ell{/\alpha}) &=& -\delta\beta v_{\ell-1}
+\delta\gamma w_{\ell}{/\alpha}
\label{Compo:eq2} \\
\mu(w_\ell-\beta w_{\ell+1})&=&-\delta\beta w_{\ell+1}\label{Compo:eq3}\\
\mu\left(w_L -v_L\right)&=&0,\label{Compo:eq4}
\end{eqnarray}
where we have introduced the simplified notation
\begin{equation}\label{eq:simplified_notations}
\beta  = \beta_{\Delta t},\ \gamma = \gamma_{\Delta t},\ \delta\beta=\beta_{\Delta t}-\beta_{\delta t},\
\delta\gamma=\gamma_{\Delta t}-\gamma_{\delta t}.
\end{equation}
{The recurrences \eqref{Compo:eq2} and~\eqref{Compo:eq3} are of the form}
\begin{equation}\label{vw_recurrence}
v_\ell = a v_{\ell-1} + bw_\ell, \qquad
w_\ell= aw_{\ell+1},
\end{equation}
where
$$
a = \beta  - \mu^{-1}\delta\beta, \qquad
b = \frac{-\gamma + \mu^{-1}\delta\gamma}{\alpha}.
$$
Solving the recurrence \eqref{vw_recurrence} in $v$ together with the initial condition \eqref{Compo:eq1}
leads to
\begin{equation}\label{eq:etape1}
v_L=\sum_{\ell=1}^{L}a^{L-\ell}bw_\ell,
\end{equation}
whereas the recurrence \eqref{vw_recurrence} in $w$ simply gives
\begin{equation}\label{eq:etape2}
w_\ell=a^{L-\ell}w_L.
\end{equation}
Combining~\eqref{eq:etape1} and~\eqref{eq:etape2}, we obtain $$v_L=\left(\sum_{\ell=1}^{L}a^{2(L-\ell)}b\right)w_L,$$ so that \eqref{Compo:eq4} gives rise to $P(\mu)w_L=0$, with
\begin{equation}\label{poly_def}
P(\mu)= \alpha\mu^{2L-1} + (\mu\gamma - \delta\gamma)\sum_{\ell=0}^{L-1}\mu^{2(L-\ell-1)}(\mu\beta-\delta\beta)^{2\ell}.
\end{equation}
Since we seek a non-trivial solution, we can assume $w_L \neq 0$. {Therefore, the eigenvalues of $I-A_{\Delta t}^{-1}A_{\delta t}$
consist of the number zero (with multiplicity two), together with the $2L-1$ roots of $P(\mu)$, which are all non-zero.
In the next subsection, we will give a precise characterization of the roots of $P(\mu)$, which depend on $\alpha$, as well as on $\sigma$ via the parameters $\beta$, $\delta \beta$, $\gamma$ and $\delta \gamma$.
}

\subsection{Characterization of eigenvalues}
In the next two results, we describe the location of the roots of $P(\mu)$ from the last section, or equivalently, the non-zero eigenvalues of the \FK{iteration} 
matrix $I - A_{\Delta t}^{-1}A_{\delta t}$. We first establish the sign of a few parameters in the case $\sigma < 0$, which is true for diffusive problems.

\begin{lemma}\label{lem:sign_of_parameters}
Let $\sigma < 0$. Then we have $0 < \beta < 1$, $0 < \delta \beta < \beta$, $\gamma > 0$ and
$\delta\gamma < 0$.
\end{lemma}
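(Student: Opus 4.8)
The plan is to reduce all four claims to the monotonicity of two elementary functions of the time step. Throughout write $s=-\sigma>0$, and for a generic step $\tau\in(0,\Delta T]$ set $\beta(\tau)=(1-\sigma\tau)^{-\Delta T/\tau}$ and $\gamma(\tau)=(\beta(\tau)^2-1)/(\sigma(2-\sigma\tau))$, following \eqref{eq:beta_def} and \eqref{eq:gamma_def}, so that $\beta=\beta(\Delta t)$, $\gamma=\gamma(\Delta t)$, $\delta\beta=\beta(\Delta t)-\beta(\delta t)$ and $\delta\gamma=\gamma(\Delta t)-\gamma(\delta t)$. I treat the generic case $\delta t<\Delta t$; if $\delta t=\Delta t$ then $A_{\Delta t}=A_{\delta t}$ and the iteration is exact, so the strict inequalities are the relevant ones.

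The bounds $0<\beta<1$ and $\gamma>0$ require essentially no work. Since $\sigma<0$ we have $1-\sigma\Delta t>1$, and raising this to the negative exponent $-\Delta T/\Delta t<0$ yields a number in $(0,1)$, giving $0<\beta<1$. For $\gamma$, the numerator $\beta^2-1$ is negative by the previous bound while the denominator $\sigma(2-\sigma\Delta t)=-s(2+s\Delta t)$ is also negative, so $\gamma>0$; equivalently, each term of the sum in \eqref{eq:gamma_def} is positive.

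The two remaining inequalities are the crux, and both follow from monotonicity in $\tau$. Writing $\beta(\tau)=\exp(-\Delta T\,g(\tau))$ with $g(\tau)=\tau^{-1}\ln(1+s\tau)$, I will show $g$ is strictly decreasing: its derivative has the sign of $\phi(\tau)=\frac{s\tau}{1+s\tau}-\ln(1+s\tau)$, which satisfies $\phi(0)=0$ and $\phi'(\tau)=-s^2\tau/(1+s\tau)^2<0$, hence $\phi(\tau)<0$ for $\tau>0$. Thus $g$ decreases, $\beta$ increases, and $\beta(\Delta t)>\beta(\delta t)>0$ gives $0<\delta\beta<\beta$ (the upper bound being simply $\beta_{\delta t}>0$). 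For $\gamma$, I rewrite $\gamma(\tau)=(1-\beta(\tau)^2)/(s(2+s\tau))$: the numerator is positive and decreasing since $\beta$ is positive and increasing, while the denominator is positive and increasing, so $\gamma(\tau)$ is strictly decreasing and $\delta\gamma=\gamma(\Delta t)-\gamma(\delta t)<0$.

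The main obstacle is the single calculus fact that $g(\tau)=\tau^{-1}\ln(1+s\tau)$ is strictly decreasing; once that is established, the sign of $\delta\beta$ is immediate, the sign of $\delta\gamma$ follows from it through the closed form \eqref{eq:gamma_def}, and the elementary bounds on $\beta$ and $\gamma$ are essentially free. A purely algebraic alternative, comparing $\beta(\delta t)$ and $\beta(\Delta t)$ directly via concavity of the logarithm, is conceivable but appears messier than differentiating $\phi$ once.
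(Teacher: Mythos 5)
Your proof is correct and follows essentially the same route as the paper's: $0<\beta<1$ and $\gamma>0$ are obtained by the same sign considerations, $\delta\beta>0$ comes from the monotonicity of $\tau\mapsto(1-\sigma\tau)^{-\Delta T/\tau}$, and $\delta\gamma<0$ from comparing the two quotients $(1-\beta_{\Delta t}^2)/(2+|\sigma|\Delta t)$ and $(1-\beta_{\delta t}^2)/(2+|\sigma|\delta t)$. The only difference is that you spell out the monotonicity computation that the paper dismisses here as ``by direct calculation''; your inequality $\phi(\tau)<0$ is exactly $\ln(1+x)>x/(1+x)$, which is the same fact the paper itself establishes later (Lemma \ref{log}, used in the proof of Lemma \ref{lem:dg_over_g}).
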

\begin{proof}
By the definitions \eqref{eq:beta_def} and \eqref{eq:simplified_notations}, we see that
$$\beta = \beta_{\Delta t} = (1-\sigma\Delta t)^{-\Delta T/\Delta t},$$
which is between $0$ and $1$, since $1-\sigma\Delta t > 1$ for $\sigma < 0$. Moreover, $\beta_{\Delta t}$
is an increasing function of $\Delta t$ by direct calculation, so that
$$ \delta \beta = \beta_{\Delta t} - \beta_{\delta t} > 0, $$
which shows that $0 < \delta \beta < \beta$. Next, we have by definition
$$ \gamma = \frac{\beta^2-1}{\sigma (2-\sigma\Delta t)}. $$
Since $\beta < 1$ and $\sigma < 0$, both the numerator and the denominator are negative, so $\gamma > 0$.
Finally, we have
$$
\delta\gamma = \frac{1}{|\sigma|}\left(\frac{1-\beta_{\Delta t}^2}{2+|\sigma|\Delta t} -
\frac{1-\beta_{\delta t}^2}{2+|\sigma|\delta t}\right) < 0, $$
since $1-\beta_{\Delta t}^2 < 1-\beta_{\delta t}^2$ and $2+|\sigma|\Delta t > 2+|\sigma|\delta t$, so the first quotient inside the parentheses is necessarily smaller than the second quotient.
\end{proof}

We are now ready to prove a first estimate for the eigenvalues of the matrix $I - A_{\Delta t}^{-1}A_{\delta t}$.
\begin{theorem}\label{thm:spectral}
Let $P$ be the polynomial defined in \eqref{poly_def}. For $\sigma < 0$, the roots of $P$ are contained in the set $D_\sigma\cup \{\mu^*\}$, where
\begin{equation}\label{Dsigma_def}
D_\sigma = \{\mu\in \C: |\mu-\mu_0| < \delta\beta/(1-\beta^2)\},
\end{equation}
where $\mu_0 = -\beta\delta\beta/(1-\beta^2)$, and $\mu^*  < 0$  is a real negative number.
\end{theorem}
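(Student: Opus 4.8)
The plan is to pass to the variable $a=\beta-\delta\beta/\mu$ already introduced in \eqref{vw_recurrence}, in which the whole problem becomes a question about the unit circle. Summing the geometric series in \eqref{poly_def} (equivalently, combining \eqref{eq:etape1}--\eqref{eq:etape2}), a nonzero $\mu$ is a root of $P$ if and only if $b\sum_{\ell=0}^{L-1}a^{2\ell}=1$, with $a=\beta-\delta\beta/\mu$ and $b=(-\gamma+\delta\gamma/\mu)/\alpha$. Since $1/\mu=(\beta-a)/\delta\beta$, the quantity $b$ is in fact \emph{affine} in $a$: writing $\kappa:=\delta\gamma/\delta\beta<0$ (negative by Lemma~\ref{lem:sign_of_parameters}) one gets $b=c_0+c_1a$ with $c_0=(-\gamma+\kappa\beta)/\alpha<0$ and $c_1=-\kappa/\alpha>0$. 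Thus, through the Möbius map $a\mapsto\mu=\delta\beta/(\beta-a)$, the $2L-1$ nonzero roots of $P$ correspond bijectively to the zeros of the degree-$(2L-1)$ polynomial $\tilde Q(a)=(c_0+c_1a)\sum_{\ell=0}^{L-1}a^{2\ell}-1$.

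The key geometric observation is that $D_\sigma$ is exactly the image of the exterior of the unit disk: a direct computation shows $|a|=1$ is equivalent to $|\mu-\mu_0|=\delta\beta/(1-\beta^2)$, and since $a=1/\beta$ (modulus $>1$) at $\mu=\mu_0$, we have $D_\sigma=\{|a|>1\}$, with $\partial D_\sigma=\{|a|=1\}$ and the exterior of $D_\sigma$ equal to $\{|a|<1\}$. So the theorem is equivalent to $\tilde Q$ having all but at most one of its zeros in $|a|>1$, the exceptional one being real and mapping to a negative $\mu$. To count the zeros of $\tilde Q$ inside the unit $a$-disk I would clear denominators, setting $R(a)=(a^2-1)\tilde Q(a)=(c_0+c_1a)(a^{2L}-1)-(a^2-1)$, and compare $R$ with its low-order part $R_0(a)=-\bigl(a^2+c_1a+(c_0-1)\bigr)$, so that $R-R_0=(c_0+c_1a)a^{2L}$. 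The crucial computation is that on $|a|=1$, with $a=e^{i\theta}$,
\[ |R_0(a)|^2-|R(a)-R_0(a)|^2=4(1-c_0)\sin^2\theta, \]
which is nonnegative \emph{precisely because} $c_0<0$ (Lemma~\ref{lem:sign_of_parameters}), with equality only at $a=\pm1$. This single identity does most of the work: for $\theta\neq0,\pi$ it gives $|R-R_0|<|R_0|$, so $\tilde Q$ cannot vanish on $\partial D_\sigma$ except possibly at $a=\pm1$; and Rouché's theorem on the unit circle (indented inward at $\pm1$, where $|a|^{2L}<1$ restores strictness) shows that $\tilde Q$ has the same number of zeros in $|a|<1$ as the quadratic $R_0$, namely at most one. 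Indeed $R_0$ has one root $a_-<-1$ and one root $a_+$ that lies inside the unit disk iff $c_0+c_1>0$; this $a_+$ is the lone possible zero of $\tilde Q$ strictly outside $D_\sigma$.

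It remains to identify this exceptional root. Because $P$ has real coefficients and $D_\sigma$ is symmetric about the real axis, a single non-real exterior zero would force its conjugate to be exterior as well, contradicting the count of at most one; hence the exceptional zero is real. Its sign follows from an elementary check on the real axis: for real $\mu>0$ one has $b<0$ (since $-\gamma<0$ and $\delta\gamma/\mu<0$) while $\sum_{\ell=0}^{L-1}a^{2\ell}\ge1>0$, so $b\sum_{\ell}a^{2\ell}<0\neq1$ and $P(\mu)\neq0$. As the roots of $P$ are already known to be nonzero, the exceptional real root must be negative, which is the claimed $\mu^*$.

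I expect the main obstacle to be exactly the two degenerate points $a=\pm1$, that is $\mu=\mu_0\mp\delta\beta/(1-\beta^2)$, where the comparison degenerates to $|R-R_0|=|R_0|$ and $R$ itself vanishes. Choosing the side of the contour indentation correctly, and ruling out that the single exterior zero ever splits into a strict-interior zero plus a boundary zero at $a=1$ (the knife-edge $c_0+c_1=1/L$), is the delicate part of the argument; everything else reduces to the sign information recorded in Lemma~\ref{lem:sign_of_parameters} and to the elementary identity above.
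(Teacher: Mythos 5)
Your reduction is correct and, up to the zero-counting step, coincides with the paper's own strategy: the substitution $a=\beta-\delta\beta/\mu$, the identification of $D_\sigma$ as the image of $\{|a|>1\}$, and your polynomial $\tilde Q$ (a scalar multiple of the paper's $Q(a)=\alpha\delta\beta/|\delta\gamma|+(C-a)\sum_{\ell=0}^{L-1}a^{2\ell}$, with $C=\beta+\gamma\delta\beta/|\delta\gamma|$) are all right. So are your concluding steps: realness of the exceptional root by conjugation symmetry, and negativity of $\mu^*$ via the remark that $b\sum_{\ell}a^{2\ell}<0$ for real $\mu>0$ --- a clean alternative to the paper's argument that $a^*>C$. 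Your on-circle identity $|R_0(e^{i\theta})|^2-|R(e^{i\theta})-R_0(e^{i\theta})|^2=4(1-c_0)\sin^2\theta$ also checks out.

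The gap is in the Rouch\'e step, and it sits exactly where you feared. Your parenthetical claim that indenting \emph{inward} at $a=\pm 1$ restores strictness because $|a|^{2L}<1$ is false at $a=+1$: setting $s=c_0+c_1$ and expanding at $a=1+tw$ with $t>0$, $\mathrm{Re}(w)<0$, one finds $|R_0|^2-|R-R_0|^2=4s(1-Ls)\,t\,\mathrm{Re}(w)+O(t^2)$, which is \emph{negative} whenever $0<Ls<1$ (and $s>0$ always, since $s=|\delta\gamma|(1-C)/(\alpha\delta\beta)$ with $C<1$ by Lemma \ref{constant_C}). This is not a repairable technicality of the estimate: in the regime $L(c_0+c_1)<1$, i.e.\ $L<\alpha L_0$ in the paper's notation (the $\alpha=1000$ panels of Figure \ref{fig3.4}), $\tilde Q$ has \emph{no} zeros in the closed unit disc, while $R_0$ always has exactly one there (since $R_0(0)=1-c_0>0$ and $R_0(1)=-s<0$ give a root $a_+\in(0,1)$); the case $L=1$ shows this concretely, as $\tilde Q$ then has the single zero $(1-c_0)/c_1>1$. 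Because the two counts genuinely differ, no contour enclosing essentially the unit disc while excluding both points $\pm 1$ can satisfy the Rouch\'e hypothesis, so the sentence ``$\tilde Q$ has the same number of zeros in $|a|<1$ as $R_0$'' cannot be rescued by refining the inequality. The repair is to indent \emph{outward} at $a=+1$ precisely when $Ls<1$, so that the enclosed region picks up the artificial zero of $R$ at $a=1$ created by the factor $a^2-1$, making both counts equal to one; combined with inward indentation when $Ls>1$ and a separate (perturbation/Hurwitz) treatment of the knife-edge $Ls=1$, where $a=1$ is itself a zero of $\tilde Q$, this completes your argument --- but that case analysis is exactly the substance your proposal defers. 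Note that the paper's winding-number argument avoids the issue entirely because it never multiplies by $a^2-1$: it shows the image of the unit circle under $(C-a)\sum_{\ell=0}^{L-1}a^{2\ell}$ meets the negative real axis at most once, which bounds the winding number about the negative real point $-\alpha\delta\beta/|\delta\gamma|$ by one uniformly in all parameter regimes.
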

\begin{proof}
Since zero is not a root of $P(\mu)$, we can divide $P(\mu)$ by $\mu^{2L-1}$ and see that $P(\mu)$ has
 the same roots as the function
 $$\hat{P}(\mu) = \alpha + (\gamma - \mu^{-1}\delta\gamma)\sum_{\ell=0}^{L-1}(\beta - \mu^{-1}\delta\beta)^{2\ell}.$$
Recall the change of variables
 $$a = \beta - \mu^{-1}\delta \beta\quad \iff \quad\mu = \frac{\delta \beta}{\beta -a}\;;$$
\FFK{substituting $a$ into $\hat{P}(\mu)$ and multiplying the result by $\delta \beta/|\delta\gamma|$
shows that} $P(\mu) = 0$ is equivalent to
$$Q(a) := \frac{\alpha\delta \beta}{|\delta\gamma|} + (C - a)\sum_{\ell=0}^{L-1}a^{2\ell} = 0,$$
with
\begin{equation}\label{eq:C_def}
C := \beta + \gamma\delta\beta/|\delta\gamma|> 0.
\end{equation}
We will now show that $Q(a)$ has at
most one root inside the unit disc $|a| \leq 1$; since the transformation from $\mu$ to $a$ maps
circles to circles, this would be equivalent to proving that $P(\mu)$ has at most one root outside the
disc $D_\sigma$. We now use the argument principle from complex analysis, which states that the difference between the
number of zeros and poles of $Q$ inside a closed contour $\mathcal{C}$ is equal to the winding number
of the contour $Q(\mathcal{C})$ around the origin. Since $Q$ is a polynomial and has no poles, this would allow us to count the number of zeros of $Q$ inside the unit disc. Therefore, we consider the winding number of the
contour $\Gamma = \{f(e^{i\theta}):0\leq \theta \leq 2\pi\}$ with
$$f(a) = (C - a)\sum_{\ell=0}^{L-1}a^{2\ell}$$
 around the point $-\alpha\delta\beta/|\delta\gamma|$, which is a real negative number. If we can show
 that $\Gamma$ intersects the negative real axis at at most one point, then it follows that the winding number around any negative real number cannot be greater than 1.

We now concentrate on finding values of $\theta$ such that
 $\arg(f(e^{i\theta}))=\pi \pmod{2\pi}$. Since $f(\overline{a}) = \overline{f(a)}$, it suffices to consider
 the range $0 \leq \theta \leq \pi$, and the other half of the range will follow by conjugation.
Since $f$ is a product, we deduce that
$$ \arg(f(e^{i\theta})) = \arg(C - e^{i\theta}) + \arg\left(1+e^{2i\theta}+\cdots + e^{2(L-1)i\theta}\right).$$
\begin{figure}[t]
\centering
\includegraphics[width=0.7\textwidth]{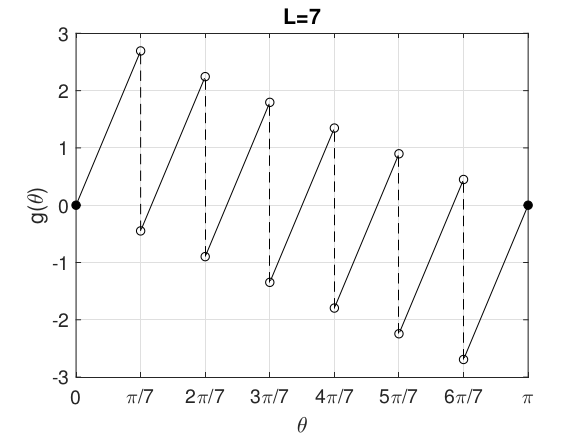}\\
\caption{\FFK{Plot of} $\FFK{g(\theta) = \arg}\left(\sum_{\ell=0}^{L-1} e^{2i\ell\theta}\right)$ for $L=7$. \label{fig:arg}}
\end{figure}
We consider the two terms on the right separately.
\begin{itemize}
\item For the first term, we have for all $0 < \theta < \pi$
$$ \theta-\pi < \arg(-e^{i\theta}) < \arg(C-e^{i\theta}) < 0, $$
since $C$ is real and positive. For $\theta = \pi$, we obviously have $\arg(C-e^{i\theta}) = 0$,
whereas for $\theta = 0$, we have $\arg(C-e^{i\theta}) = -\pi$ if $C < 1$, and $\arg(C-e^{i\theta}) = 0$
otherwise.
\item For the second term, observe that for $0 < \theta < \pi$, we have
$$ 1+e^{2i\theta}+\cdots + e^{2(L-1)i\theta} = \frac{1-e^{2iL\theta}}{1-e^{2i\theta}} = e^{(L-1)i\theta} \cdot \frac{\sin(L\theta)}{\sin(\theta)}. $$
Therefore, the second term is piecewise linear with slope $L-1$, with a jump of size $\pi$ whenever
$\sin(L\theta)$ changes sign, i.e., at $\theta = k\pi/L$, $k=1,\ldots, L-1$. Put within the range $(-\pi,\pi)$,
we can write
$$  \arg\left(\frac{1-e^{2iL\theta}}{1-e^{2i\theta}}\right) = (L-1)\theta - \left\lfloor \frac{L\theta}{\pi}\right\rfloor\pi =: g(\theta), \qquad 0 < \theta < \pi. $$
We also have $g(0) = g(\pi) = 0$ by direct calculation. The function $g$ satisfies the property
$ -\theta \leq g(\theta) \leq \pi - \theta$, see Figure \ref{fig:arg}.
 \end{itemize}
 From the above, we deduce that $\arg(f(e^{i\theta})) < \pi$ for all $0 \leq \theta \leq \pi$. Moreover,
 $$ \arg(f(e^{i\theta})) = \begin{cases}
 0, & \text{if $\theta = 0$ and $ C > 1$,}\\
 -\pi, & \text{if $\theta = 0$ and $ C < 1$,}\\
 \arg(C-e^{i\theta}) + g(\theta) > -\pi, & \text{if $0 < \theta < \pi$,}\\
 0, & \text{if $\theta = \pi$}.
 \end{cases}$$
\label{astar_argument} Thus, the winding number around the point $-\alpha\delta\beta/|\delta\gamma|$ cannot exceed one,
so at most one of the roots of $Q$ can lie inside the unit disc. If there is indeed such a root $a^*$, it must be real,
since the conjugate of any root of $Q$ is also a root. Moreover, \MG{it} must satisfy $a^* > C$, since $Q(a) > 0$
for any $a\leq C$. This implies
\begin{equation}\label{eq:astar_bound}
\beta - a^* < \beta - C = -\frac{\gamma\delta \beta}{|\delta\gamma|} < 0,
\end{equation}
so the corresponding $\mu^* = \delta\beta/(\beta-a^*)$ must also be negative.
\end{proof}

We have seen that the existence of $\mu^*$ depends on whether the constant $C$ is larger than 1.
The following lemma shows that we indeed have $C < 1$.

\begin{lemma} Let $\sigma < 0$. Then the constant $C = \beta + \gamma\delta\beta/|\delta\gamma|$,
defined in \eqref{eq:C_def}, satisfies $C < 1$.\label{constant_C}
\end{lemma}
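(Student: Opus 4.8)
The plan is to prove $C = \beta + \gamma\delta\beta/|\delta\gamma| < 1$ by expressing everything explicitly in terms of $\sigma$, $\Delta t$, $\delta t$, and $N = \Delta T/\delta t$, and then reducing the inequality to a comparison between the coarse and fine discretizations. First I would recall from Lemma~\ref{lem:sign_of_parameters} that all quantities have known signs, so the inequality $C<1$ is equivalent to $\gamma\delta\beta/|\delta\gamma| < 1-\beta$, i.e. $\gamma\,\delta\beta < (1-\beta)|\delta\gamma|$. Using the explicit formulas $\gamma = (\beta^2-1)/(\sigma(2-\sigma\Delta t))$ and the expression for $\delta\gamma$ appearing in the proof of Lemma~\ref{lem:sign_of_parameters}, together with $\beta=\beta_{\Delta t}$, $\delta\beta = \beta_{\Delta t}-\beta_{\delta t}$, I would substitute these in and clear denominators. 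Since $\gamma$ involves $1-\beta^2=(1-\beta)(1+\beta)$, the factor $(1-\beta)$ cancels on both sides, leaving an inequality that should simplify to a comparison of the form $(1+\beta_{\Delta t})/(2+|\sigma|\Delta t)$ against a combination of the analogous fine-grid quantity.

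The key algebraic step is to write $|\delta\gamma|$ explicitly. From the lemma's proof we have
\[
|\delta\gamma| = \frac{1}{|\sigma|}\left(\frac{1-\beta_{\delta t}^2}{2+|\sigma|\delta t} - \frac{1-\beta_{\Delta t}^2}{2+|\sigma|\Delta t}\right),
\]
and $\gamma = \dfrac{1-\beta_{\Delta t}^2}{|\sigma|(2+|\sigma|\Delta t)}$. The plan is then to divide the target inequality $\gamma\,\delta\beta < (1-\beta_{\Delta t})|\delta\gamma|$ through by the common positive factor $(1-\beta_{\Delta t})/|\sigma|$, which turns the claim into
\[
\frac{(1+\beta_{\Delta t})\,\delta\beta}{2+|\sigma|\Delta t} < \frac{1-\beta_{\delta t}^2}{2+|\sigma|\delta t} - \frac{1-\beta_{\Delta t}^2}{2+|\sigma|\Delta t}.
\]
Writing $\delta\beta = \beta_{\Delta t}-\beta_{\delta t}$ and moving the last term to the left, the left-hand side collapses: $(1+\beta_{\Delta t})\beta_{\Delta t} - (1-\beta_{\Delta t}^2)$ simplifies the numerator over $2+|\sigma|\Delta t$ to $\beta_{\Delta t}^2 + \beta_{\Delta t}^2 - \cdots$, and after regrouping I expect the inequality to reduce to showing that the scalar function $\phi(t) := (1-\beta_t^2)/(2+|\sigma| t)$, or a closely related monotone quantity, is decreasing in the step size $t$, so that the fine-grid value dominates the coarse-grid value.

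The main obstacle will be handling the mixed term $(1+\beta_{\Delta t})\beta_{\delta t}$ that does not factor cleanly across the two grids, since $\beta_{\Delta t}$ and $\beta_{\delta t}$ involve different exponents of $(1-\sigma t)$. I anticipate that the cleanest route is not to push the raw algebra but to introduce the single-variable function $h(t) = (1+\beta_t)/(2+|\sigma| t)$ and show $\beta_{\Delta t} h(\Delta t) \le \beta_{\delta t} h(\delta t)$ together with the monotonicity $\beta_t$ increasing in $t$; establishing monotonicity of $h$ (equivalently a sign condition on $h'$) via differentiation and the fact that $\beta_t = (1-\sigma t)^{-\Delta T/t}$ is the delicate part, because the exponent itself depends on $t$. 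I would therefore reduce to verifying a single scalar inequality in $t\in[\delta t,\Delta t]$ and confirm it by checking that the relevant derivative has a fixed sign on $\sigma<0$, which closes the argument.
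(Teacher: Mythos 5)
Your reduction is correct and is in fact the same one the paper uses: substituting $\gamma = (1-\beta_{\Delta t}^2)/(|\sigma|(2+|\sigma|\Delta t))$ and the displayed expression for $|\delta\gamma|$ into $\gamma\,\delta\beta < (1-\beta_{\Delta t})|\delta\gamma|$, the algebra collapses completely: the left-hand numerator becomes $(1+\beta_{\Delta t})(\beta_{\Delta t}-\beta_{\delta t}) + (1-\beta_{\Delta t}^2) = (1+\beta_{\Delta t})(1-\beta_{\delta t})$, and after dividing by $1-\beta_{\delta t} > 0$ the claim is exactly
\[
\frac{1+\beta_{\Delta t}}{2+|\sigma|\Delta t} < \frac{1+\beta_{\delta t}}{2+|\sigma|\delta t},
\]
which is precisely the inequality the paper arrives at. Your worry about a ``mixed term $(1+\beta_{\Delta t})\beta_{\delta t}$ that does not factor cleanly'' is unfounded, and the auxiliary condition $\beta_{\Delta t}h(\Delta t) \le \beta_{\delta t}h(\delta t)$ you introduce is both unnecessary and not what is needed; monotonicity of $h(t) = (1+\beta_t)/(2+|\sigma|t)$ in the step size $t$ alone settles the matter, exactly as in the paper.

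The genuine gap is at the step you defer to the end: ``checking that the relevant derivative has a fixed sign.'' That check \emph{is} the lemma; it is not a routine sign verification. Writing $x=|\sigma|t$ and $k=|\sigma|\Delta T$, the function is $f(x) = \bigl(1+(1+x)^{-k/x}\bigr)/(2+x)$, and $f'(x)<0$ is equivalent to \eqref{exp_eqn}, namely
\[
(2+x)\left[\frac{k\ln(1+x)}{x^2} - \frac{k}{x(1+x)}\right] - 1 < (1+x)^{k/x},
\]
where the bracket is strictly positive (this is exactly why $\beta_t$ is increasing in $t$), so the numerator of $f'$ is a difference of competing positive quantities and its sign is not evident. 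The paper first uses $\ln(1+x)\le x$ to reduce this to the inequality $(1+x)^{k/x} > k(2+x)/(1+x)-1$, i.e.\ \eqref{technical_estimate}, and then proves that inequality in a dedicated appendix (Lemma \ref{lemA.1}), which in turn requires the sharper logarithm bound $\ln(1+x) \ge \frac{x}{x+1} + \frac12\bigl(\frac{x}{x+1}\bigr)^2$ (Lemma \ref{log}) and a case analysis on $k$. Note also that \eqref{technical_estimate} is only true, and only needed, for $0<x\le k$: for $k>2$ it fails as $x\to\infty$ (the left side tends to $1$ while the right tends to $k-1$), so the sign condition on $f'$ rests on the structural constraint $\Delta t \le \Delta T$, not merely on $\sigma<0$ as your last sentence suggests. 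As written, your proposal asserts the conclusion of this analysis without supplying any of it, so the proof is incomplete at its crux.
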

\begin{proof}
We first transform the relation $C<1$ into a sequence of equivalent inequalities. Starting with the definition of $C$,
we have
\begin{align*}
C = \beta_{\Delta t} + \frac{\gamma_{\Delta t}(\beta_{\Delta t}-\beta_{\delta t})}{\gamma_{\delta t}
-\gamma_{\Delta t}} < 1
&\iff \beta_{\Delta t}(\gamma_{\delta t}-\gamma_{\Delta t}) + \gamma_{\Delta t}(\beta_{\Delta t}-\beta_{\delta t}) < \gamma_{\delta t}-\gamma_{\Delta t}\\
&\iff \gamma_{\Delta t}(1-\beta_{\delta t}) < \gamma_{\delta t}(1-\beta_{\Delta t})\\
&\iff \frac{(1-\beta_{\Delta t}^2)(1-\beta_{\delta t})}{|\sigma|(2+|\sigma|\Delta t)} <
\frac{(1-\beta_{\delta t}^2)(1-\beta_{\Delta t})}{|\sigma|(2+|\sigma|\delta t)}\\
&\iff \frac{1+\beta_{\Delta t}}{2+|\sigma|\Delta t} < \frac{1+\beta_{\delta t}}{2+|\sigma|\delta t},
\end{align*}
\FFK{where the last equivalence is obtained by multiplying both sides of the penultimate inequality by $|\sigma|$
and then dividing it by $(1-\beta_{\Delta t})(1-\beta_{\delta t})$.}
By the definition of $\beta_{\Delta t}$ and $\beta_{\delta t}$, the last inequality can be written as
$f(|\sigma|\Delta t) < f(|\sigma|\delta t)$, where
$$ f(x) := \frac{1+(1+x)^{-k/x}}{2+x} $$
with $k=|\sigma|\Delta T > 0$. Therefore, it suffices to show that $f(x)$ is decreasing for $0 \;\FFK{<}\; x \leq k$.
In other words, we need to show that
$$ f'(x) = \frac{(1+x)^{-k/x}}{2+x}\left[\frac{k\ln(1+x)}{x^2}  - \frac{k}{x(1+x)}\right] -
\frac{1+(1+x)^{-k/x}}{(2+x)^2} < 0. $$
%
%
This is equivalent to showing
\begin{equation}\label{exp_eqn}
(2+x)\left[\frac{k\ln(1+x)}{x^2}  - \frac{k}{x(1+x)}\right] -1 < (1+x)^{k/x}.
\end{equation}
Using the fact that $\ln(1+x) \leq x$, we see that the left hand side is bounded above by
\begin{align*}
(2+x)\left[\frac{k\ln(1+x)}{x^2}  - \frac{k}{x(1+x)}\right] -1
&\leq  (2+x)\left[\frac{kx}{x^2}  - \frac{k}{x(1+x)}\right] -1 \\
&= k\left(\frac{2+x}{1+x}\right) - 1.
\end{align*}
But for every $k>0$ and $0< x < k$ we have
\begin{equation}\label{technical_estimate}
(1+x)^{k/x} > k\left(\frac{2+x}{1+x}\right)-1,
\end{equation}
see proof in the appendix. Therefore, \eqref{exp_eqn} is satisfied by all $k>0$ and $0 < x < k$, so $f$ is in fact
decreasing. It follows that $C<1$, as required.
\end{proof}

\begin{theorem}
Let $\sigma < 0$ be fixed, and let
\begin{equation}\label{L0_def}
L_0 := \frac{C-\beta}{\gamma(1-C)}.
\end{equation}
Then the spectrum of $I-A_{\Delta t}^{-1}A_{\delta t}$ has an eigenvalue $\mu^*$ outside the disc $D_{\sigma}$ defined in \eqref{Dsigma_def} if and only if the
number of subintervals $L$ satisfies $L > \alpha L_0$\MG{, where $\alpha$ is
  the regularization parameter}.
\end{theorem}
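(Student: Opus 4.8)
The plan is to reduce the statement to a sign analysis of the auxiliary polynomial $Q$ introduced in the proof of Theorem~\ref{thm:spectral}, whose only possibly relevant root was already localized there. Recall that under the M\"obius change of variables $\mu = \delta\beta/(\beta - a)$, the equation $P(\mu) = 0$ is equivalent to $Q(a) = 0$, and that the unique root of $Q$ that can lie inside the closed unit disc is real and strictly larger than $C$. Moreover, this change of variables maps the open unit disc in the $a$-plane onto the exterior of $D_\sigma$: for instance $a = 0$ maps to $\mu = \delta\beta/\beta$, whose distance to the center $\mu_0$ of $D_\sigma$ equals $1/\beta$ times its radius, hence exceeds it since $0 < \beta < 1$. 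Consequently, the existence of an eigenvalue $\mu^*$ strictly outside $D_\sigma$ is equivalent to $Q$ having a (necessarily real) root $a^*$ in the interval $(C, 1)$, where $C < 1$ by Lemma~\ref{constant_C}.

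First I would show that $Q$ is strictly decreasing on $(C, 1)$. Writing $Q(a) = \alpha\delta\beta/|\delta\gamma| + (C - a)S(a)$ with $S(a) = \sum_{\ell=0}^{L-1} a^{2\ell}$, differentiation gives $Q'(a) = -S(a) + (C - a)S'(a)$, which is negative on $(C,1)$ because $S(a) > 0$, $S'(a) \geq 0$ for $a > 0$, and $C - a < 0$ there. Next I would evaluate $Q$ at the two endpoints: at $a = C$ the second term vanishes, giving $Q(C) = \alpha\delta\beta/|\delta\gamma| > 0$, while at $a = 1$ we obtain $Q(1) = \alpha\delta\beta/|\delta\gamma| - (1 - C)L$, using $S(1) = L$. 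By monotonicity and the intermediate value theorem, $Q$ has a root in $(C, 1)$ if and only if $Q(1) < 0$.

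It then remains to convert $Q(1) < 0$ into the stated threshold on $L$. From the definition \eqref{eq:C_def} we have $C - \beta = \gamma\delta\beta/|\delta\gamma|$, so the constant in \eqref{L0_def} simplifies to $L_0 = (C-\beta)/(\gamma(1-C)) = \delta\beta/\bigl(|\delta\gamma|(1-C)\bigr)$, whence $\alpha L_0 = \alpha\delta\beta/\bigl(|\delta\gamma|(1-C)\bigr)$. The inequality $Q(1) < 0$ reads $\alpha\delta\beta/|\delta\gamma| < (1-C)L$, which is precisely $L > \alpha L_0$. Chaining the equivalences, namely $\mu^*$ outside $D_\sigma$ $\iff$ $a^* \in (C,1)$ $\iff$ $Q(1) < 0$ $\iff$ $L > \alpha L_0$, establishes the theorem.

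I do not anticipate a serious obstacle, since the argument is a short calculation once the earlier results are in hand; the one point requiring care is the borderline case $L = \alpha L_0$. There $Q(1) = 0$, so $a^* = 1$, and the corresponding $\mu^*$ lands exactly on the boundary $\partial D_\sigma$ (one verifies that $|\mu^* - \mu_0|$ equals the radius of $D_\sigma$ when $a^* = 1$). This is exactly why the equivalence is stated with the strict inequality and why ``outside'' must be read as strictly outside. The monotonicity of $Q$ is the crux: it guarantees that for $L < \alpha L_0$ no root of $Q$ enters the open unit disc, so no eigenvalue escapes $D_\sigma$, and for $L > \alpha L_0$ exactly one does.
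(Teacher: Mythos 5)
Your proof is correct, and it establishes the paper's key criterion by a genuinely different, more elementary argument. The paper stays with the complex-analytic machinery of Theorem \ref{thm:spectral}: the isolated eigenvalue exists if and only if the winding number of $Q(e^{i\theta})$ about the origin is non-zero, and since that curve meets the negative real axis at most once (at $\theta=0$), the winding number is non-zero exactly when $Q(1)<0$; incidentally, the paper writes ``$Q(-1)<0$'' at this step, but the quantity it displays, $\alpha\delta\beta/|\delta\gamma|+(C-1)L$, is $Q(1)$, so your identification of $a=1$ as the relevant evaluation point is the correct reading. You replace the winding-number step by real calculus: every root of $Q$ in the closed unit disc is real and larger than $C$ (correctly imported from Theorem \ref{thm:spectral}), $Q(C)=\alpha\delta\beta/|\delta\gamma|>0$, and $Q'(a)=-S(a)+(C-a)S'(a)<0$ on $(C,1)$, so by the intermediate value theorem a root exists in $(C,1)$ if and only if $Q(1)<0$; the algebraic conversion of $Q(1)<0$ into $L>\alpha L_0$ via $C-\beta=\gamma\delta\beta/|\delta\gamma|$ is then identical to the paper's. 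Your route buys two things: it avoids any delicacy about applying the argument principle when a zero may sit on the contour, and it makes the borderline case $L=\alpha L_0$ explicit (root at $a^*=1$, eigenvalue landing exactly on $\partial D_\sigma$), a boundary issue the paper passes over silently and which, as you note, forces one to read ``outside $D_\sigma$'' as strictly outside the closed disc for the stated equivalence to be exact. What the paper's formulation buys in exchange is economy: having already set up the contour analysis in Theorem \ref{thm:spectral}, its existence criterion follows in one line.
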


\begin{proof}
The isolated eigenvalue exists if and only if the winding number of $Q(e^{i\theta})$ about the origin is non-zero. Since $Q(e^{i\theta})$ only
intersects the negative real axis at most once, we see that the winding number is non-zero when $Q(-1) < 0$, i.e., when
$$ \frac{\alpha\delta\beta}{|\delta\gamma|} + (C-1)L < 0.$$
Using the definition of $C$, this leads to
$$ \frac{\alpha(C-\beta)}{\gamma}+(C-1)L < 0 \iff L > \frac{\alpha(C-\beta)}{\gamma(1-C)}, $$
hence the result.
\end{proof}

\subsection{Spectral radius estimates}
The next \MG{theorem} now gives a more precise estimate on the isolated eigenvalue $\mu^*$.
\begin{theorem}\label{thm:isolated}
Suppose that the number of intervals $L$ satisfies $L > \alpha L_0$, with $L_0$ defined in
\eqref{L0_def}. Then the real negative eigenvalue $\mu^*$ outside the disc $D_\sigma$
is bounded below by
$$
\mu^* > -\frac{|\delta\gamma|+\alpha\delta\beta(1+\beta)}{\gamma+\alpha(1-\beta^2)}.
$$
\end{theorem}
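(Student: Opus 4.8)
The plan is to work with the real root $a^*$ of the auxiliary polynomial $Q$ constructed in the proof of Theorem~\ref{thm:spectral}, rather than with $\mu^*$ directly, and to translate the desired estimate through the change of variables $\mu = \delta\beta/(\beta-a)$. Since we assume $L > \alpha L_0$, the isolated eigenvalue exists, so $Q$ has a unique root $a^*$ inside the unit disc; it is real with $a^* > C$, and because $Q(C)=\alpha\delta\beta/|\delta\gamma|>0$ while $Q(1)=\alpha\delta\beta/|\delta\gamma|+(C-1)L<0$ precisely under the hypothesis $L>\alpha L_0$, we in fact have $C < a^* < 1$. The starting point is then the identity $Q(a^*)=0$, which after using $C = \beta + \gamma\delta\beta/|\delta\gamma|$ from \eqref{eq:C_def} reads
\[
\frac{\alpha\delta\beta}{|\delta\gamma|} = (a^* - C)\sum_{\ell=0}^{L-1}(a^*)^{2\ell}.
\]

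First I would eliminate the dependence on $L$ by bounding the geometric sum. Because $0 < a^* < 1$, the sum equals $\frac{1 - (a^*)^{2L}}{1 - (a^*)^2} < \frac{1}{1 - (a^*)^2}$, and since $a^* - C > 0$ this yields the $L$-independent inequality
\[
\alpha\delta\beta\,(1 - (a^*)^2) < (a^* - C)\,|\delta\gamma|.
\]
Next I would introduce $s := a^* - \beta$, which satisfies $0 < s < 1 - \beta$ (positivity from $a^* > C > \beta$, the upper bound from $a^* < 1$; the relevant signs of $\beta$, $\delta\beta$, $\gamma$, $\delta\gamma$ all come from Lemma~\ref{lem:sign_of_parameters}). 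Expanding $1 - (a^*)^2 = 1 - \beta^2 - 2\beta s - s^2$ and substituting $C - \beta = \gamma\delta\beta/|\delta\gamma|$ turns the inequality into
\[
\delta\beta\bigl[\alpha(1 - \beta^2) + \gamma\bigr] < s\,|\delta\gamma| + 2\alpha\beta\delta\beta\,s + \alpha\delta\beta\,s^2.
\]

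The key trick is to linearise the quadratic term using $s < 1 - \beta$, so that $\alpha\delta\beta\,s^2 < \alpha\delta\beta(1-\beta)s$; combining this with the other $s$-terms collapses the bracket to $|\delta\gamma| + \alpha\delta\beta(1+\beta)$, leaving the purely linear bound
\[
\delta\beta\bigl[\alpha(1 - \beta^2) + \gamma\bigr] < s\,\bigl[\,|\delta\gamma| + \alpha\delta\beta(1+\beta)\,\bigr].
\]
Solving for $s$ and recalling that $\mu^* = \delta\beta/(\beta - a^*) = -\delta\beta/s$, so that $|\mu^*| = \delta\beta/s$, immediately gives $\mu^* > -\bigl(|\delta\gamma| + \alpha\delta\beta(1+\beta)\bigr)/\bigl(\gamma + \alpha(1-\beta^2)\bigr)$, as claimed. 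The main obstacle I anticipate is keeping the inequality directions straight once the geometric sum is replaced and the $s^2$ term is linearised: both replacements enlarge the right-hand side, and one must check that this is still compatible with extracting a lower bound on $s$ (equivalently an upper bound on $|\mu^*|$). The sign information from Lemma~\ref{lem:sign_of_parameters}, together with the strict inequality $a^* < 1$ guaranteed by $L > \alpha L_0$, is exactly what makes each step strict and the final division by $|\delta\gamma| + \alpha\delta\beta(1+\beta) > 0$ legitimate.
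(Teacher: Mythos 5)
Your proof is correct, and it reaches the paper's bound by a partly different route. The first half coincides with the paper's argument: both start from $Q(a^*)=0$ with $C<a^*<1$ and discard the $(a^*)^{2L}$ term (your geometric-sum bound is exactly the paper's step of dropping $\frac{(C-a^*)(a^*)^{2L}}{1-(a^*)^2}$), arriving at the same quadratic inequality $\alpha\delta\beta\bigl(1-(a^*)^2\bigr)+|\delta\gamma|(C-a^*)<0$. The endgame differs. The paper solves this quadratic exactly for $a^*$ via the quadratic formula, substitutes into $\mu^*=\delta\beta/(\beta-a^*)$, rationalizes, and only then relaxes the resulting expression by dropping the term containing $(1-C)$ inside the square root, a step that invokes Lemma~\ref{constant_C} ($C<1$). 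You instead set $s=a^*-\beta$ and linearize via $s^2<(1-\beta)s$, which uses $a^*<1$ rather than $C<1$; this turns the quadratic inequality into a linear one in $s$, and the bound follows from a single division together with $|\mu^*|=\delta\beta/s$. Both relaxations land on exactly the same final estimate, so nothing is lost; your version avoids the quadratic formula and the square-root manipulation entirely, while the paper's computation yields, as a by-product, the sharper intermediate bound given by the exact root of the quadratic. Two smaller points: your intermediate-value argument ($Q(C)>0$, $Q(1)<0$ precisely when $L>\alpha L_0$) gives a self-contained proof that $a^*\in(C,1)$, where the paper instead imports $a^*>C$ from the end of the proof of Theorem~\ref{thm:spectral} and uses the unit-disc assumption for $a^*<1$; and your care with inequality directions (each replacement enlarges the right-hand side, hence weakens but never invalidates the resulting lower bound on $s$) is precisely the check that makes the linearization legitimate.
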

\begin{proof}
Suppose $a^* = \beta - \delta\beta/\mu^*$ is a real root of $Q(a)$ inside the unit disc. We have
seen at the end of the proof of Theorem \ref{thm:spectral} (page
\pageref{astar_argument}, \FFK{immediately before Equation \eqref{eq:astar_bound}) that $a^* > C$;
moreover, since $a^*$ is assumed to be inside the unit circle, we must have $a^* < 1$. 
Therefore, $a^*$  satisfies  $C < a^* < 1$.} This implies
$$ \frac{\alpha\delta\beta}{|\delta\gamma|} + \frac{C-a^*}{1-(a^*)^2} = \frac{(C-a^*)(a^*)^{2L}}{1-(a^*)^2} < 0. $$
Therefore, $a^*$ satisfies
$$ (1-(a^*)^2)\alpha\delta\beta + |\delta\gamma|(C-a^*) < 0\MG{,} $$
which means
\begin{align*}
 a^* >& \frac{-|\delta\gamma|+\sqrt{|\delta\gamma|^2 + 4\alpha\delta\beta(\alpha\delta\beta+C|\delta\gamma|)}}{2\alpha\delta\beta}\\
=& \frac{-|\delta\gamma|+\sqrt{(|\delta\gamma|^2 + 2\alpha\delta\beta)^2-4(1-C)\alpha\delta\beta|\delta\gamma|}}{2\alpha\delta\beta}.
\end{align*}
Therefore,
\begin{align*}
\mu^*&=\frac{\delta\beta}{\beta-a^*}\\
&> \frac{2\alpha\delta\beta^2}{(2\alpha\beta\delta\beta+|\delta\gamma|)-\sqrt{(|\delta\gamma| + 2\alpha\delta\beta)^2-4(1-C)\alpha\delta\beta|\delta\gamma|}}\\
&= \frac{2\alpha\delta\beta^2\left[(2\alpha\beta\delta\beta+|\delta\gamma|)+\sqrt{(|\delta\gamma| + 2\alpha\delta\beta)^2-4(1-C)\alpha\delta\beta|\delta\gamma|}\right]}
{(2\alpha\beta\delta\beta+|\delta\gamma|)^2-(|\delta\gamma|+2\alpha\delta\beta)^2+4(1-C)\alpha\delta\beta|\delta\gamma|}\\
&= \frac{\delta\beta\left[(2\alpha\beta\delta\beta+|\delta\gamma|)+\sqrt{(|\delta\gamma| + 2\alpha\delta\beta)^2-4(1-C)\alpha\delta\beta|\delta\gamma|}\right]}
{2(\beta-C)|\delta\gamma|+2\alpha\delta\beta(\beta^2-1)}\\
&= -\frac{(2\alpha\beta\delta\beta+|\delta\gamma|)+\sqrt{(|\delta\gamma| + 2\alpha\delta\beta)^2-4(1-C)\alpha\delta\beta|\delta\gamma|}}
{2\gamma+2\alpha(1-\beta^2)}\\
&> -\frac{|\delta\gamma|+\alpha\delta\beta(1+\beta)}{\gamma+\alpha(1-\beta^2)},
\end{align*}
where the last inequality is obtained by dropping the term containing $(1-C)$ inside the square root, which
makes the square root larger since $C < 1$.
\end{proof}

To illustrate the above theorems, we show in Figures \ref{fig3.3} and \ref{fig3.4} the spectrum of the \FK{iteration} 
matrix $I-A_{\Delta t}^{-1}A_{\delta t}$ for different values of
$\sigma$ and for $\alpha = 1$ and 1000. Here, the time interval $[0,T]$ is subdivided into $L=30$ subintervals, and each subinterval contains 50 coarse time steps and 5000 fine time steps.
Table~\ref{table3.1} shows the values of the relevant parameters. For $\alpha = 1$, we see that
there is always one isolated eigenvalue on the negative real axis, since $L > L_0$ in all cases, and
its location is predicted rather accurately by the formula~\eqref{mu_bound}. The rest of the eigenvalues
\FK{all lie within the disc $D_\sigma$ defined in}~\eqref{Dsigma_def}. For $\alpha = 1000$,
the bounding disc is identical to the previous case; however, since
we have $L < \alpha L_0$ for all cases except for $\sigma = -16$, we observe no
eigenvalue outside the disc, except for \MG{the} very last case. In that very last case, we have $|\delta\gamma| = 0.0107$, so \eqref{mu_bound} gives the lower bound $\mu^* > -1.07\times 10^{-5}$, which again
is quite accurate when compared with the bottom right panel of Figure \ref{fig3.4}.

\begin{table}
\centering
\caption{Parameter values for $T=100$, $L=30$, $\Delta T/\Delta t =50$, $\Delta t/\delta t=100$.
\label{table3.1}}
{\small \tabcolsep4pt
\begin{tabular}{|c||c|c|c|c|c|c|}
\hline
$\sigma$ & $\beta$ & $\gamma$ & $C$ & $L_0$ & Radius of $D_\sigma$ & $\mu^*$ bound ($\alpha=1$) \\
\hline
$-1/8$ & 0.6604 & 2.2462 & 0.8268 & 0.4280 & $2.00\times 10^{-3}$ & $\FFK{-}6.08\times 10^{-3}$\\
$-1/4$ & 0.4376 & 1.6037 & 0.6960 & 0.5300 & $3.67\times 10^{-3}$ & $\FFK{-}9.34\times 10^{-3}$\\
$-1/2$ & 0.1941 & 0.9466 & 0.4713 & 0.5539 & $5.35\times 10^{-3}$ & $\FFK{-}1.24\times 10^{-2}$\\
$-1$ & 0.0397 & 0.4831 & 0.1588 & 0.2930 & $3.97\times 10^{-3}$ & $\FFK{-}1.36\times 10^{-2}$\\
$-2$ & 0.0019 & 0.2344 & 0.0116 & 0.0417 & $6.36\times 10^{-4}$ & $\FFK{-}1.30\times 10^{-2}$\\
$-16$ & $1.72\times 10^{-16}$ & 0.0204 & $5\times 10^{-16}$ & $1.61\times 10^{-14}$ & $1.72\times 10^{-16}$ & $\FFK{-}1.05\times 10^{-2}$ \\
\hline
\end{tabular}
}
\end{table}
\begin{figure}
\centering
\rotatebox{90}{\hskip 55pt $\mathrm{Im}(\mu)$}
\includegraphics[width=0.47\textwidth]{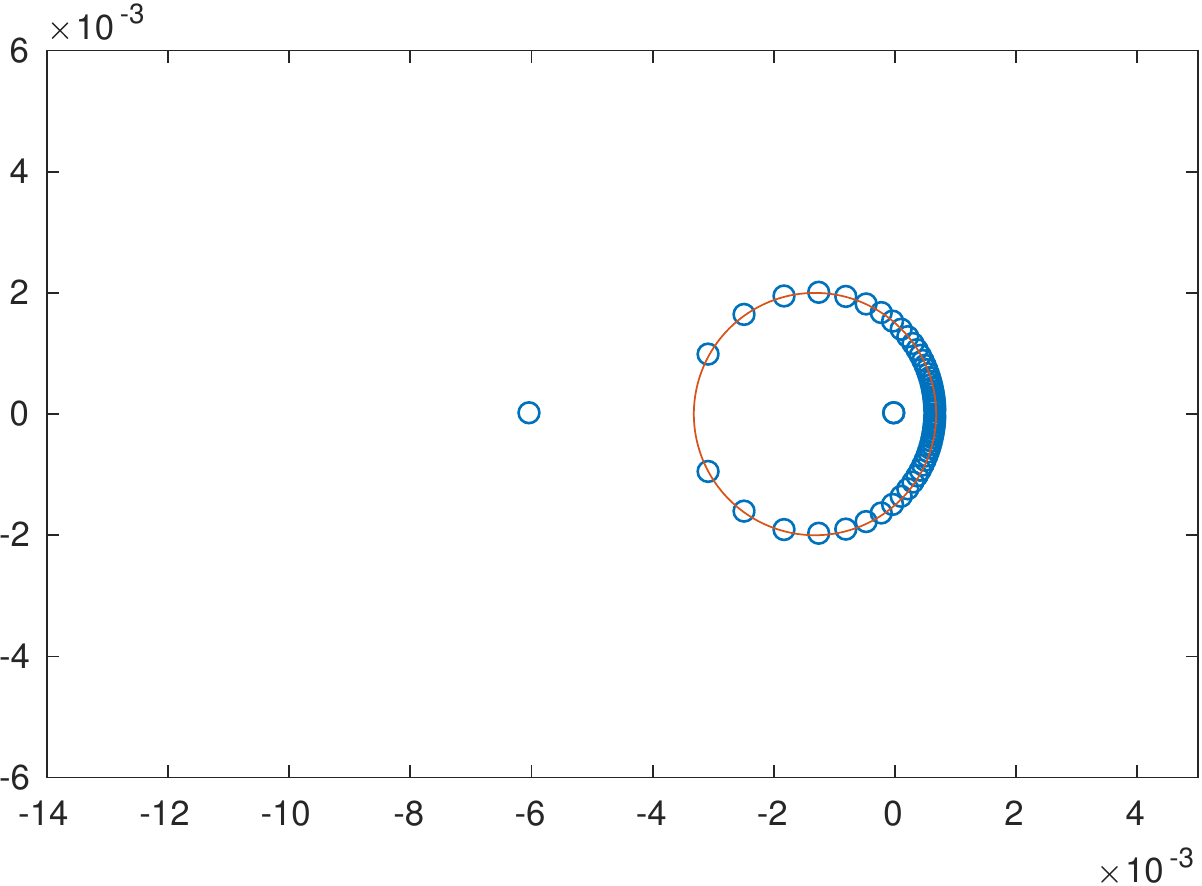}
\includegraphics[width=0.47\textwidth]{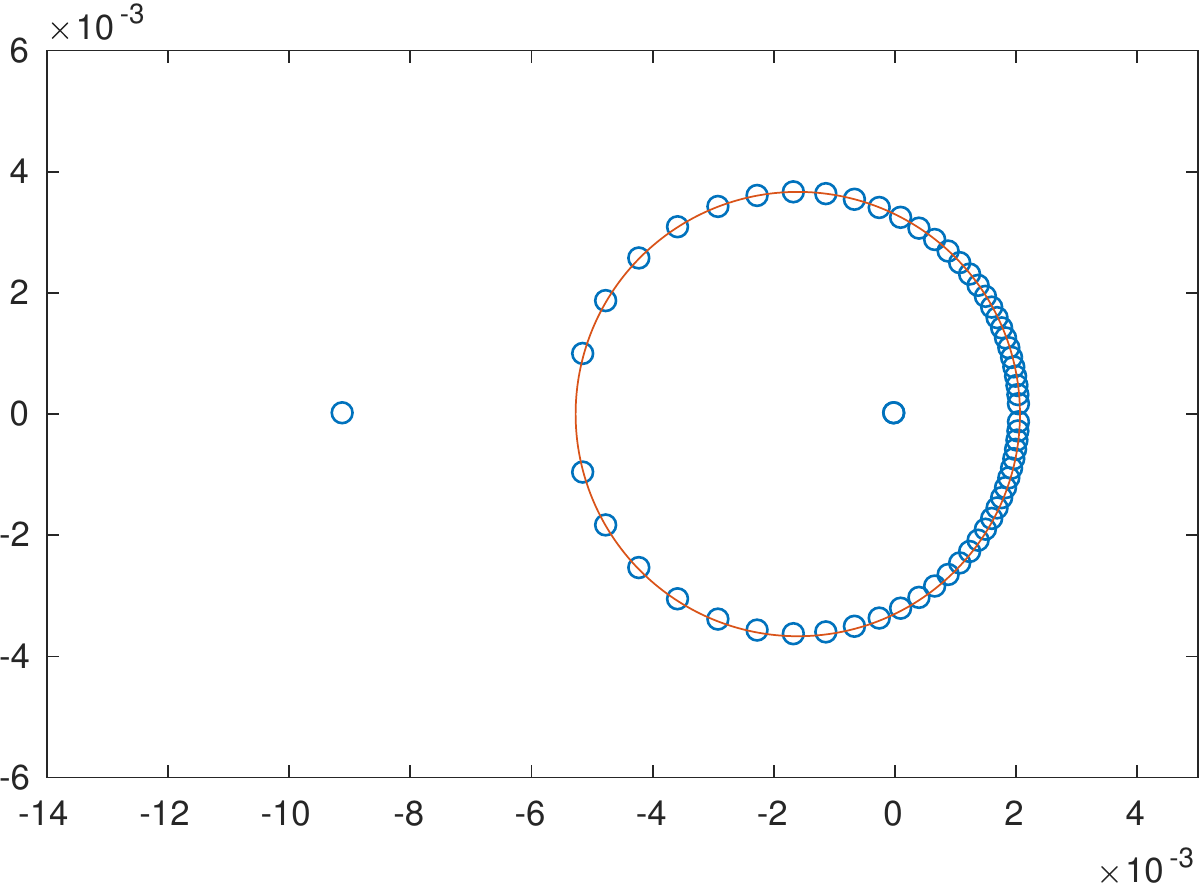}\\
\rotatebox{90}{\hskip 55pt $\mathrm{Im}(\mu)$}
\includegraphics[width=0.47\textwidth]{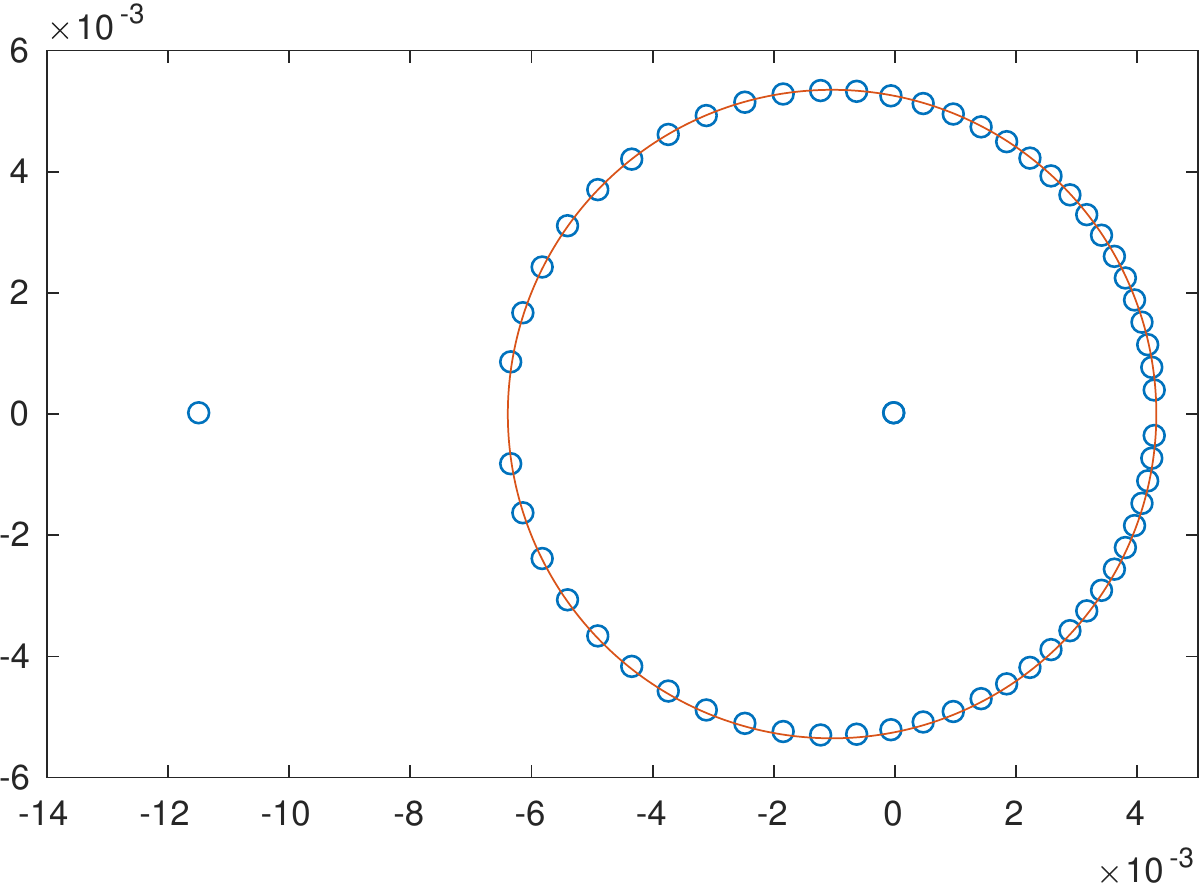}
\includegraphics[width=0.47\textwidth]{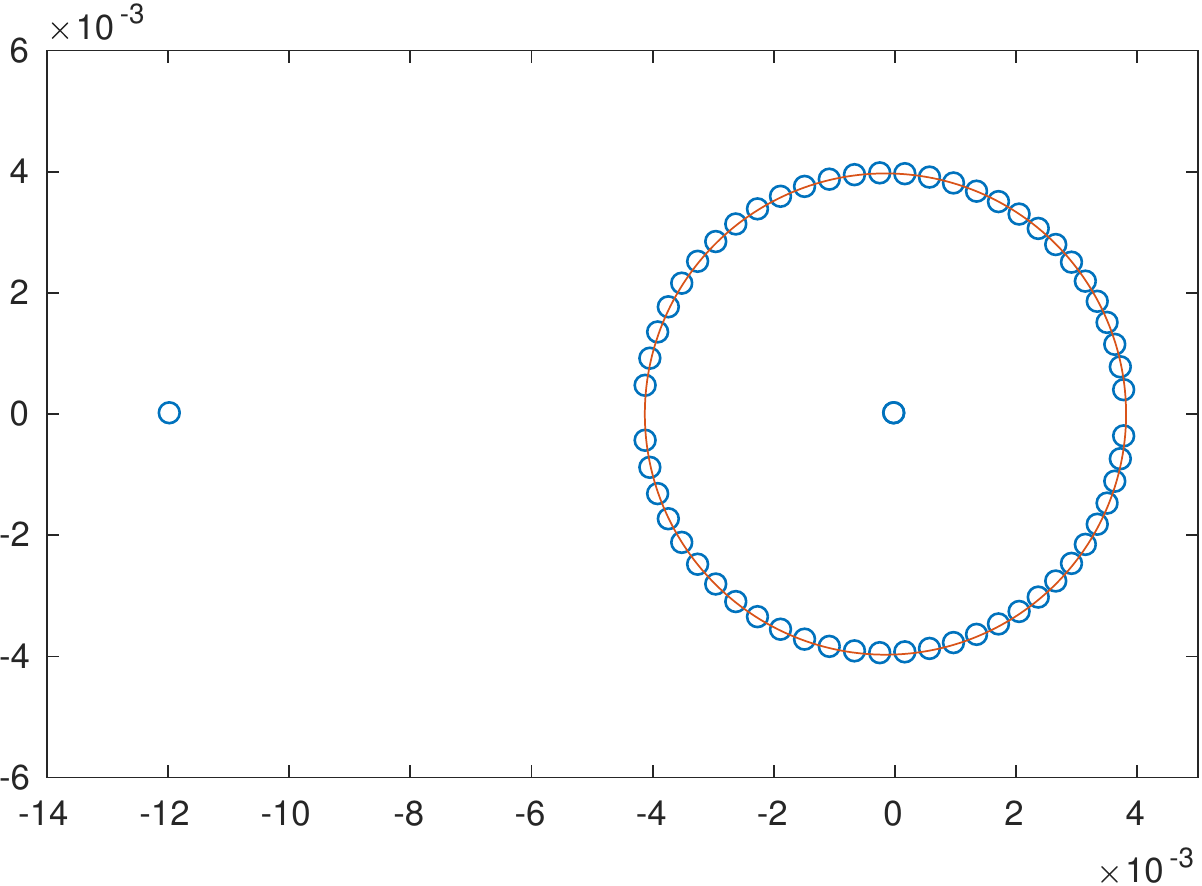}\\
\rotatebox{90}{\hskip 55pt $\mathrm{Im}(\mu)$}
\includegraphics[width=0.47\textwidth]{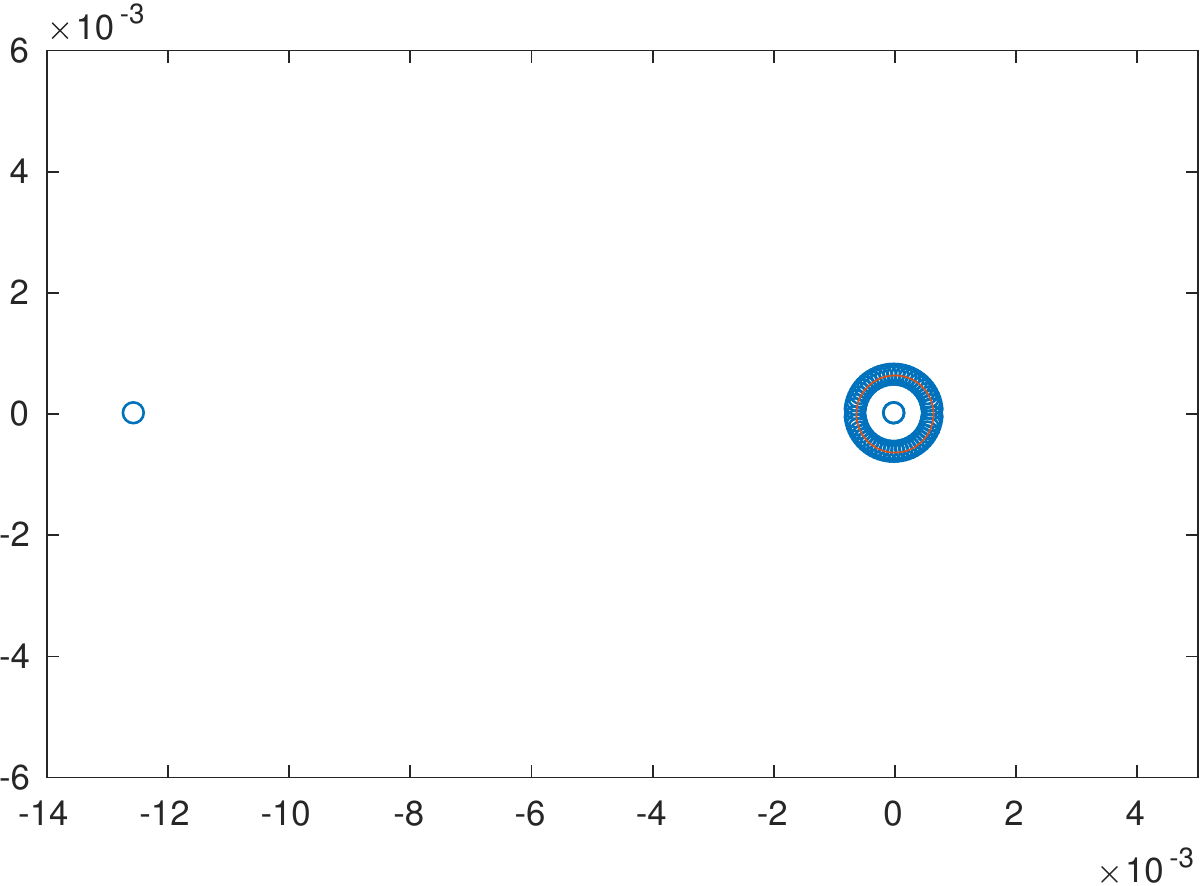}
\includegraphics[width=0.47\textwidth]{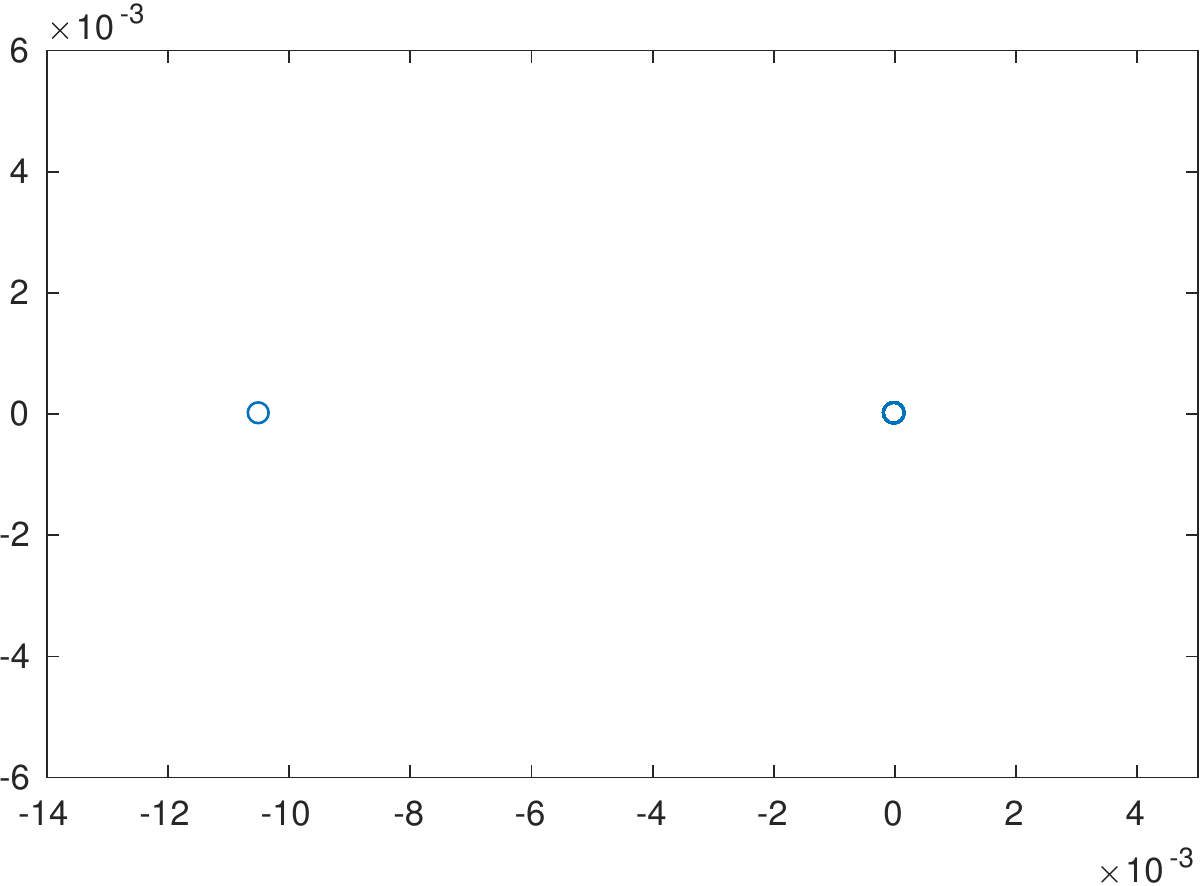}\\
\hfill $\hskip 25pt\mathrm{Re}(\mu)$ \hfill\hfill \hskip 15pt $\mathrm{Re}(\mu)$ \hfill\rule{0pt}{0pt}\\
\caption{Spectrum of \MG{the} \FK{iteration} matrix for $T = 100$, $L = 30$, $\Delta T/\Delta t = 50$,
 $\Delta t/\delta t=100$, $\alpha = 1$, and for $\sigma = -1/8, -1/4, -1/2, -1, -2, -16$, \FK{from top
 left to bottom right}.  \label{fig3.3}}
\end{figure}

\begin{figure}
\centering
\rotatebox{90}{\hskip 55pt $\mathrm{Im}(\mu)$}
\includegraphics[width=0.47\textwidth]{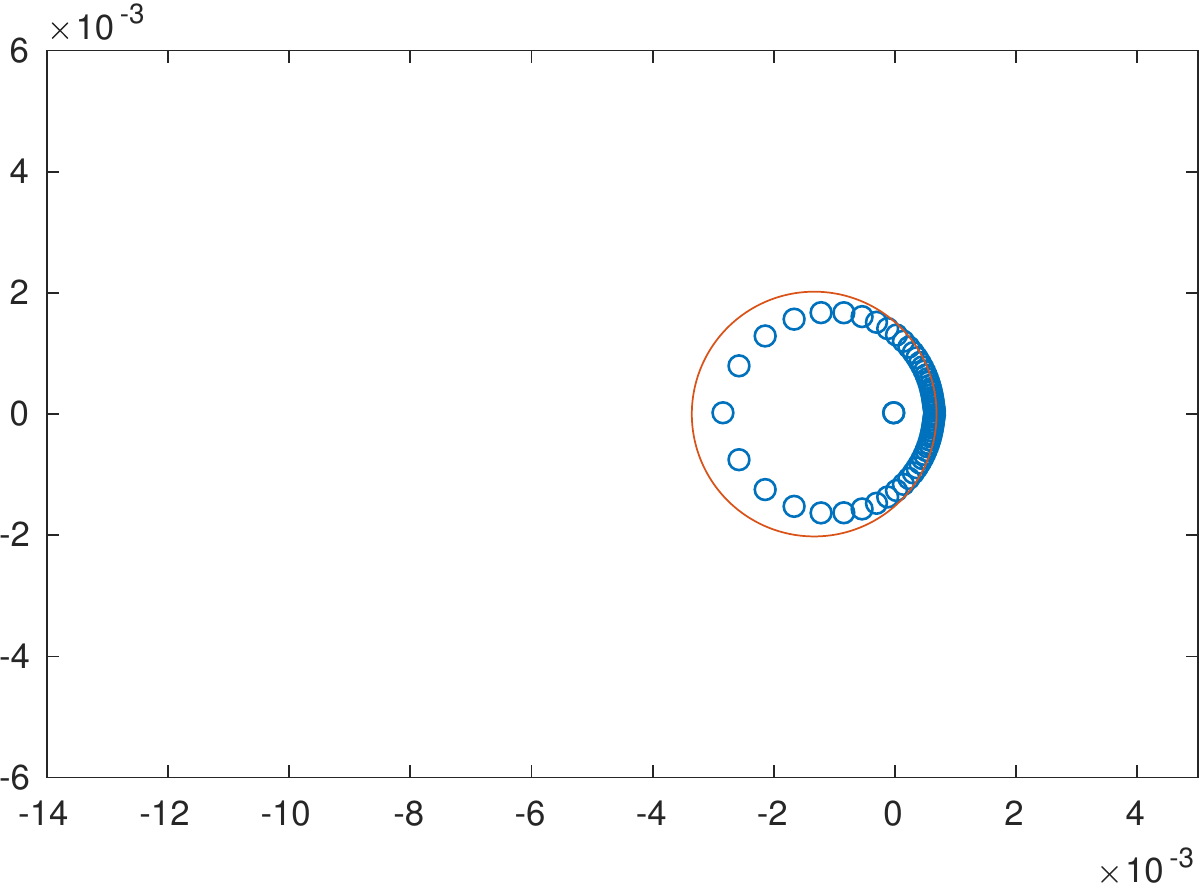}
\includegraphics[width=0.47\textwidth]{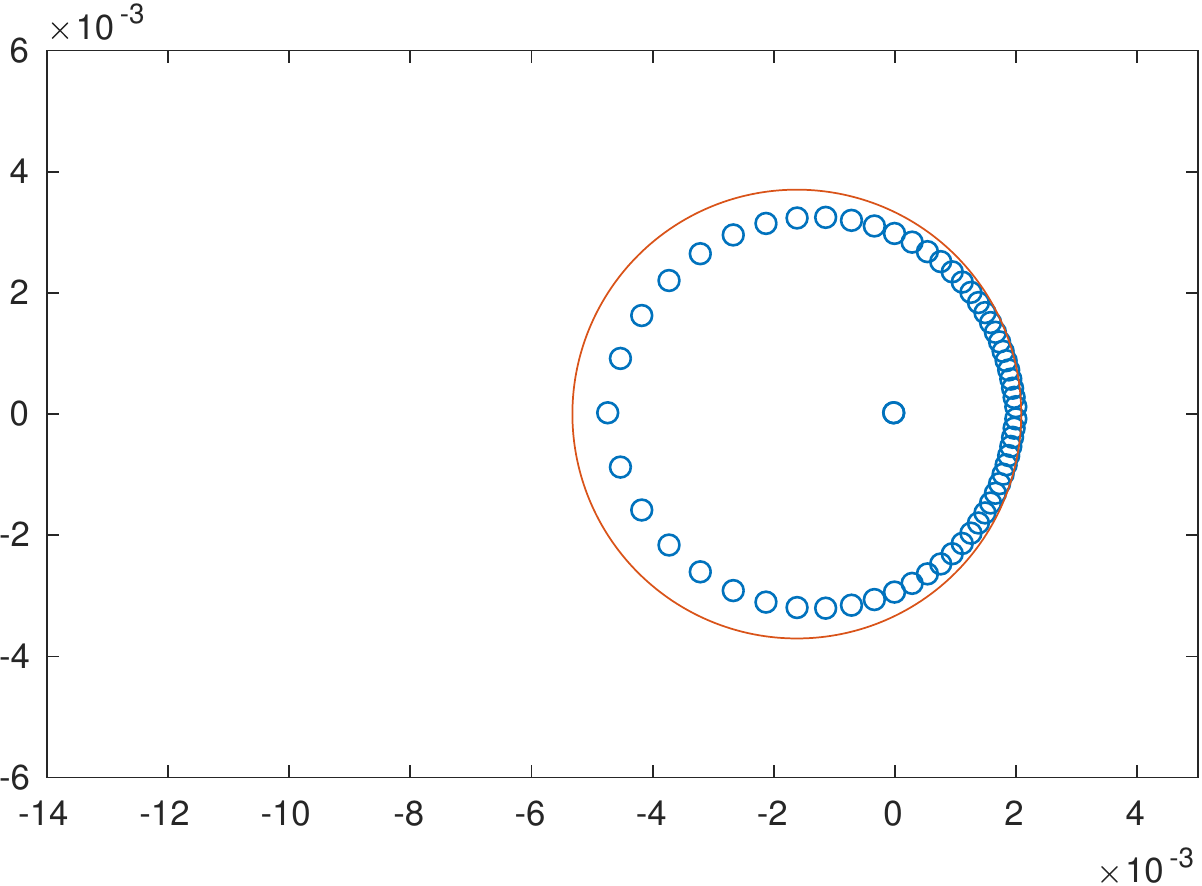}\\
\rotatebox{90}{\hskip 55pt $\mathrm{Im}(\mu)$}
\includegraphics[width=0.47\textwidth]{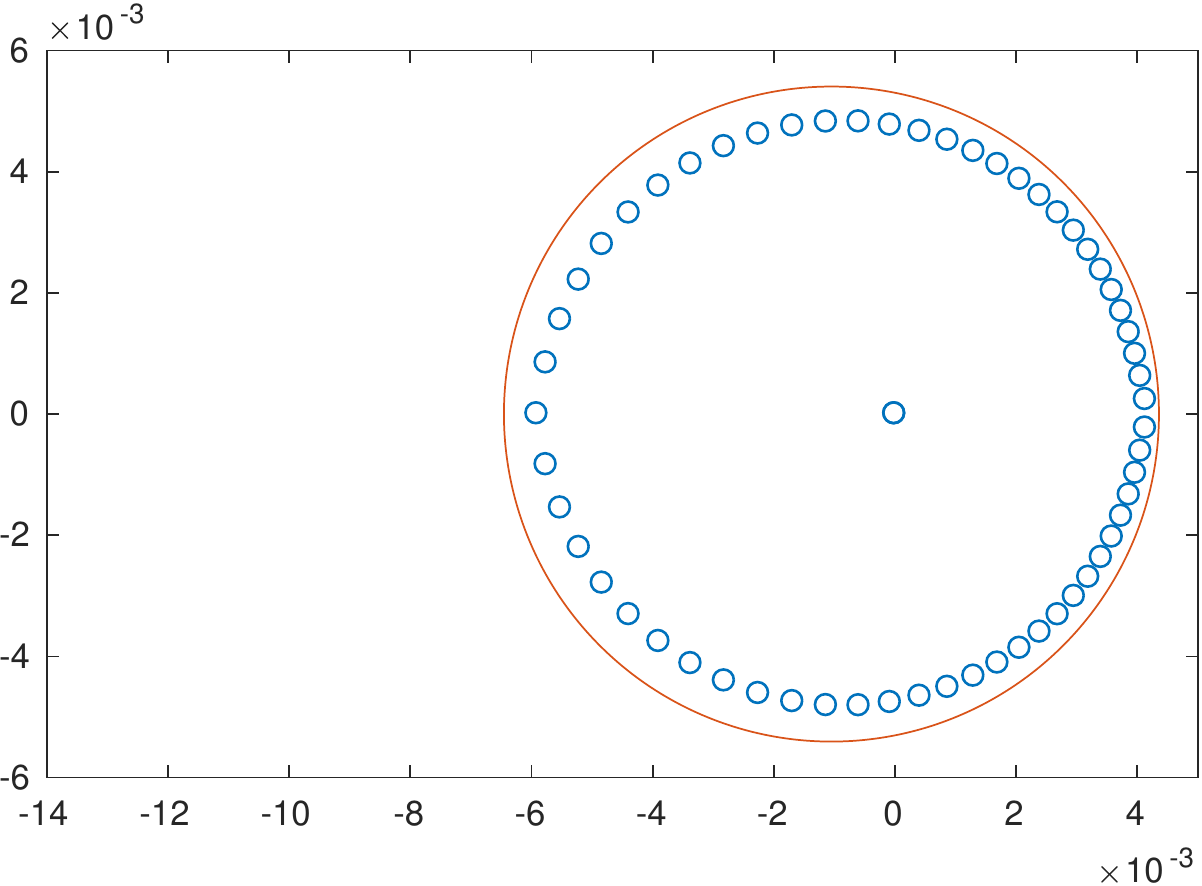}
\includegraphics[width=0.47\textwidth]{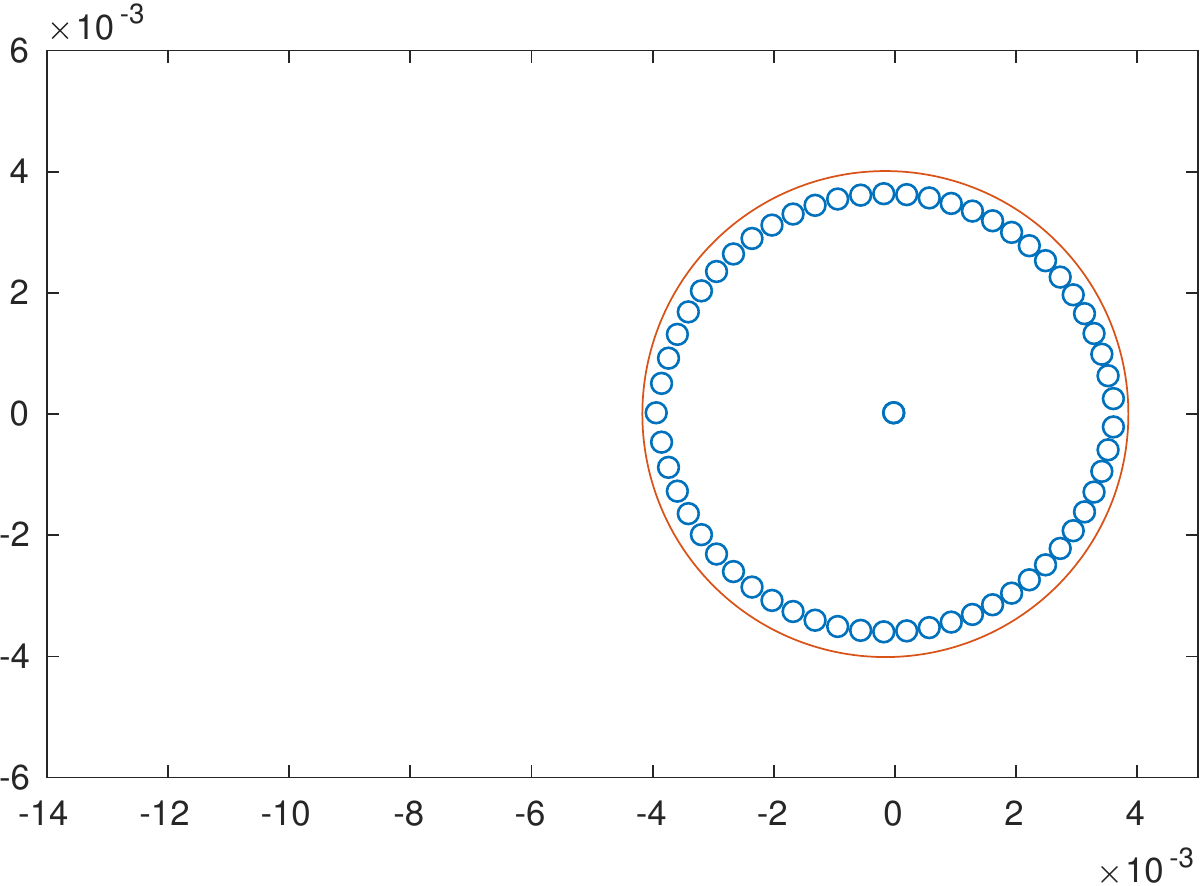}\\
\rotatebox{90}{\hskip 55pt $\mathrm{Im}(\mu)$}
\includegraphics[width=0.48\textwidth]{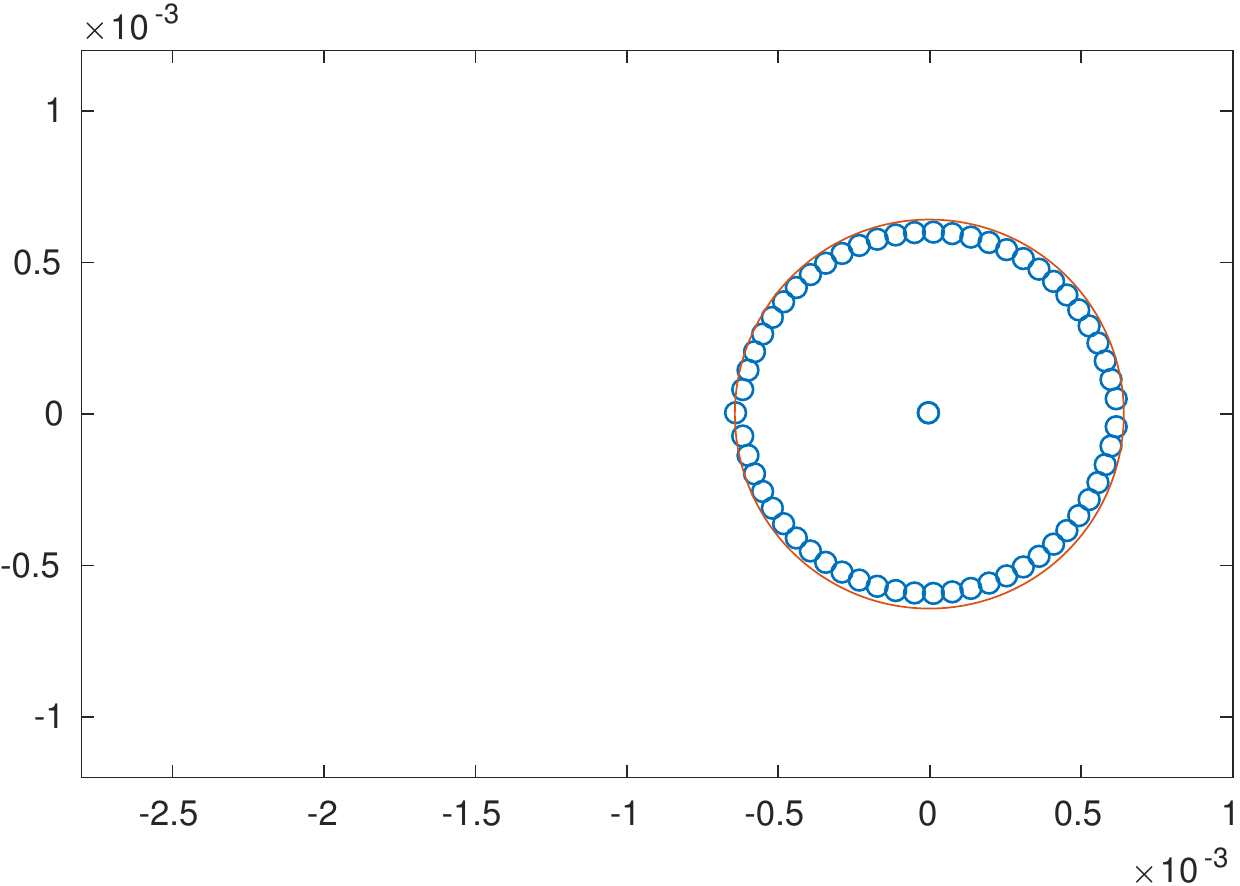}
\includegraphics[width=0.47\textwidth]{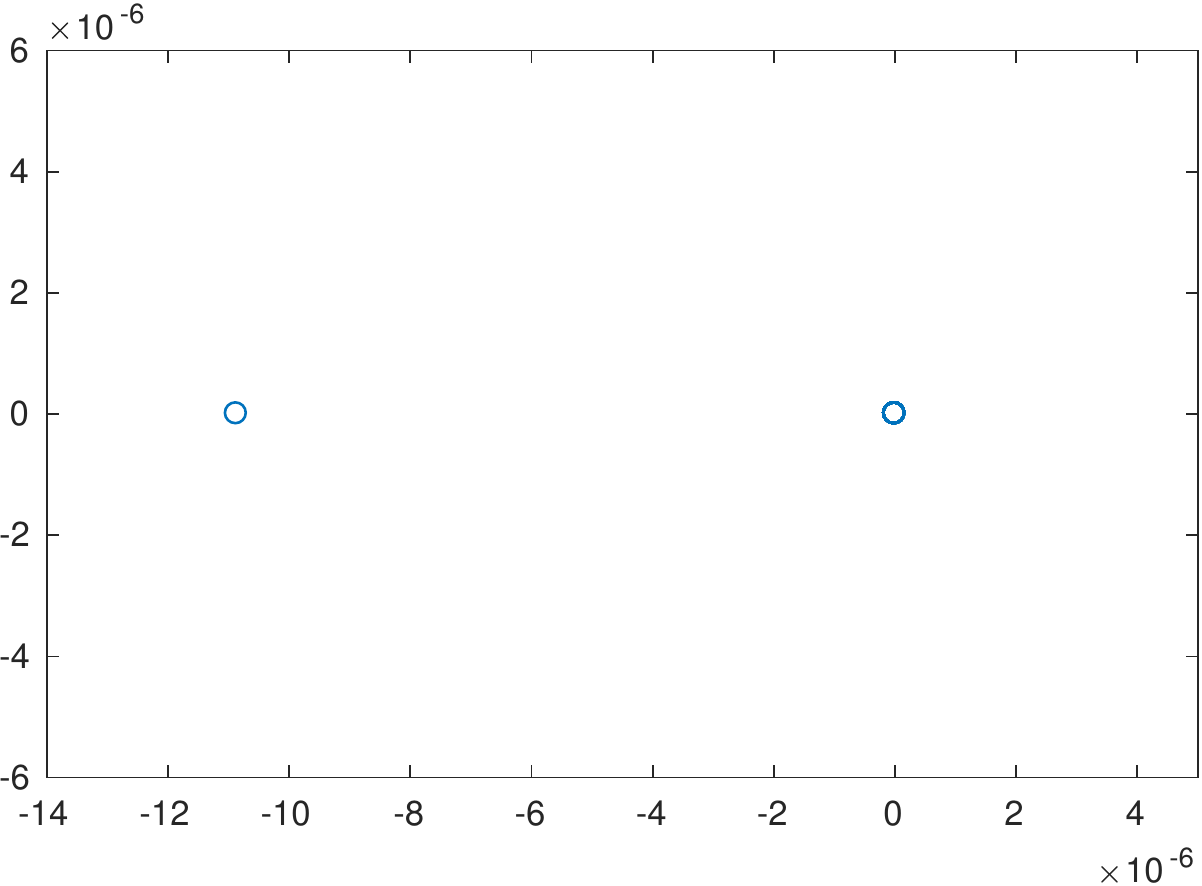}\\
\hfill $\hskip 25pt\mathrm{Re}(\mu)$ \hfill\hfill \hskip 15pt $\mathrm{Re}(\mu)$ \hfill\rule{0pt}{0pt}\\
\caption{Spectrum of \MG{the} \FK{iteration} matrix for $T = 100$, $L = 30$, $\Delta T/\Delta t = 50$,
 $\Delta t/\delta t=100$, $\alpha = 1000$, and for $\sigma = -1/8, -1/4, -1/2, -1, -2, -16$,
 \FK{from top left to bottom right}.
\label{fig3.4}}
\end{figure}

\begin{corollary}
Let $T$, $\Delta T$, $\Delta t$, $\delta t$, $\alpha$ and $\sigma$ be fixed. Then the spectral radius $\rho$ of the matrix $I-A_{\Delta t}^{-1}A_{\delta t}$ satisfies
\begin{equation}\label{mu_bound}
\rho \leq
\frac{|\delta\gamma|+\alpha\delta\beta(1+\beta)}{\gamma+\alpha(1-\beta^2)}.
\end{equation}
\end{corollary}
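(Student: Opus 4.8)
The plan is to read off the conclusion from the complete spectral characterization obtained in the previous subsections. Recall that the eigenvalues of $I-A_{\Delta t}^{-1}A_{\delta t}$ are exactly $\mu=0$ (with multiplicity two) together with the $2L-1$ roots of the polynomial $P$ in \eqref{poly_def}, and that by Theorem \ref{thm:spectral} every root of $P$ lies in $D_\sigma\cup\{\mu^*\}$. Hence the spectral radius $\rho$ is the maximum of three quantities: the modulus of the zero eigenvalue (which is $0$ and may be discarded), the largest modulus attained on $D_\sigma$, and $|\mu^*|$ when the isolated eigenvalue is present. I would bound each of these by the right-hand side of \eqref{mu_bound}. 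Note that $L=T/\Delta T$ is fixed once $T$ and $\Delta T$ are, so the only branching is whether or not $\mu^*$ exists.

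First I would bound the contribution of $D_\sigma$. By \eqref{Dsigma_def}, $D_\sigma$ is a disc centered at the real point $\mu_0=-\beta\delta\beta/(1-\beta^2)$, which is negative by Lemma \ref{lem:sign_of_parameters}, with radius $r=\delta\beta/(1-\beta^2)$. For a disc centered on the negative real axis the modulus is maximized at the leftmost point $\mu_0-r$, so every $\mu\in D_\sigma$ satisfies
$$ |\mu| \le |\mu_0| + r = \frac{\beta\delta\beta}{1-\beta^2} + \frac{\delta\beta}{1-\beta^2} = \frac{\delta\beta(1+\beta)}{1-\beta^2}. $$
For the isolated eigenvalue, Theorem \ref{thm:isolated} already gives $\mu^* > -\frac{|\delta\gamma|+\alpha\delta\beta(1+\beta)}{\gamma+\alpha(1-\beta^2)}$; since $\mu^*<0$, this says exactly that $|\mu^*|$ is less than the right-hand side of \eqref{mu_bound}. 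When $L\le\alpha L_0$ there is no such eigenvalue and this term is simply absent.

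The crux is therefore to show that the purely geometric disc bound $\delta\beta(1+\beta)/(1-\beta^2)$ is itself dominated by the right-hand side of \eqref{mu_bound}. Cross-multiplying (all denominators are positive by Lemma \ref{lem:sign_of_parameters}), the inequality
$$ \frac{\delta\beta(1+\beta)}{1-\beta^2} \le \frac{|\delta\gamma|+\alpha\delta\beta(1+\beta)}{\gamma+\alpha(1-\beta^2)} $$
simplifies, after cancelling the common term $\alpha\delta\beta(1+\beta)(1-\beta^2)$, to $\delta\beta(1+\beta)\gamma \le |\delta\gamma|(1-\beta^2)$, i.e.\ to $\delta\beta\,\gamma \le |\delta\gamma|(1-\beta)$, i.e.\ to $\beta+\gamma\delta\beta/|\delta\gamma|\le 1$. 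But the left-hand side is precisely the constant $C$ of \eqref{eq:C_def}, and Lemma \ref{constant_C} states $C<1$. Thus both the $D_\sigma$ contribution and $|\mu^*|$ are bounded by the right-hand side of \eqref{mu_bound}, and taking the maximum yields $\rho \le \frac{|\delta\gamma|+\alpha\delta\beta(1+\beta)}{\gamma+\alpha(1-\beta^2)}$. The only real content beyond assembling the earlier results is this final reduction, and I expect it to be the main (though short) obstacle: one must recognize that the geometric disc estimate collapses, via a single cross-multiplication, onto the already-established inequality $C<1$.
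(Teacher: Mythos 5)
Your proposal is correct and follows essentially the same route as the paper: bound $|\mu^*|$ via Theorem \ref{thm:isolated}, bound the disc contribution by $|\mu_0|+\mathrm{Radius}(D_\sigma)=\delta\beta/(1-\beta)$, and show the latter is dominated by the right-hand side of \eqref{mu_bound} by reducing, after cross-multiplication, to the inequality $C<1$ of Lemma \ref{constant_C}. The only (cosmetic) difference is that you bound both contributions uniformly and take the maximum, whereas the paper phrases it as a case split on $L>\alpha L_0$ versus $L\le\alpha L_0$; the underlying estimates are identical.
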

\FK{Note that the inequality \eqref{mu_bound} is valid for all $L>0$, i.e., regardless of whether the isolated eigenvalue $\mu^*$ exists.}
\begin{proof}
When the number of sub-intervals $L$ satisfies $L > \alpha L_0$, the spectral radius is determined by the isolated eigenvalue, which according to Theorem \ref{thm:isolated} is estimated by
$$ |\mu^*| < \frac{|\delta\gamma|+\alpha\delta\beta(1+\beta)}{\gamma+\alpha(1-\beta^2)}. $$
Otherwise, when $L \leq \alpha L_0$, all the eigenvalues lie within the bounding disc $D_\sigma$,
so no eigenvalue can be farther away from the origin than
$$ \mbox{Radius}(D_\sigma) + |\mbox{Center}(D_\sigma)| =
\frac{\delta\beta}{1-\beta^2} + \frac{\beta\delta\beta}{1-\beta^2} = \frac{\delta\beta}{1-\beta}. $$
A straightforward calculation shows that
$$ \frac{|\delta\gamma|+\alpha\delta\beta(1+\beta)}{\gamma+\alpha(1-\beta^2)}
> \frac{\delta\beta}{1-\beta} \quad \text{if and only if} \quad \beta + \frac{\gamma\delta\beta}{|\delta\gamma|} < 1,$$
which is true by Lemma \ref{constant_C}. \FK{Thus, the inequality \eqref{mu_bound} holds in both cases.}
%
%
\end{proof}


\begin{figure}
\centering
\includegraphics[width=0.45\textwidth]{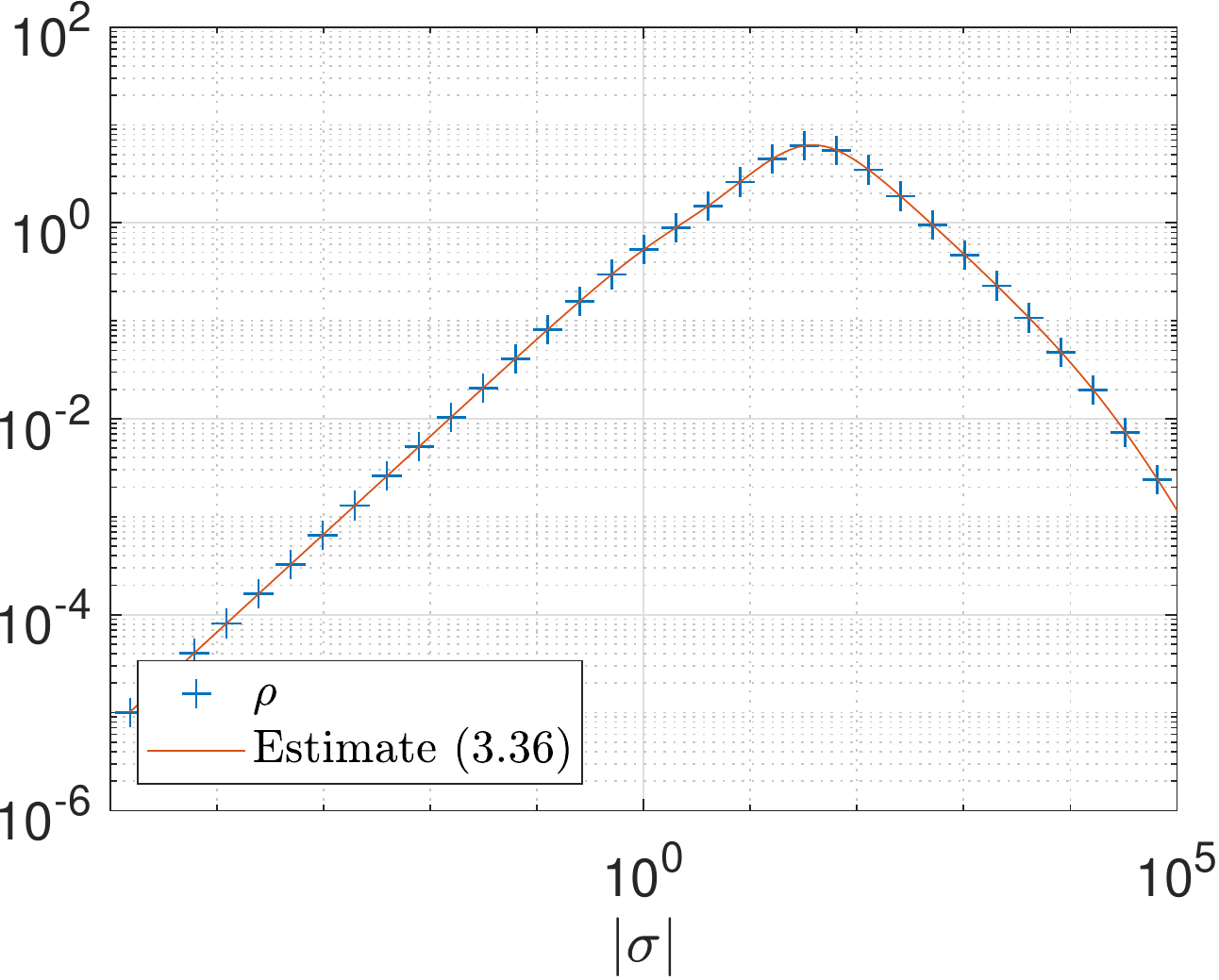}
\includegraphics[width=0.45\textwidth]{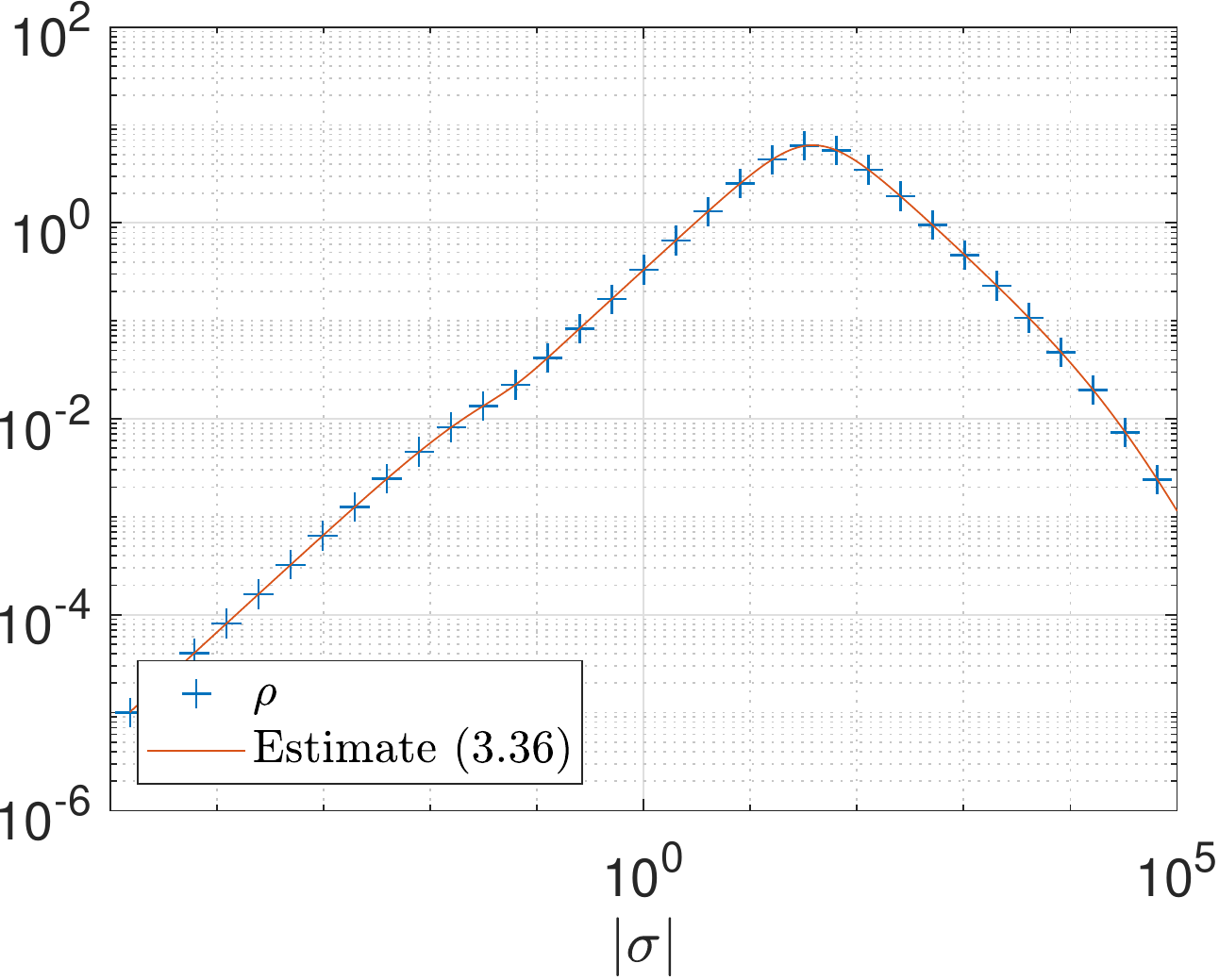}\\
\includegraphics[width=0.45\textwidth]{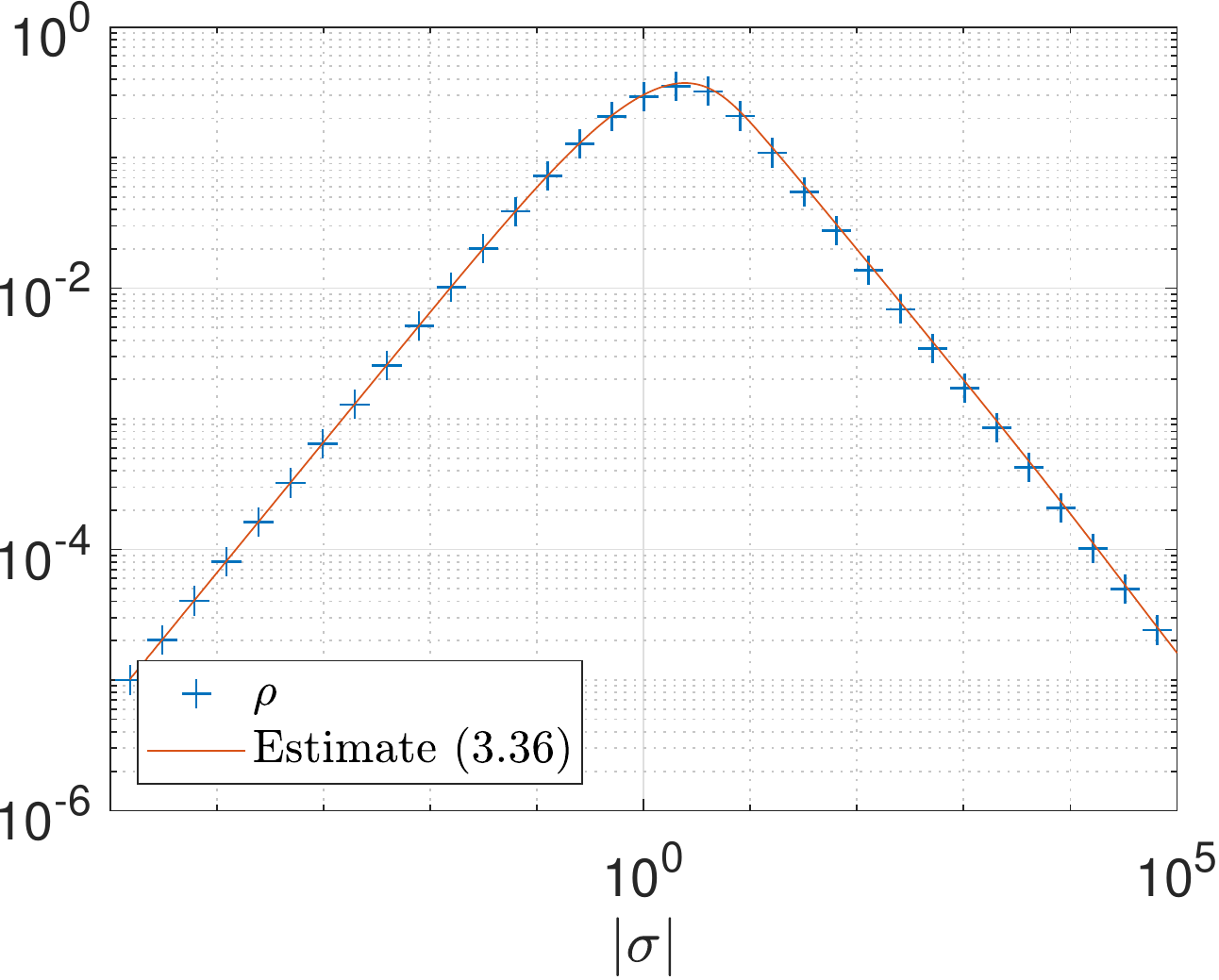}
\includegraphics[width=0.45\textwidth]{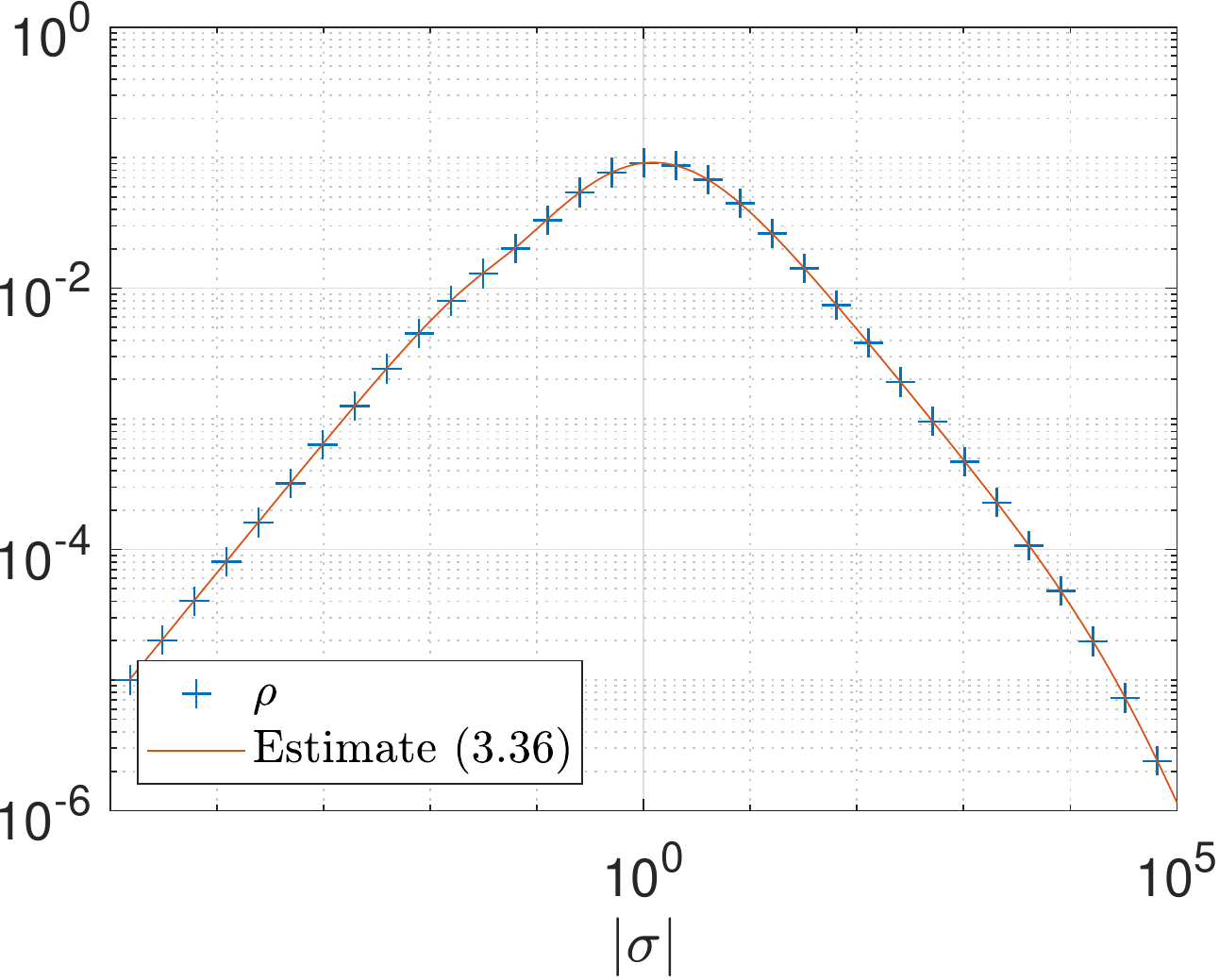}\\
\includegraphics[width=0.45\textwidth]{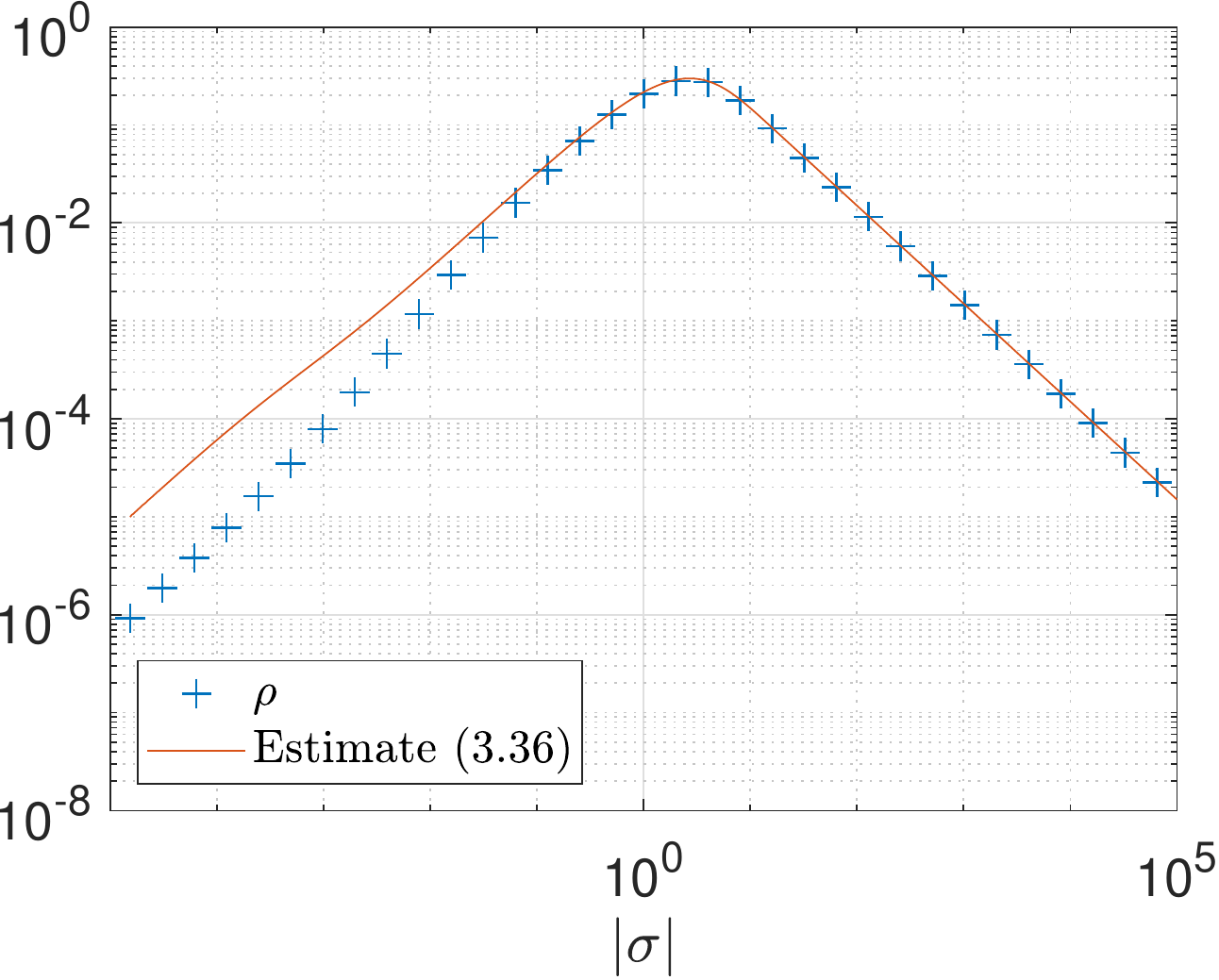}
\includegraphics[width=0.45\textwidth]{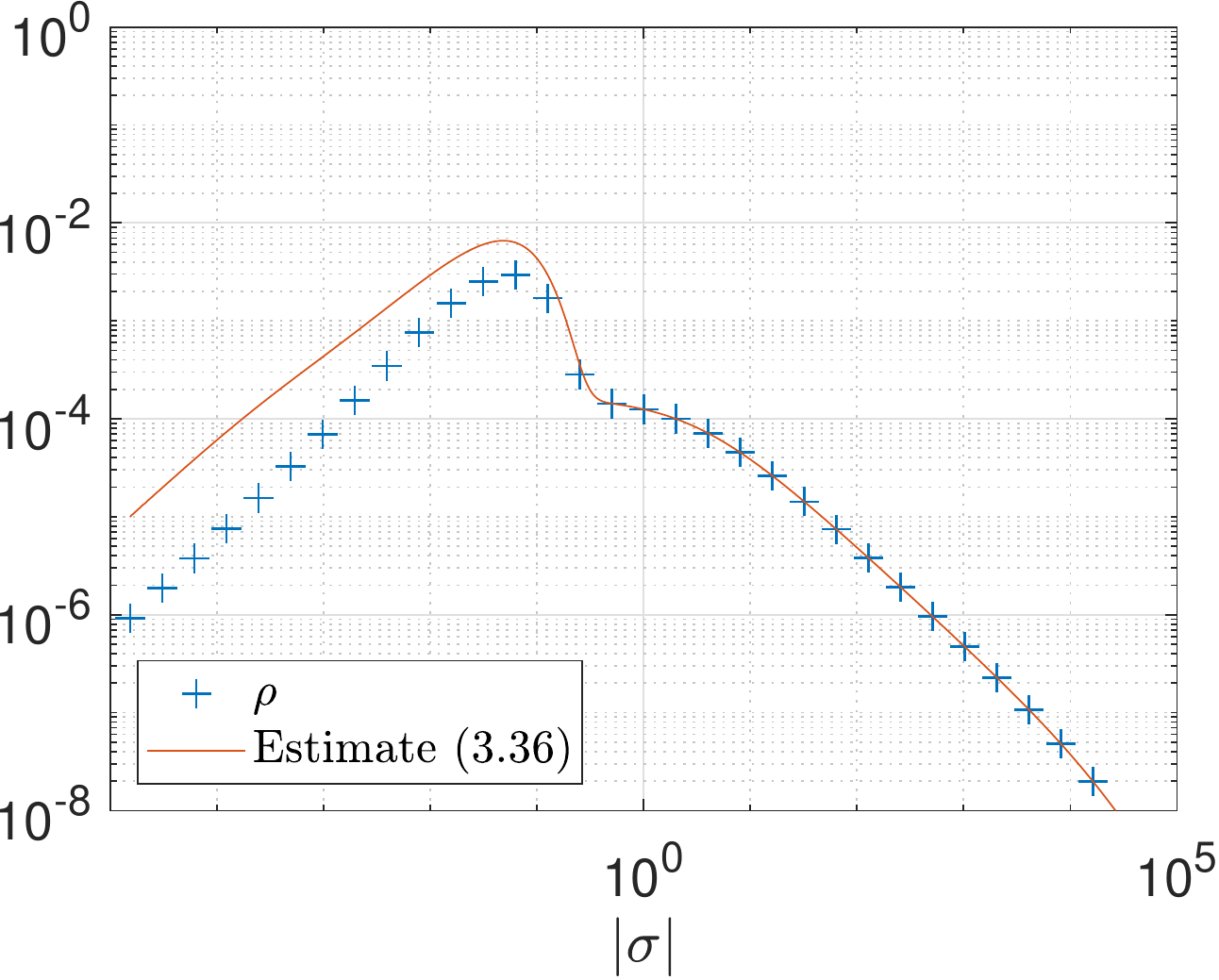}
\caption{Behaviour of $\mu_{\max}$ as a function of $\sigma$ for
$\alpha = 0.001, 1, 1000$ (top to bottom). Left: 150 subintervals, 1 coarse step per subinterval. Right: 3 subintervals, 50 coarse steps per subinterval. All examples use $T = 100$, $\Delta t = 2/3$ and $\Delta t/\delta t=10^{4}$. \label{fig:muhsal}}

\end{figure}

\FK{
The above corollary is of interest when we apply our ParaOpt method to a large system of ODEs (arising from the spatial discretization of a PDE, for example),
where the eigenvalues lie in the range $\sigma \in [-\sigma_{\max}, -\sigma_{\min}]$, with $\sigma_{\max} \to \infty$ when the
spatial grid is refined. As we can see from Figure \ref{fig:muhsal}, the upper bound follows the actual spectral radius rather
closely for most values of $\sigma$, and its maximum occurs roughly at the same value of $\sigma$ as
the one that maximizes the spectral radius. In the next two results,
we will use \FFK{the estimate \eqref{mu_bound} of}
\MG{the} spectral radius of $I-A_{\Delta t}^{-1}A_{\delta t}$ to derive a criterion for the convergence of the method.
}

\begin{lemma}\label{lem:dg_over_g}
Let $T$, $\Delta T$, $\Delta t$, $\delta t$ be fixed. Then for all $\sigma < 0$, we have
\begin{equation}\label{eq:dg_over_g}
\frac{|\delta\gamma|}{\gamma} \leq
1.58|\sigma|(\Delta t - \delta t), \qquad \frac{\delta\beta}{1-\beta} \leq 0.3.
\end{equation}
\end{lemma}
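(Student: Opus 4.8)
The plan is to reduce both estimates to the study of the single scalar function $\beta_h := (1+|\sigma|h)^{-\Delta T/h}$ of the time step $h>0$, out of which $\gamma_h = (1-\beta_h^2)/(|\sigma|(2+|\sigma|h))$ is built. Two structural facts drive everything. First, by Lemma~\ref{lem:sign_of_parameters}, $\beta_h$ is increasing and lies in $(0,1)$, so $\beta_{\delta t}<\beta_{\Delta t}=\beta$; second, since $\ln(1+t)<t$, one has $\ln\beta_h = -(\Delta T/h)\ln(1+|\sigma|h) > -|\sigma|\Delta T$, hence $\beta_h > e^{-|\sigma|\Delta T}$ for every $h$. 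Throughout I write $x=|\sigma|\Delta t$, $m=\Delta T/\Delta t\in\mathbb{N}$, $a=(1+x)^{-1}$ and $b=e^{-x}$, so that $\beta=a^m$, $e^{-|\sigma|\Delta T}=b^m$ and $0<b<a<1$.

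For the first estimate I would use $\beta_{\delta t}>e^{-|\sigma|\Delta T}=b^m$ to bound $\delta\beta=\beta-\beta_{\delta t}<\beta-b^m$, whence $\frac{\delta\beta}{1-\beta}<\frac{a^m-b^m}{1-a^m}$. The elementary inequality $\frac{a^n-b^n}{1-a^n}\le\frac{a-b}{1-a}$ for $0<b<a<1$ and $n\in\mathbb{N}$ (factor $a^n-b^n=(a-b)\sum_{j<n}a^{n-1-j}b^j\le(a-b)na^{n-1}$ and $1-a^n=(1-a)\sum_{j<n}a^j\ge(1-a)na^{n-1}$) collapses the exponent $m$ and leaves $\frac{\delta\beta}{1-\beta}<\frac{a-b}{1-a}=\frac{1-(1+x)e^{-x}}{x}=:h(x)$. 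It then remains to check $\sup_{x>0}h(x)<0.3$, i.e.\ that $\varphi(x):=1+x-(1-0.3x)e^x>0$ on $(0,\tfrac{10}{3}]$ (for $x>\tfrac{10}{3}$ it is trivial); this is a one-variable estimate whose only subtlety is that the minimum of $\varphi$ is small but strictly positive.

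For the second estimate I would write $\frac{|\delta\gamma|}{\gamma}=\frac{\gamma_{\delta t}}{\gamma_{\Delta t}}-1=(F_1-1)F_2+(F_2-1)$ with $F_1=\frac{1-\beta_{\delta t}^2}{1-\beta^2}\ge1$ and $F_2=\frac{2+|\sigma|\Delta t}{2+|\sigma|\delta t}\ge1$. Here $F_2-1=\frac{|\sigma|(\Delta t-\delta t)}{2+|\sigma|\delta t}\le\tfrac12|\sigma|(\Delta t-\delta t)$ directly. For $F_1-1=\frac{\beta^2-\beta_{\delta t}^2}{1-\beta^2}$ I would prepare two bounds. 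Applying the factorization of the previous paragraph to the exponent $2m$ gives the \emph{uniform} bound $F_1-1<h(x)<0.3$. A second, \emph{linear} bound comes from the monotonicity claim that $\Psi(h):=(1-\beta_h^2)(2+|\sigma|h)$ is nondecreasing in $h$; this is equivalent to $\frac{d}{dh}\beta_h^2\le|\sigma|^2\gamma_h$, and integrating it over $[\delta t,\Delta t]$ together with $\gamma_h\le\gamma_{\delta t}$ yields $\beta^2-\beta_{\delta t}^2\le|\sigma|^2\gamma_{\delta t}(\Delta t-\delta t)$, i.e.\ after dividing by $1-\beta^2$ and using $|\sigma|^2\gamma_{\delta t}=\frac{|\sigma|(1-\beta_{\delta t}^2)}{2+|\sigma|\delta t}$, the compact relation $F_1-1\le F_1(F_2-1)$.

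Finally I would combine these by splitting on $u:=|\sigma|(\Delta t-\delta t)$. When $u$ is not small, the uniform bound gives $\frac{|\delta\gamma|}{\gamma}<0.3F_2+(F_2-1)=0.3+1.3(F_2-1)\le0.3+0.65\,u$, which is $\le1.58\,u$ once $u$ exceeds the threshold where $0.3+0.65u=1.58u$. When $u$ is small the relation $F_1-1\le F_1(F_2-1)$ rearranges to $F_1(2-F_2)\le1$, hence $\frac{|\delta\gamma|}{\gamma}=F_1F_2-1\le\frac{2(F_2-1)}{2-F_2}=\frac{2e}{1-e}$ with $e:=F_2-1\le\tfrac{u}{2}$; since $e$ is then bounded away from $1$, this is again $\le1.58\,u$, and balancing the two regimes is exactly what fixes the constant. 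The main obstacle is not this bookkeeping but the two tight scalar inequalities underlying it, namely $\sup_x h(x)<0.3$ and, above all, the monotonicity of $\Psi$ (equivalently $2\nu a^{2\nu}(a-1-\ln a)(1+a)\le(1-a)(1-a^{2\nu})$ for $a\in(0,1)$, $\nu\ge1$); both are tight in the limit $h\to0$ / $a\to1$ and therefore require a careful expansion rather than a crude estimate, so I would relegate them to an appendix, as is done for~\eqref{technical_estimate}.
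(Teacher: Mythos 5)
Your proposal is correct, but it takes a genuinely different route from the paper's proof. The paper proves the first estimate by splitting $\frac{|\delta\gamma|}{\gamma|\sigma|(\Delta t-\delta t)}$ \emph{additively} into two terms $A+B$ via an exact algebraic identity, bounding $A\le 0.58$ using the extreme bounds $e^{-|\sigma|\Delta T}\le\beta_{\delta t}\le\beta\le(1+|\sigma|\Delta T)^{-1}$ plus a one-variable maximization, and bounding $B\le 1$ by the mean value theorem applied to $\tau\mapsto(1+|\sigma|\tau)^{-2\Delta T/\tau}$ together with $\ln(1+x)\le x$; the constant is then simply $0.58+1=1.58$, with no case analysis. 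You instead use the \emph{multiplicative} splitting $F_1F_2-1$ and combine a uniform bound $F_1-1<0.3$, obtained from the discrete collapsing inequality $\frac{a^n-b^n}{1-a^n}\le\frac{a-b}{1-a}$, with a linear bound $F_1-1\le F_1(F_2-1)$ obtained from the claimed monotonicity of $\Psi(h)=(1-\beta_h^2)(2+|\sigma|h)$, and you fix the constant by balancing the two regimes in $u=|\sigma|(\Delta t-\delta t)$; since the regimes $u\ge 0.3/0.93\approx 0.323$ and $u\le 2(1-1/1.58)\approx 0.734$ overlap, your case split does cover all $u>0$. The same collapsing inequality gives you the second estimate, at the price of requiring $\Delta T/\Delta t\in\N$ (true in the paper's setting, but not needed by the paper's argument, which uses monotonicity of $h_{\Delta T}$ instead). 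Both of your deferred claims are in fact true: $\sup_{x>0}\frac{1-(1+x)e^{-x}}{x}\approx 0.2984<0.3$ is exactly the one-variable function the paper also maximizes (evaluated at $|\sigma|\Delta T$ there, at $|\sigma|\Delta t$ in your version); and your $\Psi$-inequality $2\nu a^{2\nu}(a-1-\ln a)(1+a)\le(1-a)(1-a^{2\nu})$ holds because $(1-a^{2\nu})/(2\nu a^{2\nu})=(e^{2\nu t}-1)/(2\nu)$ with $t=-\ln a$ is increasing in $\nu$, which reduces the claim to $\nu=1$, where after substituting $a=(1+z)^{-1}$ it becomes $\ln(1+z)\le\frac{z}{1+z}+\frac{z^2}{2}$ — provable by one differentiation, and a natural companion to the paper's Lemma~\ref{log}. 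The trade-off: the paper's mean value theorem step yields the linear control of $\beta^2-\beta_{\delta t}^2$ in one stroke, so its proof is shorter and avoids your hardest estimate (the $\Psi$-inequality, which as you note is tight to second order as $a\to 1$); your route buys a purely algebraic treatment of the second estimate and the structural relation $F_1(2-F_2)\le 1$, but at the cost of the case analysis and an extra appendix lemma.
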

\begin{proof}
To bound $|\delta\gamma|/\gamma$, \FFK{we start by bounding a scaled version of the quantity.} We first use the definition of $\gamma$  \FFK{and $\gamma_{\delta t}$} (cf.~\eqref{eq:gamma_def}) to obtain
\begin{align*}
\frac{|\delta \gamma|}{\gamma}\cdot \frac{1}{|\sigma|(\Delta t - \delta t)} &= 
\frac{\gamma_{\delta t} - \gamma}{\gamma|\sigma|(\Delta t-\delta t)} \\
&=\frac{2+|\sigma|\Delta t}{(1-\beta^2)|\sigma|(\Delta t - \delta t)}
\left(\frac{1-\beta_{\delta t}^2}{2+|\sigma|\delta t} - \frac{1-\beta^2}{2+|\sigma|\Delta t}\right)\\
&= \frac{1-\beta_{\delta t}^2}{(2+\sigma\delta t)(1-\beta^2)} +
\frac{\beta^2-\beta_{\delta t}^2}{|\sigma|(\Delta t - \delta t)(1-\beta^2)} \FFK{=: A + B}.
\end{align*}
To estimate the terms \FFK{$A$ and $B$} above, we define the mapping
$$ h_{\Delta T}(\tau) := (1+|\sigma|\tau)^{-\Delta T/\tau}, $$
so that $\beta = h_{\Delta T}(\Delta t)$, $\beta_{\delta t} = h_{\Delta T}(\delta t)$.
Using the fact that $\ln(1+x) > \frac{x}{1+x}$ for $x > 0$ (see Lemma \ref{log} in Appendix \ref{appendix}),
we see that
$$h'_{\Delta T}(\tau) = h_{\Delta T}(\tau)\left[\frac{\Delta T}{\tau^2}\ln(1+|\sigma|\tau) - \frac{|\sigma|\Delta T}{\tau(1+|\sigma|\tau)}\right] > 0, $$
so $h_{\Delta T}$ is increasing. Therefore, we have
\begin{equation}\label{beta_ineq}
\lim_{\tau\to 0} h_{\Delta T}(\tau) = e^{-|\sigma|\Delta T} \leq \beta_{\delta t} \leq \beta \leq \frac{1}{1+|\sigma|\Delta T} = h_{\Delta T}(\Delta T).
\end{equation}
It then follows that
$$ \FFK{A :=} \frac{1-\beta_{\delta t}^2}{(2+\sigma\delta t)(1-\beta^2)} \leq
\frac{1-e^{-2|\sigma|\Delta T}}{(2+\sigma\delta t)(1-(1+|\sigma|\Delta T)^{-2})}
\leq \frac{(1-e^{-2|\sigma|\Delta T})(1+|\sigma|\Delta T)^2}{2|\sigma|\Delta T(2+|\sigma|\Delta T)}.$$
The last quotient is a function in $|\sigma|\Delta T$ only, whose maximum over all $|\sigma|\Delta T > 0$
is approximately $0.5773 < 0.58$; \FFK{therefore, we have
$$A \leq 0.58. $$
}
For the second term, we use the mean value theorem and the fact that $\beta^2 =
h_{2\Delta T}(\Delta t)$, $\beta_{\delta t}^2 = h_{2\Delta T}(\delta t)$ to obtain
$$ \beta^2 - \beta_{\delta t}^2 = (\Delta t - \delta t)h'_{2\Delta T}(\tau^*) $$
for some $\delta t < \tau^* < \Delta t$, with
$$h'_{2\Delta T}(\tau) = h_{2\Delta T}(\tau)\left[\frac{2\Delta T}{\tau^2}\ln(1+|\sigma|\tau) - \frac{2|\sigma|\Delta T}{\tau(1+|\sigma|\tau)}\right]. $$
Using the fact that $\ln(1+x) \leq x$ for all $x \geq 0$, we deduce that
$$ h'_{2\Delta T}(\tau^*) \leq h_{2\Delta T}(\tau^*) \frac{2|\sigma|^2\Delta T}{1+|\sigma|\tau^*}
\leq \frac{2\beta^2|\sigma|^2\Delta T}{1+|\sigma|\delta t}, $$
so that
$$ \FFK{B:=}\frac{\beta^2-\beta_{\delta t}^2}{|\sigma|(\Delta t - \delta t)(1-\beta^2)}
\leq \frac{2|\sigma|\Delta T}{(1+|\sigma|\Delta T)^2}\cdot\frac{(1+|\sigma|\Delta T)^2}{|\sigma|\Delta T(2+|\sigma|\Delta T)}\leq 1.$$
\FFK{Combining the estimates for $A$ and $B$ and multiplying by $|\sigma|(\Delta t - \delta t)$} gives the first inequality in \eqref{eq:dg_over_g}. For the second
inequality, we use \eqref{beta_ineq} to obtain
$$ \frac{\beta - \beta_{\delta t}}{1-\beta} \leq \frac{(1+|\sigma|\Delta T)^{-1} - e^{-|\sigma|\Delta T}}{1-(1+|\sigma|\Delta T)^{-1}} = \frac{1 - (1+|\sigma|\Delta T)e^{-|\sigma|\Delta T}}{|\sigma|\Delta T}. $$
This is again a function in a single variable $|\sigma|\Delta T$, whose maximum over all $|\sigma|\Delta T > 0$ is approximately $0.2984 < 0.3$.
\end{proof}

\begin{theorem}\label{lin_conv_thm}
Let $\Delta T$, $\Delta t$, $\delta t$ and $\alpha$ be fixed. Then for all $\sigma < 0$, the spectral radius of $I-A_{\Delta t}^{-1}A_{\delta t}$ satisfies
\begin{equation}\label{eq:spec_radius}
\max_{\sigma < 0} \rho(\sigma) \leq \frac{0.79\Delta t}{\alpha+\sqrt{\alpha\Delta t}} + 0.3.
\end{equation}
Thus, if $\alpha > \FFK{0.4544}\Delta t$, then the linear \FK{ParaOpt} algorithm \eqref{eq:scheme} converges.
\end{theorem}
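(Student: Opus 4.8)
The plan is to derive \eqref{eq:spec_radius} by feeding the two estimates of Lemma~\ref{lem:dg_over_g} into the spectral-radius bound of the preceding Corollary,
$$\rho(\sigma) \leq \frac{|\delta\gamma|+\alpha\delta\beta(1+\beta)}{\gamma+\alpha(1-\beta^2)},$$
and then maximizing the resulting quantity over $\sigma<0$ explicitly. First I would split the right-hand side into the two fractions sharing the denominator $\gamma+\alpha(1-\beta^2)$, handle each separately, and recombine. The additive constant $0.3$ and the $\sigma$-dependent term will come from the two pieces.

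For the piece involving $\delta\beta$, I would drop the nonnegative $\gamma$ from the denominator (justified because $\gamma>0$ by Lemma~\ref{lem:sign_of_parameters}) to obtain
$$\frac{\alpha\delta\beta(1+\beta)}{\gamma+\alpha(1-\beta^2)} \leq \frac{\alpha\delta\beta(1+\beta)}{\alpha(1-\beta^2)} = \frac{\delta\beta}{1-\beta} \leq 0.3,$$
where the last inequality is precisely the second estimate of Lemma~\ref{lem:dg_over_g}. This contribution is already independent of $\sigma$, so it accounts directly for the $0.3$ in the claimed bound.

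The real work lies in the piece involving $\delta\gamma$. I would bound $|\delta\gamma| \leq 1.58|\sigma|\,\Delta t\,\gamma$ using the first estimate of Lemma~\ref{lem:dg_over_g} together with $\Delta t-\delta t<\Delta t$. The crucial algebraic observation is that the definition \eqref{eq:gamma_def} of $\gamma$ yields the identity $(1-\beta^2)/\gamma = |\sigma|(2+|\sigma|\Delta t)$, so that
$$\frac{|\delta\gamma|}{\gamma+\alpha(1-\beta^2)} \leq \frac{1.58|\sigma|\,\Delta t}{1+\alpha|\sigma|(2+|\sigma|\Delta t)} =: g(|\sigma|).$$
This collapses all the $\sigma$-dependence into a single rational function $g(x)=1.58\Delta t\,x/(1+2\alpha x+\alpha\Delta t\,x^2)$, free of $\beta$ and $\gamma$, which I would maximize over $x=|\sigma|>0$ by elementary calculus: its derivative vanishes exactly when $\alpha\Delta t\,x^2=1$, i.e.\ at $x^\ast=(\alpha\Delta t)^{-1/2}$, and substituting back gives the closed form $g(x^\ast)=0.79\Delta t/(\alpha+\sqrt{\alpha\Delta t})$. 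Adding this to the $0.3$ from the previous paragraph produces \eqref{eq:spec_radius}.

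For the convergence assertion I would impose $\rho<1$, which via \eqref{eq:spec_radius} reduces to $0.79\Delta t < 0.7(\alpha+\sqrt{\alpha\Delta t})$. Substituting $\alpha=c\Delta t$ and setting $u=\sqrt{c}$ converts this into the quadratic inequality $u^2+u>0.79/0.7$; solving for its positive root gives $c>0.4544$, which is the stated threshold $\alpha>0.4544\Delta t$. I expect the main obstacle to be the maximization of $g$ over $\sigma$: the whole argument hinges on the factorization $(1-\beta^2)/\gamma=|\sigma|(2+|\sigma|\Delta t)$, which is what reduces a bound depending on the four coupled parameters $\beta,\gamma,\delta\beta,\delta\gamma$ to a tractable one-variable optimization whose maximizer and maximum are available in closed form and match the claimed constants exactly.
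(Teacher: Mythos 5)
Your proposal is correct and follows essentially the same route as the paper's proof: both start from the spectral-radius bound of the Corollary, use the identity $(1-\beta^2)/\gamma = |\sigma|(2+|\sigma|\Delta t)$ together with the two estimates of Lemma~\ref{lem:dg_over_g} to reduce the bound to $\frac{1.58|\sigma|\Delta t}{1+\alpha|\sigma|(2+|\sigma|\Delta t)} + 0.3$, maximize the one-variable rational function at $|\sigma| = 1/\sqrt{\alpha\Delta t}$, and solve the resulting quadratic in $\sqrt{\alpha}$ to get the threshold $0.4544\,\Delta t$. The only cosmetic difference is that you split the fraction and drop $\gamma$ from the denominator of the $\delta\beta$ term, whereas the paper divides numerator and denominator by $\gamma$ first; these manipulations are equivalent.
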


\begin{proof}
\FFK{Starting with the}
spectral radius estimate \eqref{mu_bound}, \FFK{we divide the numerator and denominator by $\gamma$, then substitute its definition in \eqref{eq:gamma_def} to obtain} 
\begin{align*}
\rho(\sigma) &< \frac{|\delta\gamma|+ \alpha\delta\beta(1+\beta)}{\gamma+\alpha(1-\beta^2)}
= \frac{\frac{|\delta\gamma|}{\gamma} + \frac{\delta\beta}{1-\beta}\alpha|\sigma|(2+|\sigma|\Delta t)}{1+\alpha|\sigma|(2+|\sigma|\Delta t)}\\
&\leq \frac{|\delta\gamma|}{\gamma(1+\alpha|\sigma|(2+|\sigma|\Delta t)\FK{)}}+ \frac{\delta\beta}{1-\beta}.
\end{align*}
Now, by Lemma \ref{lem:dg_over_g}, the first term is bounded above by
$$ f(\sigma) := \frac{1.58|\sigma|\Delta t}{1+\alpha|\sigma|(2+|\sigma|\Delta t)}, $$
whose maximum occurs at $\sigma^* = -1/\sqrt{\alpha\Delta t}$ with
$$ f(\sigma^*) = \frac{0.79\Delta t}{\sqrt{\alpha\Delta t} + \alpha}. $$
Together with the estimate on $\delta\beta/(1-\beta)$ in Lemma \ref{lem:dg_over_g}, this proves \eqref{eq:spec_radius}. Thus, a sufficient condition for the method \eqref{eq:scheme} to converge
can be obtained by solving the inequality
$$ \frac{0.79\Delta t}{\alpha+\sqrt{\alpha\Delta t}} + 0.3 < 1. $$
This is a quadratic equation in $\sqrt{\alpha}$; solving it leads to $\alpha > \FFK{0.4544}\Delta t$, as required.
\end{proof}
$\quad$\\
\par
In Figure~\ref{AlphaAsymptotics}, we show the maximum spectral radius of $I-A_{\Delta t}^{-1}A_{\delta t}$ over all negative $\sigma$ for different values of $\alpha$ for a model decomposition with
$T = 100$, 30 subintervals, one coarse time step per sub-interval, and a refinement ratio of $10^4$
between the coarse and fine grid. We see in this case that the estimate \eqref{eq:spec_radius} is indeed
quite accurate.\\

\begin{figure}
\centering
\includegraphics[width=0.7\textwidth]{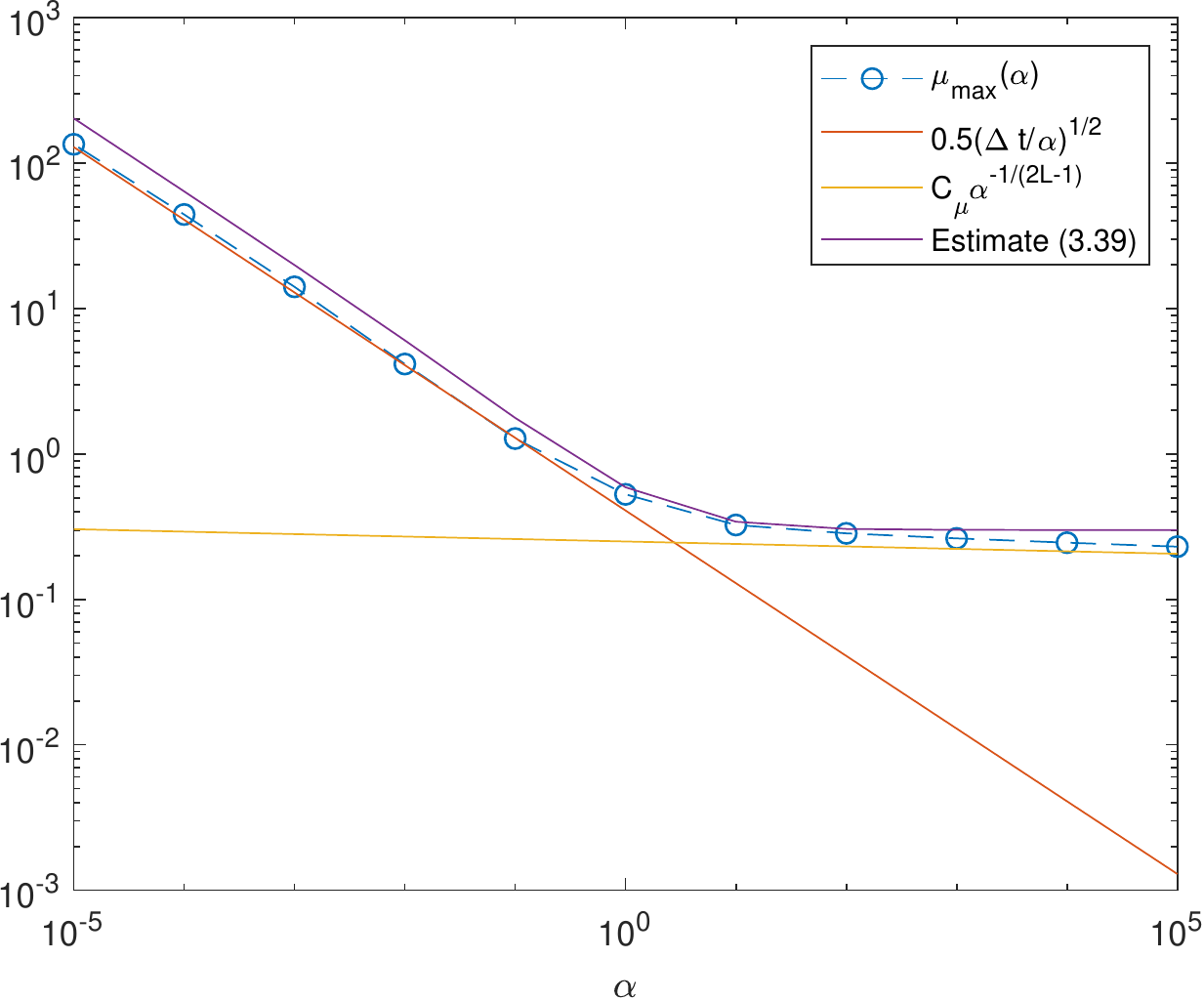}
\caption{Behaviour of $\max_{\sigma < 0} \rho(\sigma)$  as a function of $\alpha$, $T = 100$, $L = 30$, $\Delta T = \Delta t$, $\Delta t/\delta t =10^{-4}$. \FFK{The data for $\mu_{\max}(\alpha)$ has been generated by solving the generalized eigenvalue problem \eqref{eq:eig_prob} using {\tt eig} in {\sc Matlab}}. \label{AlphaAsymptotics}}

\end{figure}

\par \noindent \emph{Remarks.}
\begin{enumerate}[leftmargin=*]
\item (Dependence on $\alpha$) Theorem \ref{lin_conv_thm} states that in order to guarantee convergence, one should make sure that the coarse time step $\Delta t$ is sufficiently small relative to $\alpha$. In that case, the method converges.
\item (Weak scalability)
 Note that the estimate \eqref{eq:spec_radius} depends on the coarse time step $\Delta t$, but
 not explicitly on the number of sub-intervals $L$.
One may then consider weak scalability, \JS{ i.e. cases where the problem size per processor is fixed\footnote{On the contrary, strong scalability deals with cases where the total problem size is fixed.}},
under two different regimes: (i) keeping the sub-interval length
$\Delta T$ and refinement ratios $\Delta T/\Delta t$, $\Delta t/\delta t$ fixed, such that adding
subintervals increases the overall time horizon $T = L\Delta T$; and
(ii) keeping the time horizon $T$ fixed and refinement ratios $\Delta T/\Delta t$, $\Delta t/\delta t$ fixed, such that adding sub-intervals decreases their length $\Delta T = T/L$. In the first case, $\Delta t$
remains fixed, so the bound \eqref{eq:spec_radius} remains bounded as $L\to\infty$. In the second case, $\Delta t \to 0$ as $L\to \infty$, so in fact \eqref{eq:spec_radius} decreases to $0.3$ as $L\to \infty$.
Therefore, the method is weakly scalable \emph{under both regimes}.
\item (Contraction rate for high and low frequencies)
Let $\alpha > 0$ be fixed, and let $\rho(\sigma)$ be the spectral radius of $I - A_{\Delta t}^{-1}A_{\delta t}$ as a function of $\sigma$ given by \eqref{mu_bound}. Then for $\Delta t/\delta t \geq 2$,
an asymptotic expansion shows that we have
$$ \rho(\sigma) = \begin{cases}
|\sigma|(\Delta t - \delta t) + O(|\sigma|^2) & \text{as $|\sigma| \to 0$},\\
\frac{1}{|\sigma|\Delta t}+O(|\sigma|^{-2}) & \text{as $|\sigma|\to \infty$ if $\Delta T = \Delta t$,}\\
\frac{1}{\alpha\delta t}|\sigma|^{-2} + O(|\sigma|^{-3}) & \text{as $|\sigma| \to \infty$ if $\Delta T/\Delta t \geq 2$.} \end{cases} $$
In other words, the method reduces high and low frequency error modes  very quickly, and the overall contraction
rate is dominated by mid frequencies (where ``mid'' depends on $\alpha$, $\Delta t$, etc). This is also visible in Figure \ref{fig:muhsal}, where $\rho$ attains its maximum at $|\sigma| = O(1/\sqrt{\alpha})$ and decays quickly for both large and small $|\sigma|$.\\
\end{enumerate}

Finally, we note that for the linear problem, it is possible to use Krylov acceleration to solve \FK{for the fixed point of}
\eqref{eq:scheme}, even when the spectral radius is greater than 1.
\FK{However, the goal of this linear analysis is to use it as a tool for studying the asymptotic behaviour of the \emph{nonlinear} method \eqref{OurMethod}; since a contractive fixed point map must have a
Jacobian with spectral radius less than 1 at the fixed point, Theorem \ref{lin_conv_thm} shows
which conditions are sufficient to ensure asymptotic convergence of the nonlinear ParaOpt method.}

\section{Numerical results}\label{Sec4}
\JS{In the previous section, we have presented numerical examples related to the
efficiency of our bounds with respect to $\sigma$ and $\alpha$. We now
study in more detail the quality of our bounds with respect to the
discretization parameters. We complete these experiments with a
nonlinear example and a PDE example.}

\subsection{Linear scalar \MG{ODE}: sensitivity with respect to the discretization parameters}
In this part, we consider the case where $\alpha=1$, $\sigma=-16$ and $T=1$ and investigate the dependence of the spectral radius of $I-A_{\Delta t}^{-1}A_{\delta t}$ when $L$, $\Delta t$, $\MG{\delta} t$ vary.

We start with variations in $\Delta t$ and $\delta t$, and a fix\MG{ed} number of sub-intervals $L=10$.
In this way, \MG{we} compute the spectral radius of $I-A_{\Delta t}^{-1}A_{\delta t}$ for three cases: first with a fixed $\Delta t=10^{-4}$ and $\delta t=\frac{\Delta t}{2^k}$, $k=1,\MG{\ldots},15$\MG{;} then  with a fixed $\delta t=10^{-2}\cdot 2^{-20}$ and $\Delta t=2^{-k}$, $k=0,\MG{\ldots},20$; and finally with a fixed ratio $\frac{\delta t}{\Delta t}=10^{-2}$ with $
\Delta t= 2^k$, $k=1,\MG{\ldots},15$. The results are shown in Fig\MG{ure}~\ref{figdtDtDt}.
\begin{figure}
\centering
\includegraphics[width=0.48\textwidth]{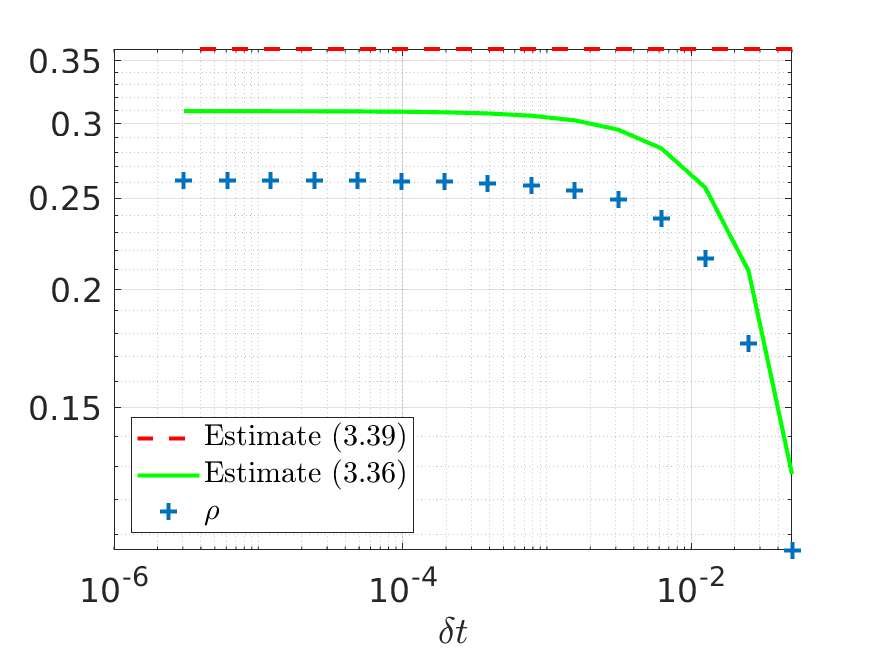}
\includegraphics[width=0.48\textwidth]{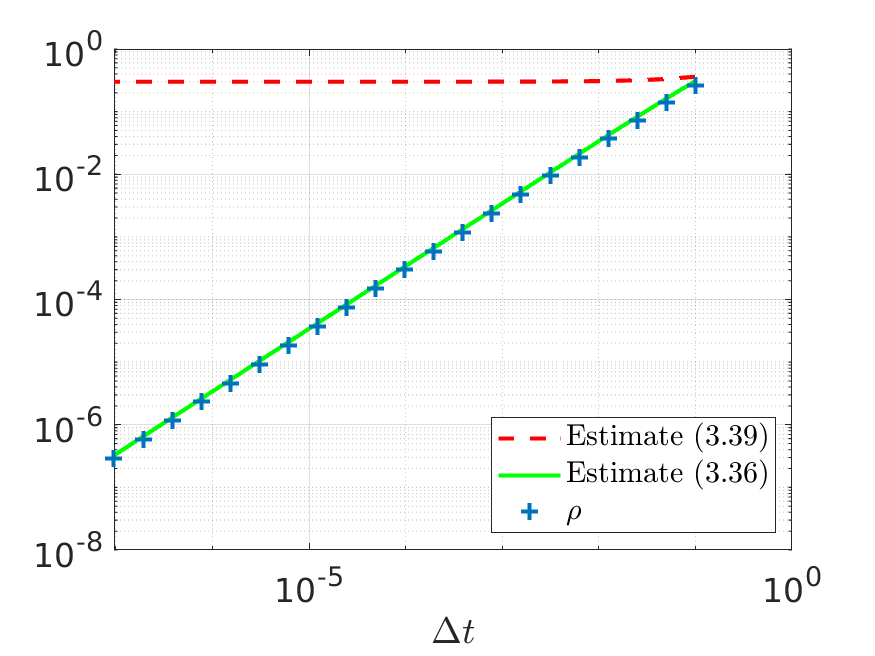}\\
\includegraphics[width=0.48\textwidth]{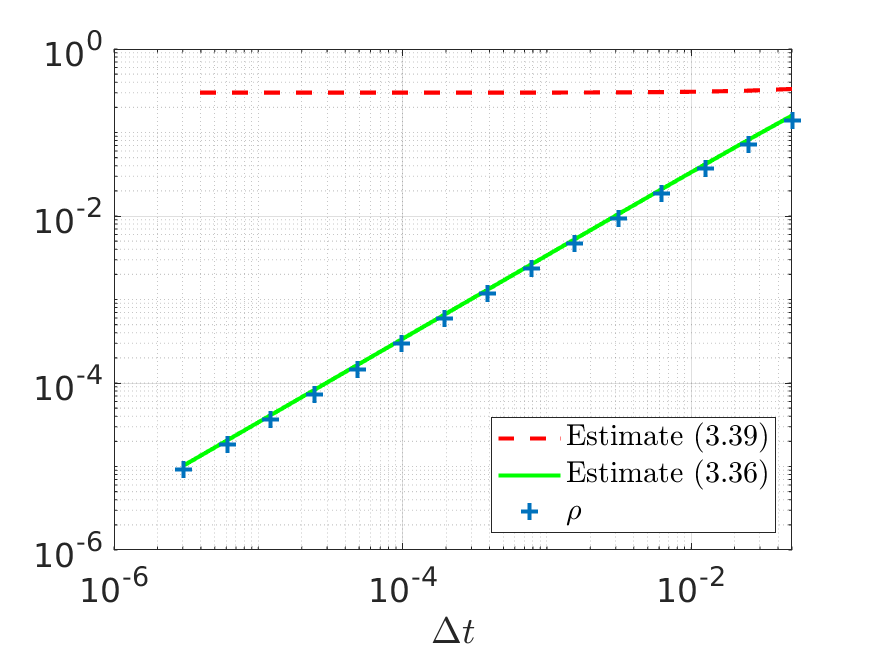}
\caption{Spectral radius of \MG{the} preconditioned matrix. 
 Top left:  varying $\delta t$ (with fixed $\Delta t$), top right: varying $\Delta t$  (with fixed $\delta t$), bottom: varying $\Delta t$ (with fixed $\frac{\delta t}{\Delta t}$).  \label{figdtDtDt}}
\end{figure}
In all cases, we observe a very good agreement between the estimate obtained in~\eqref{mu_bound} and the true spectral radius. \FFK{Note that the largest possible $\Delta t$ for this problem is when
$\Delta t$ equals the length of the sub-interval, i.e., when $\Delta t = \Delta T = 0.1$\JJJS{. For this $\Delta t$},  the estimates (3.36) and (3.39) are very close to each other, because (3.39) is obtained from (3.36) by 
making $\Delta t$ as large as possible, i.e., by letting $\Delta t = \Delta T$.}


We \MG{next} study the scalability properties of \MMG{ParaOpt}. More
precisely, we examine the behaviour of the spectral radius of \MG{the}
preconditioned matrix when the number of subintervals $L$ varies. In
order to fit with the paradigm of numerical efficiency, we set $\Delta
T=\Delta t$ which corresponds somehow to a coarsening limit. We
consider two cases: \FK{the first case uses} a fixed value of $T$, namely $T=1$, and
the second case uses $T=L\Delta T$ \FK{ for the fixed value of $\Delta T = 1$}. The results are shown in Fig\MG{ure}~\ref{scal}.
\begin{figure}
  \centering
\includegraphics[width=0.48\textwidth]{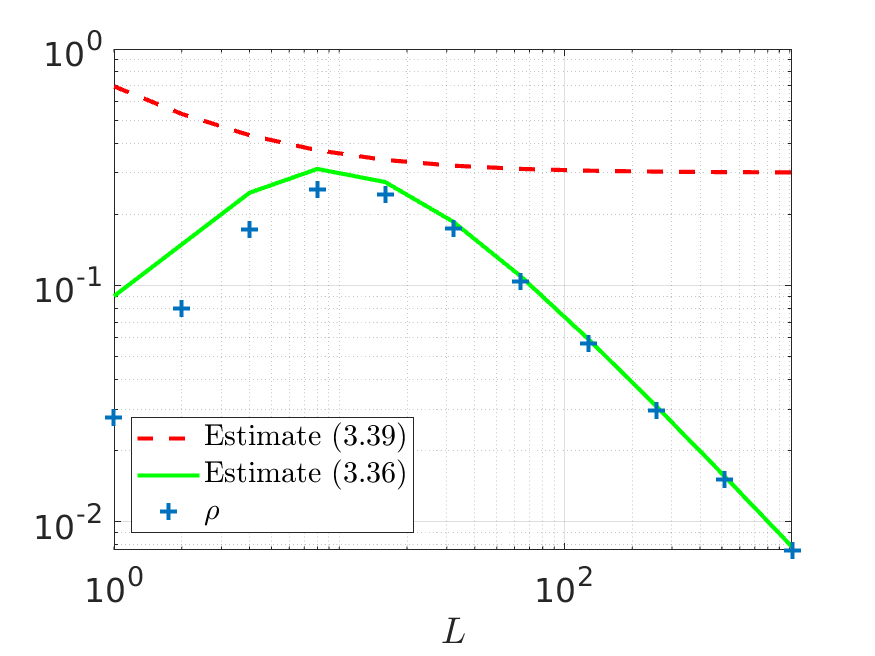}
\includegraphics[width=0.48\textwidth]{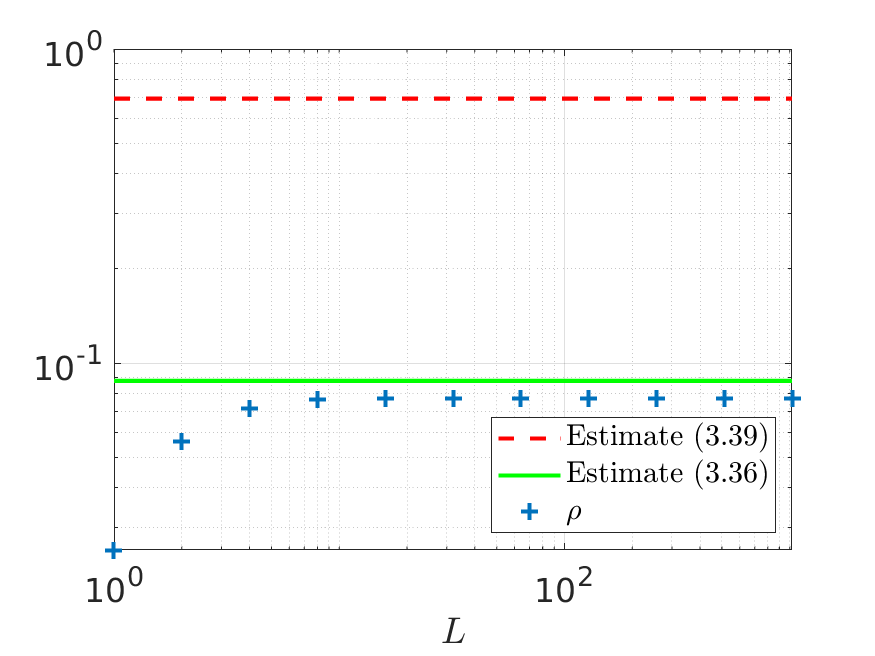}
\caption{Spectral radius of \MG{the} preconditioned matrix as a function of $L$. 
Left: fixed value of $T$ (with $T=1$), Right: $T=L\Delta T$.}\label{scal}
\end{figure}
In both cases, we observe perfect scalability of \MMG{ParaOpt}, in the sense that the spectral radius is uniformly bounded with respect to the number of subintervals considered in the time parallelization.


\subsection{A nonlinear example}\label{lotka_voltera_section}

\MG{We now consider} a control problem \MG{associated} with a nonlinear vectorial dynamics, namely the \emph{Lotka-Volterra} system.
The problem consists \FFK{of} minimizing the cost functional$$ J(c) = \frac{1}{2}|y(T)-y_{{target}}|^2 + \frac\alpha 2 \int_0^{T} |c(t)|^2\,dt$$
with $y_{{target}}=(100,20)^T$, subject to the Lotka-Volterra equation\MG{s}
\begin{equation}\label{eq:lotka_volterra}
\begin{aligned}
\dot{y}_1 &= g(y):=a_1y_1 - b_1y_1y_2+c_1,\\
\dot{y}_2 &= \widetilde{g}(y):= a_2y_1y_2 - b_2y_2+c_2
\end{aligned}
\end{equation}
\JJS{with $a_1=b_2=10$, $b_1=a_2=0.2$ and} initial conditions $y(0) = (20,10)^T$. In this nonlinear setting, the computation of each component of ${\cal F}(Y,\Lambda)$ for given $Y$ and $\Lambda$ requires a series of independent iterative inner loops. In our test, these computations are carried out using a Newton method. As in Section~\ref{Sec3}, the time discretization of~\eqref{BVP} is performed with an implicit Euler scheme.

\JJJS{In a first test, we set $T=\MMG{1/3}$ and $ \alpha=5\FK{\times}
  10^{-2}$ and fix the fine time discretization step \MG{to} $\delta t
  = \frac T {N_0}$, with $N_0=12\cdot 10^{-5}$.}
\MMG{\FK{In Fig\MG{ure}~\ref{VL_cv}, we show}
the rate of convergence of \MMG{ParaOpt} for \JJS{$L=10$} and various values of the ratio $r= \frac{\delta t}{\Delta t}$. \FFK{Here, the error is defined
as the maximum difference between the interface state and adjoint values obtained from a converged fine-grid solution, and the interface values obtained at each inexact Newton iteration by \MMG{ParaOpt}.}}
\begin{figure}
\centering
\includegraphics[width=.7\linewidth]{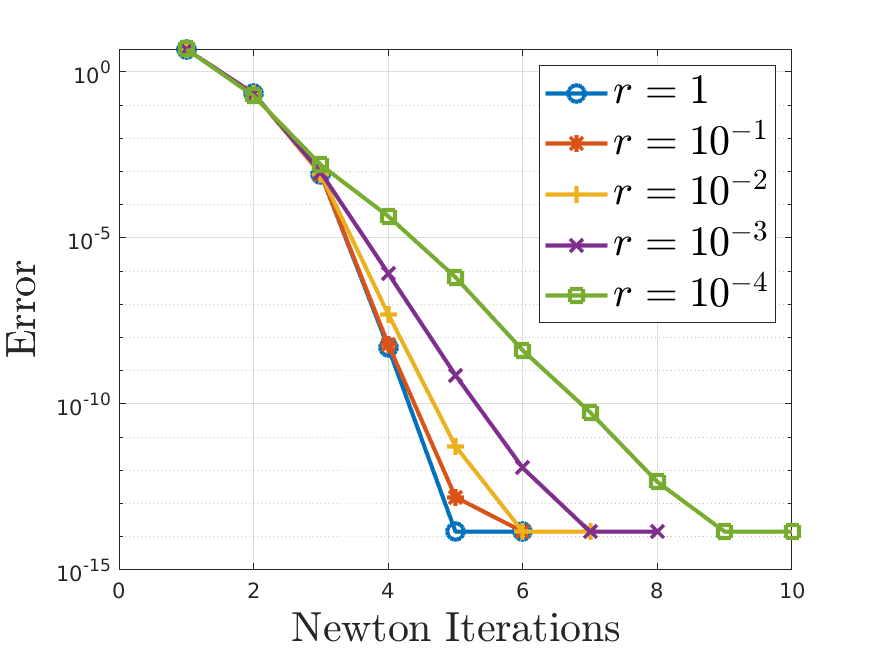}
\caption{\FFK{$L^\infty$ error as a function of the number of exact ($r=1$) or inexact ($r < 1$) Newton iterations}, for various values of the ratio $r= \frac{\delta t}{\Delta t}$.}\label{VL_cv}
\end{figure}
\MG{As can be expected when using a Newton method, we observe that quadratic convergence is obtained in the case $r=1$. \MG{When $r$ becomes smaller,}
the \FK{preconditioning} becomes \FK{a coarser approximation of the exact Jacobian}, \MG{and thus convergence \MMG{becomes a bit slower}}.
}

In our experiments, we \MMG{observed} that the initial \FFK{guess} 
play\FFK{s} a significant role in the convergence of the method. This follows from the fact that \MMG{ParaOpt} \MG{is} an exact (if $\Delta t=\delta t$) or approximate (otherwise) Newton method. The initial 
\FFK{guess} we consider \FFK{is} $c(t)=1$, $y(T_\ell)= (1-T_\ell/T) y_0 +T_\ell/Ty_{target}$, \FFK{and
$\lambda(T_\ell) = (1,1)^T$.}
\MMG{While for $T=1/3$ we observe convergence for all $L$, if we
  increase $T$ to $T=1$,} we do not observe convergence \MMG{any more}
for $L<10$; \FFK{in fact, without decomposing the time domain, the
  sequential version of our solver with $L=1$ does not converge,
  even if we use the exact Jacobian without the coarse
  approximation. This shows that using a}
time-domain decomposition actually helps in solving the nonlinear
problem\MG{, a phenomenon already} observed for a different
time-parallelization method in~\cite{RSSG}.
\FK{These} convergence \FK{problems we observed} are also related to the existence of multiple solutions.
\FK{Indeed, if we coarsen} the outer iteration by replacing the Newton iteration with a Gauss-Newton iteration, i.e., \FK{by} \MG{removing} the second order derivatives of $g$ and $\widetilde{g}$ in Newton's iterative formula, \FK{we obtain} another solution, as illustrated in Fig\MG{ure}~\ref{Limits} \JJJS{on the left for $T=1$ and $r=1$}. \MMG{For both solutions}, we observe that the eigenvalues associated with the linearized dynamics
$$
\delta \dot{y}_1 = a_1\delta y_1 - b_1\delta y_1y_2- b_1y_1\delta y_2+\delta c_1,\quad \delta\dot{y}_2 = a_2\delta y_1y_2 + a_2y_1\delta y_2 - b_2\delta y_2+\delta c_2
$$
\MG{in a neighborhood of the local minima}
remain strictly positive along the trajectories, \MG{in contrast} to the situation analyzed in Section~\ref{Sec3}.  Their values are presented in Fig\MG{ure}~\ref{Limits} \MMG{on the right}.
\begin{figure}
\centering
\includegraphics[width=.48\linewidth]{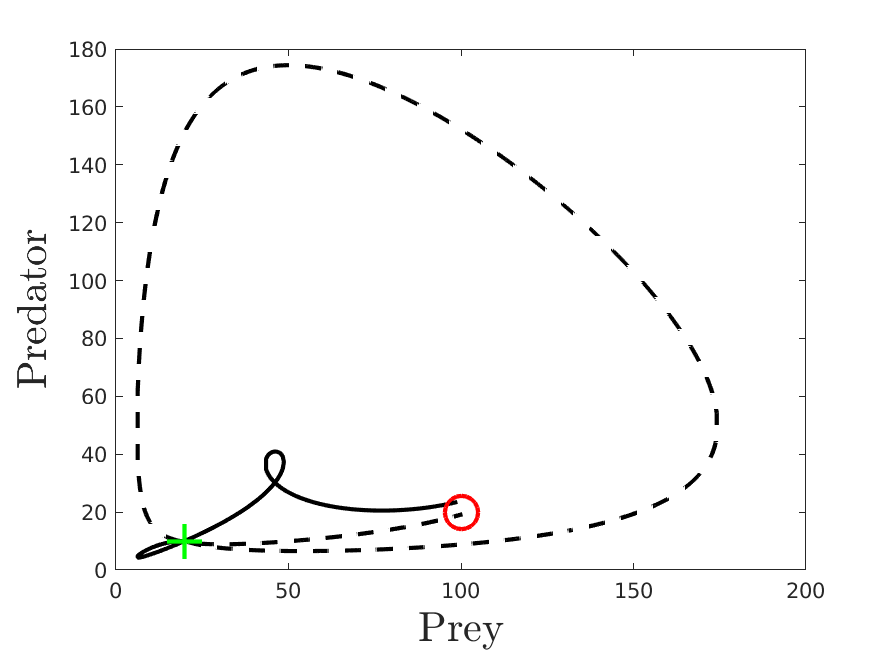}
\includegraphics[width=.48\linewidth]{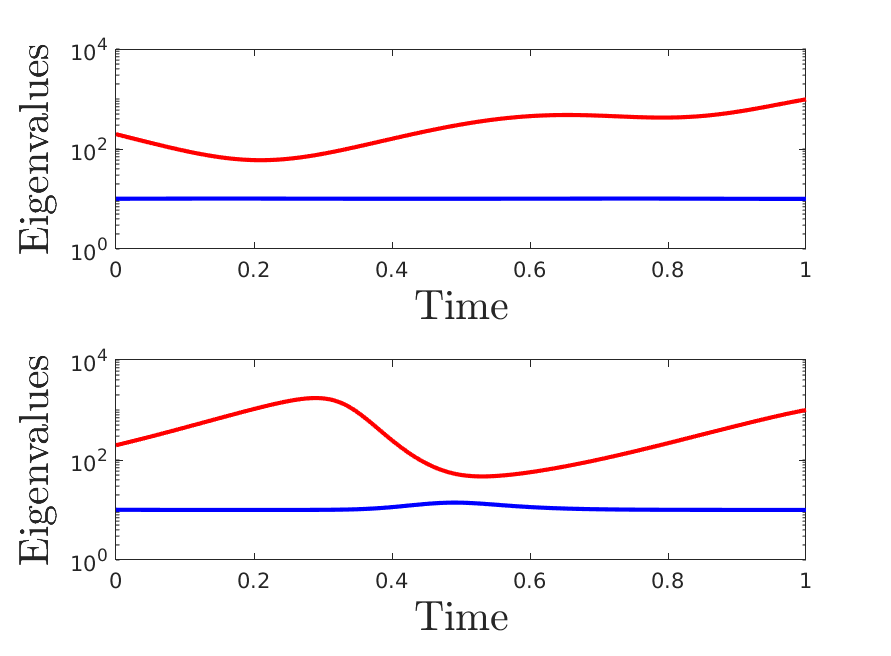}
\caption{Left: two local minima of the cost functional $J$, obtained with Newton (plain line) and Gauss-Newton (dashed line) in the outer loop, for $T=1$. The cost functional values \JJJS{are $J\approx 1064.84$ and $J\approx 15.74$}. The green cross and the red circle indicate $y_0$ and $y_{target}$. Right: \JJJS{(real)} eigenvalues associated with the linearized dynamics in a \FK{neighborhood} of the local minima  obtained with Newton (top) and Gauss-Newton (bottom).}\label{Limits}
\end{figure}


\JJS{We \MMG{next} test the numerical efficiency of our
  algorithm.
\JJJS{The example we consider}
  corresponds to the last curve of Figure~\ref{VL_cv}, i.e. $T=1/3$
  and $r=10^{-4}$, except that we use various values of
  $L\JJJS{\in\{1,3,6,12,24\}}$ \MMG{using the corresponding number of
    processors}.  We execute our code in parallel on workers of
  \MMG{a} parallel pool, using \FFK{{\sc Matlab}'s Parallel Processing
    Toolbox on a 24-core machine that is part of the SciBlade cluster
    at Hong Kong Baptist University.}  The results are presented in
  Table~\ref{tableLV}, where we also indicate the total parallel
  comput\MMG{ing} time without communication,
  \FFK{as well as the number of outer Newton iterations required for
    convergence to a tolerance of $10^{-13}$.}}
\begin{table}	
\centering
\caption{\JJJS{Performance of ParaOpt: total computing time
    $T_{cpu}$, parallel computing time only in seconds and speedup ($T_{cpu}(L=1)/T_{cpu}(L)$)}.
\label{tableLV}}
{\small
\begin{tabular}{|c||c|c|c|c|}
\hline
$L$ & Newton Its. & $T_{cpu}$ & Parallel computing time & speedup  \\
\hline
1 & 14 & 777.53 & 777.42&  1.00 \\
3 & 10 & 172.13 & 167.36&  4.52 \\
6 &  9 &  82.10 & 79.67 &  9.47 \\
12 & 9 &  43.31 & 42.49 & 17.95 \\
24 & 9 &  25.75 & 24.74 & 30.20 \\
\hline
\end{tabular}
}
\end{table}
\JJS{We observe that our cluster enables us to get \MMG{very good scalability, the total computing time is roughly divided by two when the number of processors is doubled.}}
\JJJS{Though not reported in the table, we have observed that even in the case $L=1$, i.e., without parallelization, ParaOpt outperforms the Newton method ($777.53 \ s$ vs. $865.76\ s$ in our test).}

\FFK{To see how this compares with speedup ratios that can be expected from more classical
approaches, we run parareal on the initial value problem \eqref{eq:lotka_volterra} with the same
initial conditions and no control, i.e., $c_1=c_2=0$. For $L=3, 6, 12$ and 24 sub-intervals and
a tolerance of $10^{-13}$, parareal requires $K=3, 6, 8$ and 13 iterations to converge. (For a more
generous tolerance of $10^{-8}$, parareal requires $K=3, 6, 6$ and 7 iterations.) Since the speedup
obtained by parareal cannot exceed $L/K$, the maximum speedup that can be obtained if parareal is used as a subroutine for forward and backward sweeps does not exceed 4 for our problem. 
}\JJJS{Note that this result is specific to the non-diffusive character of the considered equation. This speedup  
would change if the constraint type changed to parabolic, see~\cite[chap. 5]{Nielsen}.}
\subsection{A PDE example}
We finally consider \FK{a control problem involving the} heat equation. More precisely,  Eq.~\eqref{eq:dynamic} is replaced by
$$
\partial_t y - \Delta y=  B c,
$$
where the unknown $y=y(x,t)$ is defined on $\Omega=[0,1]$ with
periodic boundary conditions\FK{, and on}  $[0,T]$ with $T=10^{-2}$. \JJS{Initial and target states are
\begin{align*}
y_{init}	=& \exp(-100(x-1/2)^2),\\
y_{target} =& \frac 12\exp(-100(x-1/4)^2) + \frac 12\exp(-100(x-3/4)^2).
\end{align*}
}
The operator $B$ is the indicator function of a sub-interval $\Omega_c$ of
$\Omega$; in our case, $\Omega_c=[1/3,2/3]$. We \FK{also} set
$\alpha=10^{-4}$. The corresponding solution is shown in
Fig\MG{ure}~\ref{heatcon}.
\begin{figure}
\centering
\includegraphics[width=.7\linewidth]{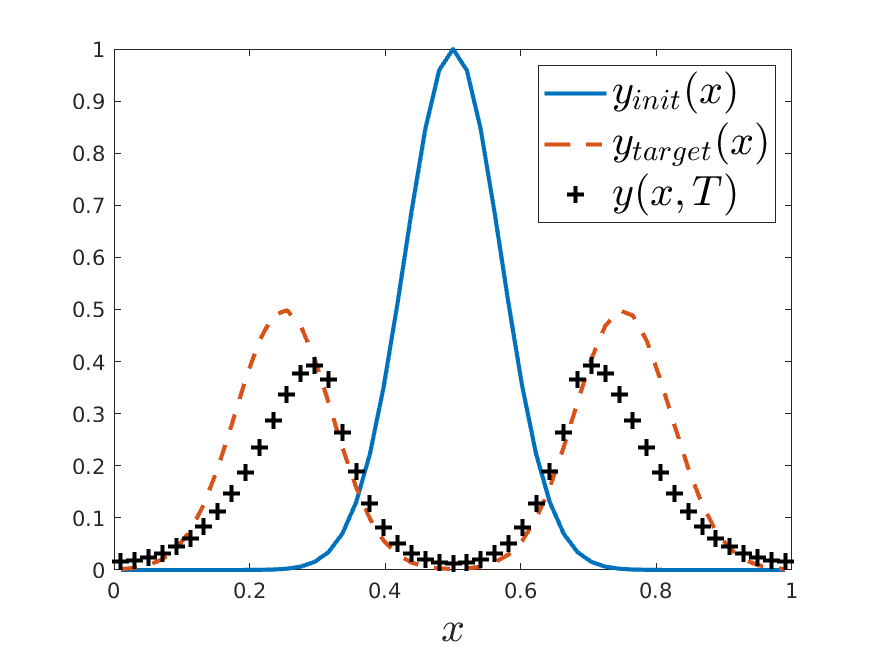}
\caption{Example of initial condition, target state and final state of the solution of the control problem.}\label{heatcon}
\end{figure}
\MG{We use a finite difference scheme \FK{with $50$ grid points} for the \FK{spatial} discretization}.
As in the \MG{previous sub}section, an implicit Euler
scheme is used for the time discretization, and we consider a
parallelization involving $L=10$ subintervals, with $\delta t=10^{-7}$ and $\delta t=10^{-9}$ \JJJS{so that the rate of convergence of the method can be tested for various values of $r=\frac{\delta t}{\Delta t}$.}  
 For $\alpha=10^{-4}$, the evolution of the error along the iterations is shown \MG{in} Fig\MG{ure}~\ref{CVheat}. \FFK{Here, the error is defined as the maximum difference between
 the iterates and the reference discrete solution, evaluated at sub-interval interfaces.}
\begin{figure}
\centering
\includegraphics[width=.46\linewidth]{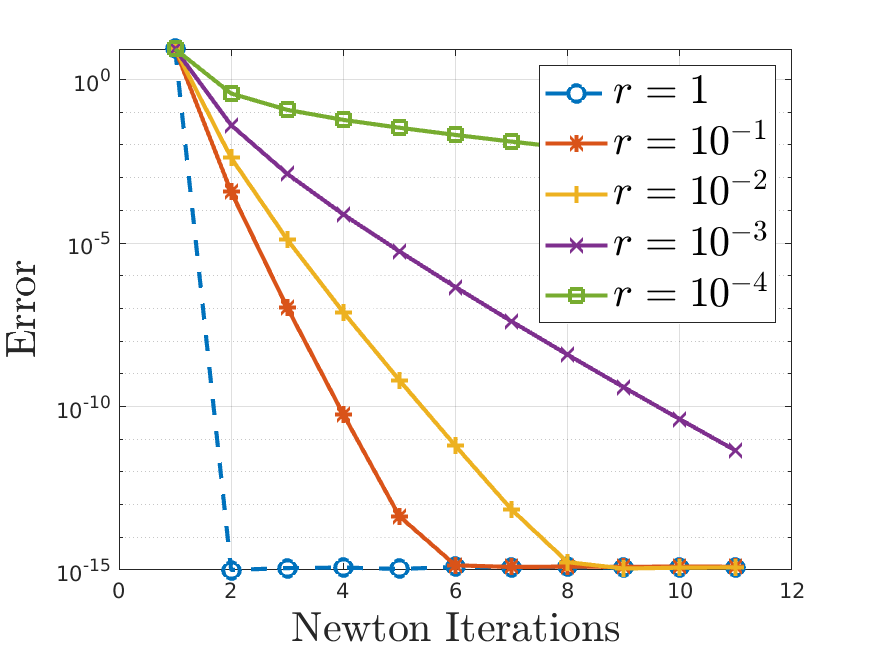}
\includegraphics[width=.46\linewidth]{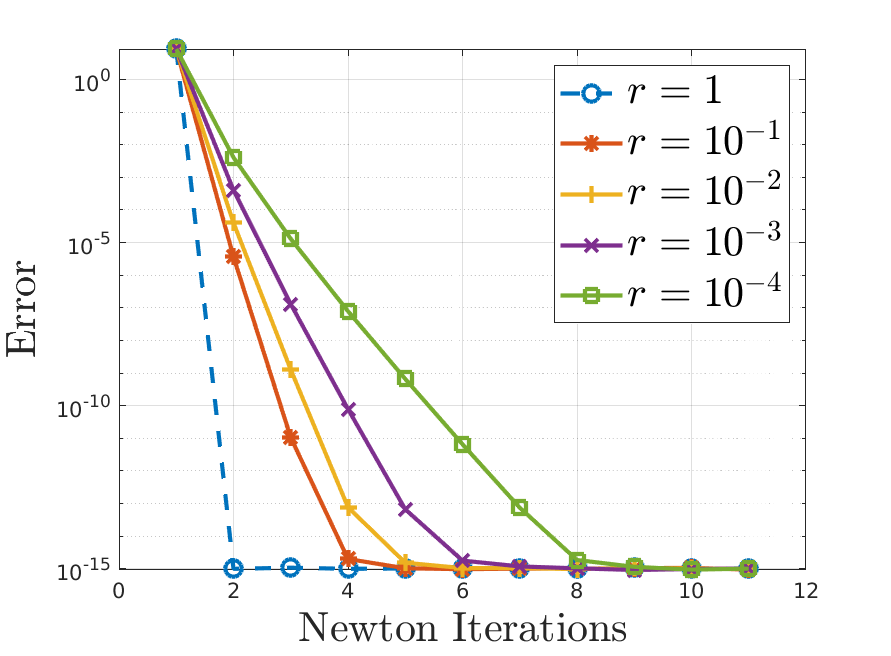}
\caption{\JJJS{Convergence of the method for various values of the ratio $r= \frac{\delta t}{\Delta t}$. Left:  $\delta t=10^{-7}$, right: $\delta t=10^{-9}$. 
}}\label{CVheat}
\end{figure}
\JJJS{Observe also that the convergence curves corresponding to $r=10^{-1}$ and $r=10^{-2}$ on the left panel look nearly identical to the curves for $r=10^{-3}$ and $r=10^{-4}$ on the right panel. This is because they correspond to the same values of $\Delta t$, namely $\Delta t = 10^{-6}$ and $\Delta t = 10^{-5}$. This behavior is consistent with Theorem~\ref{lin_conv_thm}, where the convergence estimate depends only on $\Delta t$, rather than on the ratio $\frac{\delta t}{\Delta t}$.}
\JS{Cases of divergence can also be observed}, in particular for $T=1$ and small values of $\alpha$ and $r$, as shown in Fig\MG{ure}~\ref{Alphaheat}.

\FFK{Of course, one can envisage using different spatial discretizations for the coarse and fine propagators; this may provide additional speedup, provided suitable restriction and prologation operators are used to communicate between the two discretizations. This will be the subject of investigation in a future paper. }
\begin{figure}
\centering
\includegraphics[width=.48\linewidth]{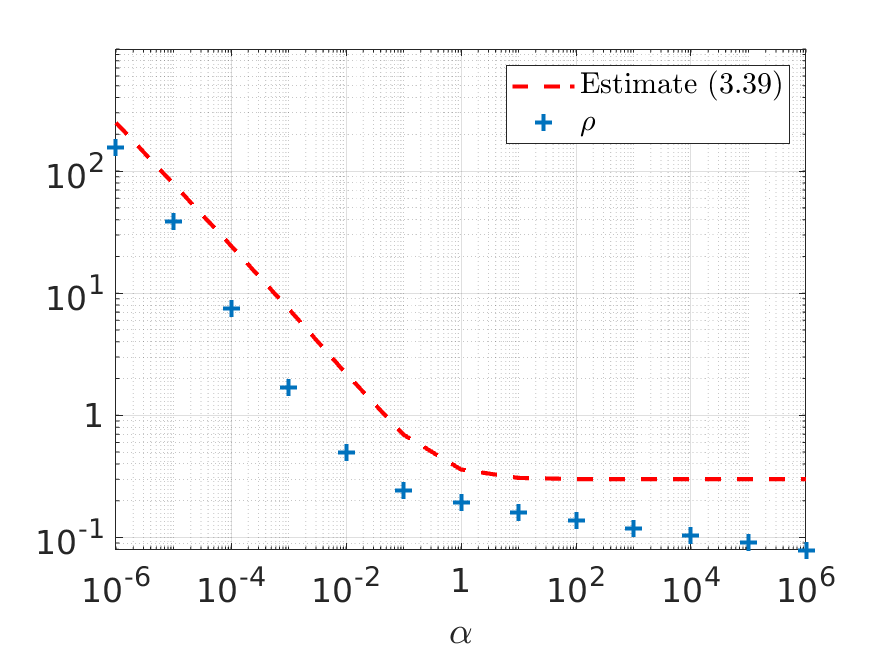}
\includegraphics[width=.48\linewidth]{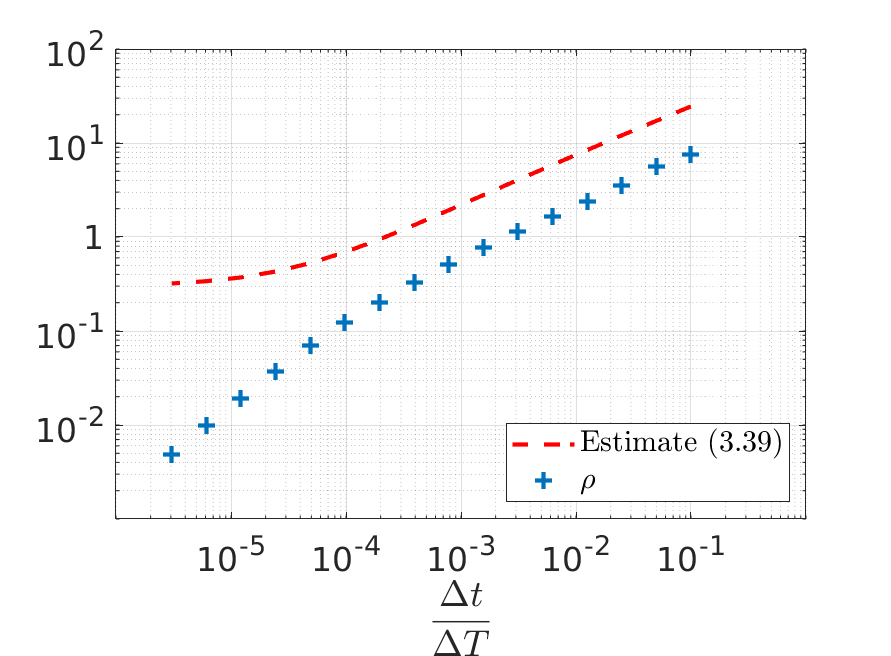}\\
\includegraphics[width=.48\linewidth]{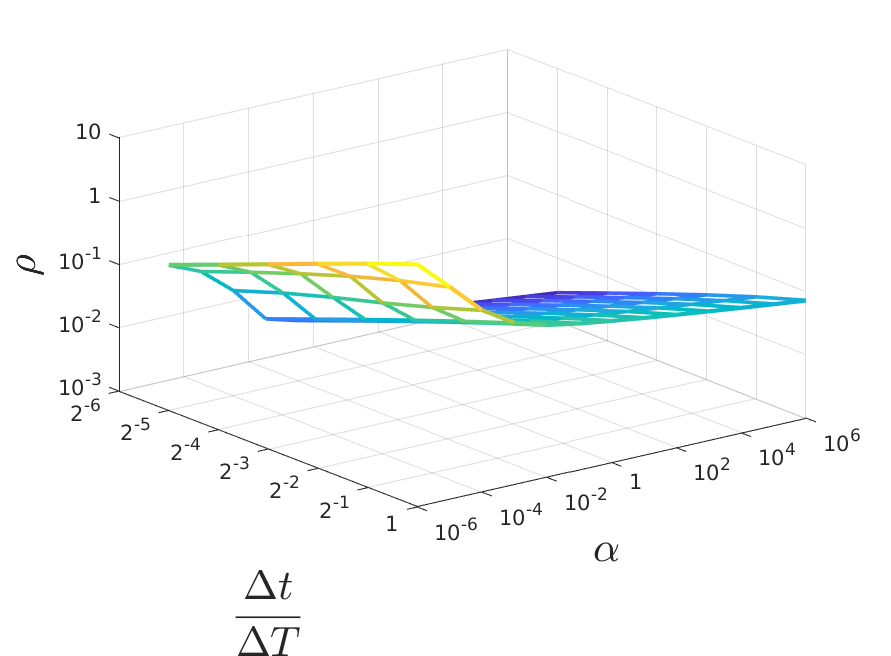}
\includegraphics[width=.48\linewidth]{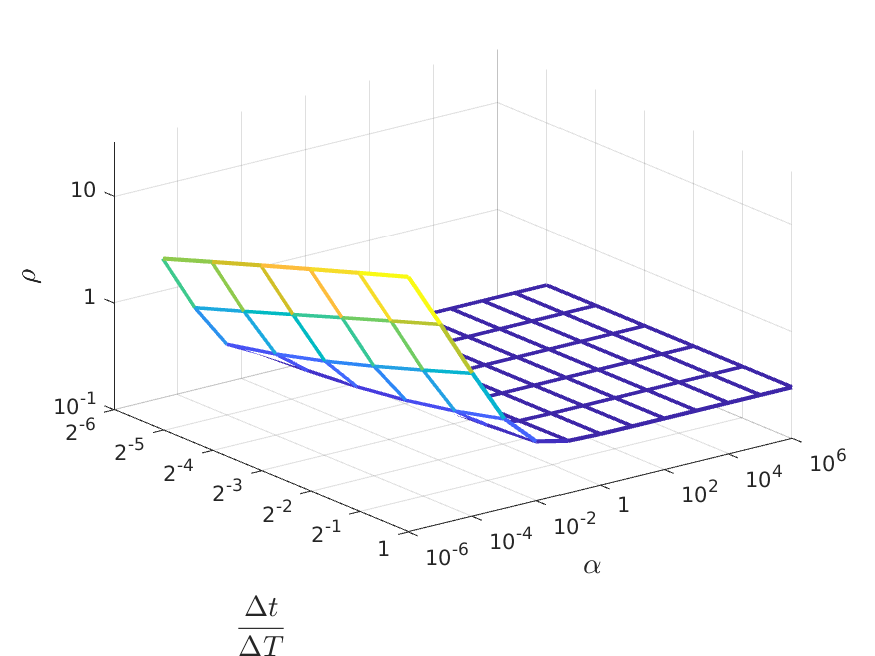}
\caption{Top left: Spectral radius of \MG{the} preconditioned matrix as a function of $\alpha$, with $\delta t=10^{-5}$ and $\Delta t=\Delta T=10^{-1}$.
Top right: Spectral radius of \MG{the}  preconditioned matrix as a function of $\Delta t/\Delta T$, with $\delta t=10^{-8}$ and $\alpha=10^{-4}$.
Bottom left: Spectral radius of \MG{the} preconditioned matrix as a function of $\alpha$ and $\Delta t/\Delta T$, with $\delta t=10^{-7}$.
Bottom right: Estimate~\eqref{eq:spec_radius} as a function of $\alpha$ and $\Delta t/\Delta T$.
}\label{Alphaheat}
\end{figure}

\section{Conclusions}\label{Sec5}

\MG{We introduced a new time\FK{-}parallel algorithm we call ParaOpt
  for time\FK{-}dependent optimal control problems.  \FK{Instead of
    applying Parareal to solve separately the forward and backward
    equations as they appear in an optimization loop, we propose in
    ParaOpt to partition the coupled forward-backward problem directly
    in time, and to use a Parareal-like iteration to incorporate a
    coarse correction when solving this coupled problem.}  We analyzed
  the convergence properties of ParaOpt, \MMG{and} proved \FFK{in the
    linear diffusive case that its convergence is independent of the
    number of sub-intervals in time, \MMG{and thus scalable}. We also}
  tested \MMG{ParaOpt} on \FK{scalar} linear optimal control problems,
  a nonlinear \JJJS{non-diffusive} optimal control problem involving the Lotka-Volterra
  system, and also on a control problem governed by the heat equation.
  \MMG{A small scale parallel implementation of the Lotka-Volterra
    case also showed scalability of ParaOpt for this nonlinear problem.}

  \FFK{Our ongoing work consists of analyzing the algorithm for non-diffusive
  problems. Also, for problems with large state spaces, e.g., for discretized
  PDEs in three spatial dimensions, the approximate Jacobian $\mathcal{J}^G$ in 
  \eqref{approxJacobian} may become too large to solve by direct methods. 
  \JJJS{Thus, we are currently} working on designing efficient preconditioners for solving such systems
  iteratively. Finally, we are currently studying ParaOpt by applying it to realistic problems 
  from applications, in order to better understand its behaviour in such complex cases. }

}

\section*{Acknowledgments}
The authors acknowledge support from ANR Cin\'e-Para (ANR-15-CE23-0019), ANR/RGC ALLOWAPP (ANR-19-CE46-0013/A-HKBU203/19), \FFK{the Swiss National Science Foundation grant no.~200020\_178752,} and the Hong Kong Research Grants Council (ECS 22300115 and GRF 12301817). \FFK{We also thank the anonymous
referees for their valuable suggestions, which greatly improved our paper.}

\appendix

\section{Proof of Inequality \eqref{technical_estimate}}\label{appendix}
Our goal is to prove the following lemma, which is needed for the
proof of Lemma \ref{constant_C}.
\begin{lemma} For every $k>0$ and $0 < x \leq k$, we have \label{lemA.1}
\begin{equation}\label{app:exp}
 (1+x)^{k/x} > k\left(\frac{2+x}{1+x}\right)-1.
\end{equation}
\end{lemma}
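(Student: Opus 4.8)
The plan is to bound the left-hand side from below by a quadratic polynomial in $k$, and then reduce the claim to an elementary rational inequality that can be settled by a concavity argument. First I would record the second-order Bernoulli inequality: for any exponent $r \geq 1$ and any $t \geq 0$ one has $(1+t)^r \geq 1 + rt + \tfrac{r(r-1)}{2}t^2$. This follows by setting $\psi(t) = (1+t)^r - 1 - rt - \tfrac{r(r-1)}{2}t^2$, noting $\psi(0)=0$, and checking that $\psi'(t) = r[(1+t)^{r-1} - 1 - (r-1)t] \geq 0$ by the ordinary Bernoulli inequality (valid since $r - 1 \geq 0$). Applying this with $r = k/x$ (which satisfies $r \geq 1$ precisely because $0 < x \leq k$) and $t = x$ gives the key lower bound
\[
(1+x)^{k/x} \;\geq\; 1 + k + \frac{k(k-x)}{2}.
\]

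With this in hand, since $k\frac{2+x}{1+x} = k + \frac{k}{1+x}$, the desired inequality \eqref{app:exp} would follow once we establish the strict rational inequality
\[
2 + \frac{k(k-x)}{2} \;>\; \frac{k}{1+x}, \qquad 0 < x \leq k.
\]
To prove this, I would introduce $G(x) := 2 + \tfrac{k(k-x)}{2} - \tfrac{k}{1+x}$ and observe that $G''(x) = -2k/(1+x)^3 < 0$, so $G$ is concave on $[0,k]$. It then suffices to check positivity at the two endpoints: $G(0) = \tfrac12(k^2 - 2k + 4) > 0$ (the quadratic $k^2-2k+4$ has negative discriminant), and $G(k) = 2 - \tfrac{k}{1+k} > 1 > 0$. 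By concavity $G$ lies above the chord joining these endpoint values, hence $G(x) > 0$ on all of $[0,k]$, which is exactly what is needed. Strictness of \eqref{app:exp} is preserved even at $x = k$, where the Bernoulli bound becomes an equality, because the rational inequality $G > 0$ is itself strict.

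The main obstacle I anticipate is choosing a lower bound for $(1+x)^{k/x}$ that is simultaneously sharp enough and tractable. The tempting route of using $\ln(1+x) > x/(1+x)$ (Lemma \ref{log}) to get $(1+x)^{k/x} > e^{k/(1+x)}$ is too weak near $x = k$ for large $k$: there the exponent $k/(1+x)$ stays bounded while the right-hand side of \eqref{app:exp} grows linearly in $k$, so this estimate fails. The quadratic Bernoulli bound is what repairs the argument, since it reproduces the correct linear-in-$k$ growth of both sides near $x = k$ while remaining a polynomial that pairs naturally with the rational right-hand side; the concavity of $G$ then collapses the whole problem to two elementary endpoint checks.
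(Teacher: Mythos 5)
Your reduction of \eqref{app:exp} to the rational inequality $2 + \tfrac{k(k-x)}{2} > \tfrac{k}{1+x}$ is correct, and your concavity-plus-endpoints argument for $G$ is fine on all of $[0,k]$. The genuine gap is the ``second-order Bernoulli inequality'' on which the whole chain rests: $(1+t)^r \geq 1 + rt + \tfrac{r(r-1)}{2}t^2$ is true for $r\geq 2$ but \emph{strictly reversed} for $1<r<2$ and $t>0$. Indeed, Taylor's theorem gives $(1+t)^r = 1 + rt + \tfrac{r(r-1)}{2}(1+\xi)^{r-2}t^2$ for some $\xi\in(0,t)$, and $(1+\xi)^{r-2}<1$ when $r<2$. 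Concretely, take $k=3/2$, $x=1$, so $r=k/x=3/2$: your bound claims $2^{3/2}\geq 1+k+\tfrac{k(k-x)}{2}=2.875$, but $2^{3/2}\approx 2.828$. The flaw in your derivation is the appeal to ``ordinary Bernoulli'' with exponent $r-1$: the inequality $(1+t)^s\geq 1+st$ requires $s\geq 1$ (or $s\leq 0$) and reverses for $0<s<1$, which is exactly the regime $r-1\in(0,1)$, i.e.\ $k/2<x<k$ --- a region your hypothesis $0<x\leq k$ does not exclude. So the first link of your chain breaks on $(k/2,k)$, even though the lemma itself remains true there.

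The gap is local and repairable by a case split. For $0<x\leq k/2$ you have $r=k/x\geq 2$, your quadratic bound is then legitimate, and your inequality $G>0$ finishes the proof as you wrote it. For $k/2<x\leq k$, use only first-order Bernoulli (valid since $k/x\geq 1$) to get $(1+x)^{k/x}\geq 1+k$; it then suffices to check $1+k > k+\tfrac{k}{1+x}-1$, i.e.\ $\tfrac{k}{1+x}<2$, which holds because $x>k/2$ gives $\tfrac{k}{1+x}<\tfrac{2k}{2+k}<2$. With that patch your argument becomes a correct proof, and a genuinely different one from the paper's: the paper splits into the cases $k\leq 1$ and $k>1$, and for $k>1$ proves $g(k)>h(k)$ together with $g'-h'<0$ on $(0,k)$ using the logarithmic estimate of Lemma \ref{log}, whereas your (repaired) route needs no calculus beyond convexity and reduces everything to endpoint checks of a concave function.
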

First, we need the following property of logarithmic functions.
\begin{lemma} \label{log} For any $x > 0$, we have
$$ \ln(1+x) \geq \frac{x}{x+1} + \frac12\left(\frac{x}{x+1}\right)^2. $$
\end{lemma}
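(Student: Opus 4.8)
The plan is to reduce the stated inequality to a well-known power-series estimate through a single substitution. Setting $u = \frac{x}{x+1}$, which maps $x\in(0,\infty)$ bijectively onto $u\in(0,1)$, we have $1+x = \frac{1}{1-u}$ and hence $\ln(1+x) = -\ln(1-u)$. The claimed bound then becomes
$$ -\ln(1-u) \geq u + \tfrac12 u^2, $$
which I would establish by invoking the expansion $-\ln(1-u) = \sum_{n\geq 1} u^n/n$, valid for $0 < u < 1$, and discarding the nonnegative tail $\sum_{n\geq 3} u^n/n \geq 0$. Translating back to $x$ gives exactly the assertion, with strict inequality for $x>0$.

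If a self-contained argument without power series is preferred, I would instead define
$$ g(x) := \ln(1+x) - \frac{x}{x+1} - \frac12\left(\frac{x}{x+1}\right)^2, $$
observe that $g(0) = 0$, and differentiate. Writing $u = x/(x+1)$ with $u' = 1/(x+1)^2$ and using $g'(x) = \frac{1}{1+x} - u'(1+u)$, a short calculation yields
$$ g'(x) = \frac{1}{1+x} - \frac{2x+1}{(x+1)^3} = \frac{(x+1)^2 - (2x+1)}{(x+1)^3} = \frac{x^2}{(x+1)^3} \geq 0, $$
so $g$ is nondecreasing on $[0,\infty)$ and therefore $g(x) \geq g(0) = 0$ for all $x \geq 0$, strictly for $x>0$.

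There is no serious obstacle here; the only mild subtlety is carrying out the differentiation cleanly, since the right-hand side is a composition through $u = x/(x+1)$, but the bookkeeping collapses immediately to the manifestly nonnegative numerator $x^2$. I would favour the substitution proof for brevity and leave the derivative computation as a remark for readers who wish to avoid series. Either way, the lemma then feeds directly into the estimate $\ln(1+x) > \frac{x}{1+x}$ used in the proof of Lemma~\ref{lem:dg_over_g}.
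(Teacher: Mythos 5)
Your favoured substitution proof is exactly the paper's argument: set $u = x/(x+1)$, rewrite $\ln(1+x) = -\ln(1-u)$, expand in a power series, and drop the nonnegative tail $\sum_{n\geq 3} u^n/n$. The alternative derivative computation you include is also correct (the numerator does collapse to $x^2$), but it is not needed; the main route matches the paper's proof verbatim.
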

\begin{proof}
Let $u = \frac{x}{x+1} < 1$. Then
\begin{align*}
 \ln(1+x) &= -\ln\left(\frac{1}{1+x}\right) = -\ln(1-u)\\
 &= u + \frac{u^2}{2} + \frac{u^3}{3} + \cdots \geq u + \frac{u^2}{2}.
\end{align*}
The conclusion now follows.
\end{proof}
\begin{proof}(Lemma \ref{lemA.1})
Let $g$ and $h$ denote the left and right hand sides of \eqref{app:exp} respectively. We consider two cases,
namely when $0 < k \leq 1$ and when $k > 1$. When $k \leq 1$, we have
$$ h(x) \leq \frac{2+x}{1+x}-1 = \frac{1}{1+x} < 1 < (1+x)^{k/x} = g(x). $$
For the case $k > 1$, we will show that $g(k) > h(k)$ and
$g'(x)-h'(x) < 0$ for $0 < x < k$, which together imply that $g(x)>h(x)$ for all $0<x\leq k$. The first assertion follows from the fact that
$$ g(k)-h(k) = 1+k - k\cdot \frac{k+2}{k+1} + 1 = 2- \frac{k}{k+1} > 0. $$
To prove the second part, we note that
\begin{align*}
g'(x) &= (1+x)^{k/x}\left[-\frac{k}{x^2}\ln(1+x) + \frac{k}{x(1+x)}\right]\\
&= \frac{-k}{x^2}(1+x)^{k/x - 1}\left[(1+x)\ln(1+x)-x\right] \\
&< \frac{-k}{x^2}(1+x)^{k/x - 1}\cdot\frac{x^2}{2(x+1)} = \frac{-k}{2}(1+x)^{k/x - 2} < 0,\\
h'(x) &= -\frac{k}{(1+x)^2} < 0.
\end{align*}
Therefore, we have
$$ g'(x)-h'(x)
<  -\frac{k}{(1+x)^2}\left[\frac12(1+x)^{k/x}-1\right]\\
\leq  -\frac{k}{(1+x)^2}\underbrace{\left[\frac{1+k}{2}-1\right]}_{\text{$>0$ since $k>1$}} < 0.
$$
Thus, $g(x) > h(x)$ for all $0<x<k$, as required.
\end{proof}


\bibliographystyle{siam}
\bibliography{BiblioGKS}
\end{document}